\newtheorem{thm}{Theorem}[section]
\newtheorem{lem}[thm]{Lemma}
\theoremstyle{remark}
\newtheorem{rem}{\bf Remark}[section]
\theoremstyle{definition}
\numberwithin{equation}{section}
\begin{document}
\title[The Existence of Full Dimensional KAM tori for NLS]{The Existence of Full Dimensional KAM tori for Nonlinear Schr\"odinger equation}

\author[H. Cong]{Hongzi Cong}
\address[H. Cong]{School of Mathematical Sciences, Dalian University of Technology, Dalian 116024, China}
\email{conghongzi@dlut.edu.cn}

%\author[Y. Shi]{Yunfeng Shi}
%\address[Y. Shi] {College of Mathematics,
%Sichuan University,
%Chengdu 610064,
%China}
%\email{yunfengshi18@gmail.com}

\date{\today}

\keywords{Almost periodic solution; Full dimensional tori; NLS equation; KAM Theory}

%%% ----------------------------------------------------------------------

\begin{abstract}
In this paper, we will prove the existence of full dimensional tori for 1-dimensional nonlinear Schr\"odinger equation  with periodic boundary conditions
\begin{equation*}\label{L1}
\mathbf{i}u_t-u_{xx}+V*u+\epsilon|u|^4u=0,\hspace{12pt}x\in\mathbb{T},
\end{equation*}
where $V*$ is the convolution potential. Here the radius of the invariant torus satisfies a slower decay, i.e.
\begin{equation*}\label{031601}
I_n\sim e^{- \ln^{\sigma}|n|},\qquad \mbox{as}\ |n|\rightarrow\infty,
\end{equation*}
for any $\sigma>2$, which improves the result given by Bourgain (J. Funct. Anal. 229 (2005), no.1, 62-94).

\end{abstract}
%%% ----------------------------------------------------------------------

\maketitle

\section{Introduction and main results}
The nonlinear Schr\"{o}dinger
equation is a typical Hamiltonian partial differential equation which has drawn a lot of concerns during the last decades. It is well studied by many authors about the existence and linear stability of invariant tori, long time stability and growth of Sobolev norm of the solutions by some techniques coming from infinite dimensional Hamiltonian system.

An open problem was raised by Kuksin (see Problem 7.1 in \cite{Kuksin2004}):
 \vskip8pt
 {\it Can the full dimensional KAM tori be expected with a suitable decay for Hamiltonian partial differential equations, for example, $$I_n\sim|n|^{-C}$$ with some $ C>0$ as $|n|\rightarrow+\infty$? }

 In 2005, Bourgain \cite{Bourgain2005JFA} constructed the full dimensional tori for the following nonlinear Schr\"{o}dinger
equation \begin{equation}\label{L1}
\mathbf{i}u_t-u_{xx}+V*u+\epsilon|u|^4u=0,\hspace{12pt}x\in\mathbb{T},
\end{equation}where the radius of the invariant torus satisfies
\begin{equation}\label{031701}
I_n\sim e^{-\sqrt{n}},\qquad \mbox{as}\ |n|\rightarrow \infty.
\end{equation}
Our basic motivation is to improve Bourgain's result in the sense that there exist full dimensional tori satisfying \begin{equation}\label{031601}
I_n\sim e^{- \ln^{\sigma}|n|},\qquad \mbox{as}\ |n|\rightarrow\infty,
\end{equation}with any $\sigma>2$ for equation (\ref{L1}), which is closer to the conjecture by Kuksin.

In Fourier modes $(q_n)_{n\in\mathbb{Z}}$,
(\ref{L1}) can be rewritten as
\begin{equation*}\label{L3}
\dot{q}_n=\mathbf{i}\frac{\partial{H}}{\partial\bar{q}_n}
\end{equation*}
with the Hamiltonian
\begin{equation}\label{L4}
H(q,\bar{q})=\sum_{n\in\mathbb{Z}}\left(n^2+V_n\right)|q_n|^2+\epsilon\sum_{n_1-n_2+n_3-n_4+n_5-n_6=0}q_{n_1}\bar{q}_{n_2}q_{n_3}\bar{q}_{n_4}q_{n_5}\bar{q}_{n_6}
\end{equation}
and $(V_n)_{n\in\mathbb{Z}}$ are independently chosen in $[0,1]$.
To state our result, we introduce Diophantine conditions which is firstly given by Bourgain in \cite{Bourgain2005JFA}. For $x\in\mathbb{R}$, denote $\|x\|=\inf\limits_{y\in\mathbb{Z}}|x-y|$. Then we say a vector $V\in[0,1]^{\mathbb{Z}}$ is Diophantine, if there exists a real number $\gamma>0$ such that
\begin{equation}\label{S1}
\left|\left|\sum_{n\in\mathbb{Z}}l_nV_n\right|\right|\geq \gamma\prod_{n\in\mathbb{Z}}\frac{1}{1+l_n^2\langle n\rangle^4},
\end{equation}
\mbox{for any $l\in\mathbb{Z}^{\mathbb{Z}}$ with $0<\#\mathrm{supp}\ l<\infty$}, where
\begin{equation*}
\mathrm{supp}\ l=\left\{n:l_n\neq0\right\}
\end{equation*}
and
\begin{equation*}
\langle n\rangle =\max\{1,|n|\}.
\end{equation*}

Now  our main result is as follows:
\begin{thm}\label{031930}
Given any $\sigma>2$ and a frequency vector $\omega=(\omega_n)_{n\in\mathbb{Z}}$ satisfying the Diophantine condition (\ref{S1}), then there exists a small $\epsilon_*(\sigma,\gamma)>0$ depending on $\sigma$ and $\gamma$ only, and  for any $0<\epsilon<\epsilon_*(\sigma,\gamma)$ there exists $V\in[0,1]^{\mathbb{Z}}$  such that equation (\ref{L1}) has a full dimensional invariant torus $\mathcal{E}$  satisfying:\\
(1). the amplitude $I=(I_n)_{n\in\mathbb{Z}}$ of $\mathcal{E}$ restricted as
\begin{equation}
\frac{1}4e^{-2\ln^{\sigma}\lfloor n\rfloor}\leq |I_n|\leq 4e^{-2\ln^{\sigma}\lfloor n\rfloor},\qquad \forall\ n;
\end{equation}
(2). the frequency on $\mathcal{E}$ prescribed to be $(n^2+\omega_n)_{n\in\mathbb{Z}}$;\\
(3). the invariant torus $\mathcal{E}$ is linearly stable.
\end{thm}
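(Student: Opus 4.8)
The plan is to build the invariant torus $\mathcal{E}$ by a KAM iteration scheme adapted to the full-dimensional (almost periodic) setting, following Bourgain's strategy but with the sharper weight function $w(n) = e^{\ln^\sigma \lfloor n \rfloor}$ in place of $e^{\sqrt{|n|}}$. First I would introduce the Birkhoff-normal-form coordinates: writing $q_n = \sqrt{I_n + J_n}\, e^{\mathbf{i}\theta_n}$ with action-angle-type variables, one rewrites the Hamiltonian $H$ in \eqref{L4} as an integrable part $\sum_n (n^2+\omega_n) I_n$ plus a perturbation, and the task is to find $V = (V_n)$ and a symplectic change of variables conjugating $H$ to a Hamiltonian possessing the torus $\{J_n = 0\}$ with the prescribed frequency $(n^2+\omega_n)$. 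The key quantitative device is a Banach space of analytic functions on an infinite-dimensional (weighted) neighborhood of the torus, with a norm that weights the $q_{n}$ by $w(n)$; the combinatorial structure of the sextic nonlinearity — the constraint $n_1-n_2+n_3-n_4+n_5-n_6 = 0$ — must interact well with this weight, i.e., one needs $w(n_1)w(n_3)w(n_5) \lesssim C^{\#} w(n_2)w(n_4)w(n_6)$ on the constraint surface, and the choice $\sigma > 2$ is precisely what makes the relevant tail sums converge after the iteration.

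The heart of the argument is the iterative lemma: at each KAM step one must solve a homological equation of the form $\{N, F\} + \text{(perturbation)} = \text{(new normal form)}$, where the small divisors are finite integer combinations $\sum_n l_n(n^2+\omega_n) + \sum_n l_n V_n$; here the Diophantine condition \eqref{S1} on $\omega$ (and a parallel measure-theoretic selection of $V$) guarantees these divisors are bounded below by $\gamma \prod_n (1+l_n^2\langle n\rangle^4)^{-1}$, and one checks this product-type lower bound is compatible with the super-exponentially small size of the perturbation at each step. I would set up the iteration with a sequence of shrinking analyticity radii $r_\nu$ in the angle variables, shrinking action-neighborhood sizes, and a rapidly decreasing sequence $\epsilon_\nu$ (doubly exponential) controlling the perturbation size, arranging that the loss of weight incurred at step $\nu$ — governed by how many Fourier modes are "excited" and with what amplitude $\sim e^{-2\ln^\sigma \lfloor n\rfloor}$ — is absorbed by the smallness of $\epsilon$. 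Convergence of the scheme then yields the change of variables, the torus $\mathcal{E}$, item (2) on the frequency, and item (1) on the amplitudes (the factor $\tfrac14$ to $4$ reflecting the total perturbation of the actions summed over all steps); linear stability, item (3), follows because the normal form around $\mathcal{E}$ is diagonal with purely real (indeed integer-plus-$\omega_n$) frequencies, so the linearized flow is a rotation on each mode.

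The main obstacle — and the reason the improvement over $e^{-\sqrt{|n|}}$ is delicate — is controlling the accumulation of the weight losses over infinitely many KAM steps while simultaneously keeping the small-divisor estimates effective. Concretely, each homological step divides by quantities like $\prod_n(1+l_n^2\langle n\rangle^4)$, which grows with both the order of the monomial and the mode indices; to close the estimates one needs the weight $w(n) = e^{\ln^\sigma\lfloor n\rfloor}$ to grow fast enough to beat the nonlinear and small-divisor losses, yet slow enough that the torus amplitudes $I_n \sim w(n)^{-2}$ still decay (which they do, barely, for any $\sigma > 2$). I expect the crux of the proof to be a careful bookkeeping lemma showing that, on the constraint surface of the sextic term and after division by the Diophantine denominators, the net exponential weight is controllable by a constant to a power counting the number of involved modes, uniformly along the iteration; the threshold $\sigma = 2$ is exactly where this bookkeeping breaks, which is why the theorem is stated for $\sigma > 2$. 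Once this lemma is in hand, the rest is a by-now-standard (if intricate) KAM convergence argument in the almost-periodic category.
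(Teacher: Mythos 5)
Your outline reproduces the general shape of the paper's argument (a weighted-norm KAM scheme, homological equations controlled by the Diophantine condition (\ref{S1}), a rapidly decreasing $\epsilon_s$, linear stability from a normal form of order two), but it stops short of the quantitative ingredients on which the theorem actually stands, and it misidentifies where the hypothesis $\sigma>2$ enters. The ``careful bookkeeping lemma'' you defer to is not a routine weight comparison on the constraint surface: it is the tame inequality (\ref{001}) of Lemma \ref{005}, which must produce a \emph{surplus} $\tfrac12\sum_{i\geq3}\ln^{\sigma}\lfloor n_i^*\rfloor$ beyond mere boundedness, since this surplus is what pays simultaneously for the Poisson-bracket combinatorics and for the small divisors. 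Obtaining it requires the superadditivity-type estimate $\ln^{\sigma}(x+y)\leq\ln^{\sigma}x+\tfrac12\ln^{\sigma}y$ for $x\geq y\geq c(\sigma)$ (Lemma \ref{122203}), which fails near $1$ and forces the modified weight $\lfloor n\rfloor=\max\{c(\sigma),|n|\}$; none of this is visible in your sketch. Likewise, to truncate the Diophantine conditions at all one needs Lemma \ref{a1}: under near-resonance and momentum conservation, $\sum_n|k_n-k_n'|\ln^{\sigma}\lfloor n\rfloor\leq 3\cdot4^{\sigma}\sum_{i\geq3}\ln^{\sigma}\lfloor n_i^*\rfloor$, i.e.\ the two largest modes are controlled by the remaining ones.

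The decisive point you miss is the mode-counting. The set of modes on which the Diophantine condition must be imposed at step $s$ has size $N_s^*\sim\exp\{B_s^{1/\sigma}\}$, far too large for a naive lower bound on $\prod_n\bigl(1+(k_n-k_n')^2\langle n\rangle^4\bigr)^{-1}$. The paper splits this product at $N_s=B_s^{(\sigma-1)/\sigma}/\ln^{\sigma}B_s$ (see (\ref{030701})--(\ref{M6})), bounding the low modes via $\ln(|k_n-k_n'|+8)$ and the high modes by converting $|k_n-k_n'|\ln\lfloor n\rfloor$ into $B_s(\ln N_s)^{1-\sigma}$; the resulting lower bound $\exp\{-100B_s(\ln B_s)^{1-\sigma}\}$ must dominate $\epsilon_s^{0.01}$, and since $B_s\sim s\ln^2 s\cdot(3/2)^s|\ln\epsilon_0|$ and $\ln B_s\sim s$, this reduces to the boundedness of $s^{2-\sigma}\ln^2 s$ --- which is exactly the condition $\sigma>2$ (see (\ref{022402})). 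Your stated reason for the threshold (``the relevant tail sums converge'') is not the operative one: sums such as $\sum_n e^{-\delta\ln^{\sigma}\lfloor n\rfloor}$ converge for every $\sigma>1$. Finally, your proposed substitution $q_n=\sqrt{I_n+J_n}\,e^{\mathbf{i}\theta_n}$ introduces genuine action-angle variables, which are singular in the full-dimensional setting where $I_n(0)$ decays super-exponentially; the paper deliberately remains in Cartesian coordinates and tracks $J_n=|q_n|^2-I_n(0)$ only through the decomposition $R=R_0+R_1+R_2$ by order in $J$, eliminating $R_0,R_1$ and retaining $R_2$ as the order-two normal form from which linear stability is read off. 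As written, the proposal is a plausible plan whose crucial steps are precisely the ones left unproved.
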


\begin{rem}
From Lemma 4.1 in \cite{Bourgain2005JFA}, we know the following resonance issue:
\begin{equation*}\label{S3}
P\left\{V: \left|\left|\sum_{n\in\mathbb{Z}}l_nV_n\right|\right|< \gamma\prod_{n\in\mathbb{Z}}\frac{1}{1+l_n^2\langle n\rangle^4},\forall\  l\neq0 \ \mathrm{with}\ \#\mathrm{supp}\ l<\infty\right\}<C\gamma,
\end{equation*}
where $P$ is the standard probability measure on $[0,1]^{\mathbb{Z}}$ and $C>0$ is an absolute constant.
\end{rem}

Since 1990's, the KAM theory for infinite dimensional Hamiltonian system has been well developed to study the existence and linear stability of invariant tori for Hamiltonian PDEs. See \cite{BBM2014,BBM2016,BBHM2018,CW1993,CY2000,K1987,K2000,Kuksin2004,KP1996,LY2010,P'1996,P1996,W1990} for the related works for 1-dimensional PDEs. For high dimensional PDEs, the situation becomes more complicated due to the multiple eigenvalues of Laplacian operator. Bourgain \cite{Bourgain1998Annals,Bourgain2005} developed a new method initialed by Craig-Wayne \cite{CW1993} to prove the existence of KAM tori for $d$-dimensional nonlinear Schr$\ddot{\mbox{o}}$dinger equations (NLS) and $d$-dimensional NLW with $d\geq 1$, based on the Newton iteration, Fr$\ddot{\mbox{o}}$hlich-Spencer techniques, Harmonic analysis and semi-algebraic set theory. This is so-called C-W-B method.
Later, Eliasson-Kuksin \cite{EK2010} obtained both the existence and the linear stability of KAM tori for $d$-dimensional NLS in a classical KAM way. Also see \cite{BB2012,BLP2015,BWJEMS,W2020} for the related problem.

In the above works, the obtained KAM tori are of low (finite) dimension which are the support of the quasi-periodic solutions. It must be pointed out that the constructed quasi-periodic solutions are not typical in the sense that the low dimensional tori have measure zero for any reasonable measure on the infinite dimensional phase space. It is natural at this point to find the full dimensional tori which are the support of the almost periodic solutions. The first result on the existence of almost periodic solutions for Hamiltonian PDEs was proved by Bourgain in \cite{Bourgain1996} using C-W-B method. Later, P$\ddot{\mbox{o}}$schel \cite{Poschel2002} (also see \cite{GX2013} by Geng-Xu) constructed the almost periodic solutions for 1-dimensional NLS by the classical KAM method. The basic idea to obtain these almost periodic solutions is by perturbing the quasi-periodic ones. That is why the action $I=(I_n)$ must satisfy some very strong compactness properties. In fact, the following super-exponential decay for the action $I$ is given
 $$I_n\sim \varepsilon^{2^{|n|}},\qquad \varepsilon\ll 1$$
 as $n\rightarrow\infty$. It means that these solutions are with very high regularity and look like the quasi-periodic ones.

The first try to obtain the existence of full dimensional tori with slower decay was given by Bourgain \cite{Bourgain2005JFA}, who proved that 1-dimensional NLS has a full dimensional KAM torus of prescribed frequencies with the actions of the tori obeying the estimate (\ref{031701}). {{A big difference from \cite{Bourgain1996} and \cite{Poschel2002}, Bourgain treated all Fourier modes at once under Diophantine conditions (\ref{S1}), which is totally different from those in constructing low dimensional KAM tori and causes a much worse small denominator problem. It is well known that one of cores of KAM theory is the dealing of these small divisors. }}.

To overcome such a problem, Bourgain firstly made use of two simple but important facts: let
$n_i$ be a finite set of modes satisfying
\begin{equation*}
|n_1|\geq |n_2|\geq |n_3|\geq \cdots
\end{equation*}
 and
 \begin{equation}\label{022001}
n_1-n_2+n_3-n_4+\cdots=0.
\end{equation}
 One observation is that the following inequality holds
\begin{equation*}
\sum_{i\geq1}\sqrt{|n_i|}-2\sqrt{|n_1|}\geq\frac14\sum_{i\geq 3}{\sqrt{|n_i|}};
\end{equation*}the other one is that:
the first two biggest indices $|n_1|$ and $|n_2|$ can be controlled by other indices unless $n_1=n_2$, i.e.
\begin{equation}\label{022004}|n_1|+|n_2|\leq C\left(|n_3|+|n_4|+\cdots\right),
 \end{equation}under some further assumptions
 \begin{equation}\label{022002}
n_1^2-n_2^2+n_3^2-n_4^2+\cdots=O(1).
\end{equation}
The conditions (\ref{022001}) and (\ref{022002}) hold true for NLS, which follow from momentum conservation
and the quadratic growth of the frequencies for 1-dimensional NLS (also whenever the small divisors appear). Now it is possible to deal with the small divisor introduced by (\ref{S1}). The second key step is some quantitative analysis are needed to guarantee the KAM iteration works.

Recently, Cong-Liu-Shi-Yuan \cite{CLSY2018JDE} generalized Bourgain's result from $\theta=1/2$ to any $0<\theta<1$ in a classical KAM way, where the actions of the tori satisfying
\begin{equation*}\label{020203}I_n\sim e^{-{|n|^{\theta}}},\qquad \theta\in (0,1).
\end{equation*}
The authors also proved the obtained tori are stable in a sub-exponential long time. Another important progress is given by Biasco-Massetti-Procesi \cite{BMP2021Poincare}, who proved the existence
and linear stability of almost periodic solution for 1-dimensional NLS by constructing a
rather abstract counter-term theorem for infinite dimensional Hamiltonian system. The idea is based on a more
geometric point of view. What is more interesting is that a construction of elliptic tori independent of their dimension
is proved by this method, which gives a new perspective to study the existence of invariant tori with finite dimension and infinite dimension.

When infinite systems of coupled harmonic oscillators with finite-range couplings are considered, a much better result has been obtained. Precisely, P$\ddot{\mbox{o}}$schel in \cite{Poschel1990} proved the existence of full dimensional tori for infinite dimensional Hamiltonian system with spatial structure of short range couplings consisting of
connected sets only, where the action satisfies
\begin{equation}\label{032301}I_n\sim e^{-\ln^{\sigma}|n|},\qquad n\rightarrow \infty
\end{equation}with any $\sigma>2$. In particular, in the simplest case, i.e. only nearest neighbour coupling, the result can be optimized for any $\sigma>1$.

Of course, Hamiltonian PDEs are not of short range, and do not contain such spatial structure. Therefore it is curious to ask the following question:
{\it do there exist full dimensional tori satisfying (\ref{032301}) for Hamiltonian PDEs such as equation (\ref{L1})?} In this paper, we will give an affirmative answer. Now we will introduce some basic idea in the below. The proof of main theorem follows from the frame given by \cite{CLSY2018JDE}. In the first step, it is proven a so-called tame inequality (\ref{001}) in Lemma \ref{005}. Combining (\ref{022004}) and (\ref{001}), it affords an opportunity to control the small divisor (\ref{S1}). Next, one needs a cut off procedure such that one deals with the tori of finite dimension in the $s$-th step KAM iteration. When constructing the tori with a slower decay, the cut off becomes larger. Roughly speaking, following Bourgain's idea the dimension $N_s$ of the tori in \cite{Bourgain2005JFA} is given by
\begin{equation}\label{032402}
N_s\sim B_s^2,
\end{equation}
while in our case it should be
\begin{equation*}
N_s\sim \exp\left\{B_s^{\frac1{\sigma}}\right\},
\end{equation*}
(see (\ref{032401}) for the details). Obviously, one has
\begin{equation}\label{032404}
\exp\left\{B_s^{\frac1{\sigma}}\right\}\gg B_s^2,
\end{equation}
which is too large.
By a delicate calculation, we find that it only needs the following estimate
\begin{equation*}
\sum_{i=1}^{N_s}\sqrt{i}\sim B_s
\end{equation*}
in Bourgain's case, which implies
\begin{equation}\label{032403}
N_s\sim B_{s}^{\frac23}.
\end{equation}
There may be no essential difference between (\ref{032402}) and (\ref{032403}). But this simple observation is very useful in our case. Precisely,
\begin{equation*}
\sum_{i=1}^{N_s}\ln^{\sigma}|i|\sim B_s
\end{equation*}
means
\begin{equation*}
N_s\sim \frac{B_s}{\ln^{\sigma-1}B_s},
\end{equation*}
which is an important improvement than (\ref{032404}).
Fortunately, the low bound of the small divisors still can be controlled by super-exponential decay using $\sigma>2$ (see (\ref{022402}) for the details). The similar discussions also appear in other places such as the estimates of Poisson bracket, the Hamiltonian flow and the shift of the frequency. Overcome the above difficulties, we finish the proof of our main result.

Finally we have some further comments between the result given by P$\ddot{\mbox{o}}$schel in \cite{Poschel1990} and ours. The main difference is that the infinite dimensional  Hamiltonian systems of short range are studied in \cite{Poschel1990} while the nonlinear Schr\"odinger equation are not, which is in our case. So our result may be optimal in the sense that we get the same conclusion as in \cite{Poschel1990} (see (\ref{032301})).  Besides this, P$\ddot{\mbox{o}}$schel constructed a KAM theorem in action-angle coordinates while we study in Cartesian coordinates. There are some differences between two coordinates. See \cite{BFN2020} for more comments. On the other hand the Diophantine conditions (\ref{S1}), which is introduced by Bourgain \cite{Bourgain2005JFA} firstly, is more explicit and direct compared with the Diophantine conditions given by P$\ddot{\mbox{o}}$schel in \cite{Poschel1990} where some fixed approximation functions are needed. Based on the above two points, our method could be applied to some other Hamiltonian PDEs such as nonlinear wave equation combining with some techniques given in \cite{CY2021}.

\section{The Norm of the Hamiltonian}
In this section, we will introduce some notations and definitions firstly.

For any $\sigma>2$, define the Banach space $\mathfrak{H}_{\sigma,\infty}$ of all complex-valued sequences
$q=(q_n)_{n\in\mathbb{Z}}$ with
\begin{equation}\label{042501}
\|q\|_{\sigma,\infty}=\sup_{n\in\mathbb{Z}}\left|q_n\right|e^{\ln^{\sigma}\lfloor n\rfloor}<\infty,
\end{equation}
where
\begin{equation*}\label{030401}
\lfloor n \rfloor=\max\{c(\sigma),|n|\}
\end{equation*}with $c(\sigma)>1$ is a constant depending on $\sigma$ only.
\begin{rem}The constant $c(\sigma)$ is given explicitly by (\ref{030402}) in Lemma \ref{122203}. Let
\begin{equation*}
\lfloor n\rfloor'=\max\left\{2,|n|\right\},
\end{equation*}
and we can define the Banach space $\mathfrak{H}'_{\sigma,\infty}$ of all complex-valued sequences
$q=(q_n)_{n\in\mathbb{Z}}$ by
\begin{equation*}
\|q\|'_{\sigma,\infty}=\sup_{n\in\mathbb{Z}}\left|q_n\right|e^{\ln^{\sigma}\lfloor n\rfloor'}<\infty.
\end{equation*}
It is easy to see that the two Banach space $\mathfrak{H}_{\sigma,\infty}$ and $\mathfrak{H}'_{\sigma,\infty}$ are equivalent in the sense that there two positive constants $c_1(\sigma)$ and $c_2(\sigma)$ depending on $\sigma$ only such that
\begin{equation*}
c_1(\sigma)\|\cdot\|\leq \|\cdot\|'\leq c_2(\sigma)\|\cdot\|.
\end{equation*}
Actually, for fixed $\sigma>2$, one has
\begin{equation*}
\left|q_n\right|e^{\ln^{\sigma}\lfloor n\rfloor}=\left|q_n\right|e^{\ln^{\sigma}\lfloor n\rfloor'}
\end{equation*}if $|n|$ is large enough (i.e. $|n|\geq c(\sigma)$). The main reason we choose the Banach space $\mathfrak{H}_{\sigma,\infty}$ instead of $\mathfrak{H}'_{\sigma,\infty}$ is to simplify the proof of some technical lemmas and some estimates in KAM iteration.
\end{rem}
Introduce the notations $I_n=|q_n|^2$ and $J_n=I_n-I_n(0)$, where $I_n(0)$ will be considered as the initial data and satisfy
\begin{equation}\label{M13}
I_{n}(0)\leq e^{-2\ln^{\sigma}\lfloor n\rfloor}.
\end{equation} Assume the Hamiltonian $R(q,\bar q)$ with the form of
\begin{equation*}
R(q,\bar q)=\sum_{a,k,k'\in\mathbb{N}^{\mathbb{Z}}}B_{akk'}\mathcal{M}_{akk'},
\end{equation*}
where the so-called monomials
\begin{equation*}\label{030502}
\mathcal{M}_{akk'}=\prod_{n\in\mathbb{Z}}I_n(0)^{a_n}q_n^{k_n}\bar q_n^{k_n'},
\end{equation*}
satisfy the mass conservation
\begin{equation}
\sum_{n\in\mathbb{Z}}\left(k_n-k_n'\right)=0,
\end{equation}
and the momentum conservation
\begin{equation}\label{050901}
\sum_{n\in\mathbb{Z}} \left(k_n-k_n'\right)n=0
\end{equation}
and $B_{akk'}$ are the coefficients.
Given a monomial $\mathcal{M}_{akk'}$,
define by
\begin{equation*}
\mbox{supp}\ \mathcal{M}_{akk'}=\{n\in\mathbb{Z}:a_n+k_n+k_n'\neq 0\},
\end{equation*}
and
\begin{equation*}
\mbox{degree}\ \mathcal{M}_{akk'}=2|a|+|k|+|k'|:=\sum_{n\in\mathbb{Z}}(2a_n+k_n+k_n').
\end{equation*}
\begin{rem}
Here we always assume that $$\mbox{degree}\ \mathcal{M}_{akk'}\geq 4.$$ When equation (\ref{L1}) is considered, one has $$\mbox{degree}\ \mathcal{M}_{akk'}\geq 6.$$
\end{rem}

Denote by
\begin{equation*}
n_1^*=\max\{|n|:a_n+k_n+k_n'\neq 0\}.
\end{equation*}Now we will define the weighted $\ell^{\infty}$ norm of the Hamiltonian $R(q,\bar q)$ with the weight $\rho\geq0$ by
\begin{def}\label{083103}
\begin{equation}\label{042602}
||R||_{\sigma,\rho}=\sup_{a,k,k'\in\mathbb{N}^{\mathbb{Z}}}\frac{|B_{akk'}|}{e^{\rho\left(\sum_{n\in\mathbb{Z}}\left(2a_n+k_n+k_n'\right)\ln^{\sigma} \lfloor n\rfloor-2\ln^{\sigma} \lfloor n_1^*\rfloor\right)}}.
\end{equation}
\end{def}
\begin{rem}
Here we point out that the index $\sigma$ will be fixed but the index $\rho$ will become larger during KAM iteration (see the details at the beginning of Section \ref{031501}). For simplicity we will write $||R||_{\rho}=||R||_{\sigma,\rho}$ in the below.
\end{rem}
For any $a,k,k'$, denote $(n^*_i)_{i\geq1}$ the decreasing rearrangement of
\begin{equation*}
\{|n|:\ \mbox{where $n$ is repeated}\ 2a_n+k_n+k_n'\ \mbox{times}\}.
\end{equation*}
If $k$ and $k'$ satisfy (\ref{050901}),
then it is easy to see that
\begin{equation*}
\sum_{n\in\mathbb{Z}}\left(2a_n+k_n+k_n'\right)\ln^{\sigma} \lfloor n\rfloor-2\ln^{\sigma} \lfloor n_1^*\rfloor\geq 0.
\end{equation*}
In fact we can prove a positive lower bound of
$$\sum_{n\in\mathbb{Z}}\left(2a_n+k_n+k_n'\right)\ln^{\sigma} \lfloor n\rfloor-2\ln^{\sigma}\lfloor n_1^*\rfloor,$$
which is important to overcome the small divisor. Precisely we have the following lemma:
\begin{lem}\label{005}

Given any $a,k,k'$, assume the condition (\ref{050901}) is satisfied.
Then for any $\sigma>2$, one has
\begin{equation}\label{001}
\sum_{n\in\mathbb{Z}}(2a_n+k_n+k_n'){\ln^{\sigma}\lfloor n\rfloor}-2{\ln^{\sigma}\lfloor n_1^*\rfloor}\geq\frac12\sum_{i\geq 3}{\ln^{\sigma}\lfloor n_i^*\rfloor}.
\end{equation}
\end{lem}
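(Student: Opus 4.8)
The plan is to reduce the inequality to a statement about a multiset of integers and then prove it by a clean combinatorial/analytic argument. Set up notation: let $(n_i^*)_{i\ge 1}$ be the decreasing rearrangement of the multiset where each $n\in\mathbb{Z}$ appears $2a_n+k_n+k_n'$ times, as in the statement. Write $m_i = \lfloor n_i^* \rfloor \ge c(\sigma)$, so the claim becomes
\[
\sum_{i\ge 1} \ln^{\sigma} m_i \;-\; 2\ln^{\sigma} m_1 \;\ge\; \frac12 \sum_{i\ge 3} \ln^{\sigma} m_i,
\]
which after moving terms is equivalent to
\[
\sum_{i\ge 3} \ln^{\sigma} m_i \;\ge\; 2\ln^{\sigma} m_1 - 2\ln^{\sigma} m_2 - \ln^{\sigma} m_3,
\]
or, more symmetrically, to $\ln^{\sigma} m_2 + \tfrac12\sum_{i\ge 3}\ln^{\sigma} m_i \ge \ln^{\sigma} m_1 - \big(\ln^{\sigma} m_1 - \ln^{\sigma} m_2\big)$... the cleanest target is: it suffices to show $2\ln^{\sigma} m_1 \le 2\ln^{\sigma} m_2 + \tfrac12\sum_{i\ge3}\ln^{\sigma} m_i + \sum_{i\ge 3}\ln^{\sigma}m_i$, i.e. it is enough to bound $\ln^{\sigma} m_1$ in terms of $\ln^{\sigma} m_2$ and the tail.

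The key input is the momentum conservation (\ref{050901}). First I would split into two cases. \emph{Case 1: $m_1 = m_2$}, i.e. the largest modulus is attained at least twice (counted with the multiplicity $2a_n+k_n+k_n'$, which accounts either for a repeated index or for $n$ and $-n$ both occurring, or $a_n\ge 1$). Then $2\ln^{\sigma} m_1 = 2\ln^{\sigma} m_2 \le \sum_{i\ge 1}\ln^{\sigma} m_i - 2\ln^{\sigma}m_1 \cdot 0 + \dots$ — more directly, $\sum_{i\ge1}\ln^\sigma m_i - 2\ln^\sigma m_1 = \sum_{i\ge 3}\ln^\sigma m_i \ge \tfrac12\sum_{i\ge3}\ln^\sigma m_i$, so the inequality is immediate (in fact with room to spare). \emph{Case 2: $m_1 > m_2$}, so $n_1^*$ is the unique index of maximal modulus and it occurs with multiplicity exactly $1$; then momentum conservation $\sum_n (k_n-k_n')n = 0$ forces the single largest frequency to be cancelled by the remaining ones, giving $|n_1^*| \le \sum_{i\ge 2}|n_i^*|$ (using that the coefficients $k_n-k_n'$ are bounded by $2a_n+k_n+k_n'$ and the triangle inequality). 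This is the only place momentum conservation enters.

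The analytic heart is then a one-variable lemma: for $\sigma>2$ there is a constant $c(\sigma)>1$ such that whenever $x_1 > x_2 \ge x_3 \ge \dots \ge c(\sigma)$ are reals with $x_1 \le \sum_{i\ge 2} x_i$, one has $2\ln^\sigma x_1 \le 2\ln^\sigma x_2 + \tfrac32 \sum_{i\ge 3}\ln^\sigma x_i$ (or whatever exact constants make the bookkeeping close). To prove this I would use concavity/subadditivity properties of $t\mapsto \ln^\sigma t$ for large $t$: since $\ln^\sigma(xy) = (\ln x + \ln y)^\sigma$ and, for $u,v>0$, $(u+v)^\sigma \le u^\sigma + v^\sigma + (\text{cross terms})$ which for $\sigma$ near $2$ are controlled — the mechanism is exactly the one flagged in the introduction, where the author notes that the bound $\sum \ln^\sigma m_i$ behaves like $\ln^\sigma(\prod m_i)$ only up to lower-order corrections, and $\sigma>2$ is what makes the corrections harmless. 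Concretely: from $x_1 \le \sum_{i\ge2} x_i$ and $x_1>x_2$ there are at least two terms in the sum, so either $x_2 \ge x_1/2$ (then $\ln^\sigma x_1 \le \ln^\sigma(2x_2) = (\ln 2 + \ln x_2)^\sigma \le \ln^\sigma x_2 \cdot (1 + \ln2/\ln c(\sigma))^\sigma$, which is $\le 2\ln^\sigma x_2$ once $c(\sigma)$ is chosen large), or $x_2 < x_1/2$, whence $\sum_{i\ge 3} x_i \ge x_1 - x_2 > x_1/2$, so the tail contains enough "mass" that $\sum_{i\ge3}\ln^\sigma x_i$ dominates $\ln^\sigma x_1$ after a similar logarithmic estimate (distributing the mass $\ge x_1/2$ among the $x_i$, $i\ge3$, and using superadditivity of $\ln^\sigma$ in the relevant range). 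Chaining the two sub-cases and reinstating $\lfloor\cdot\rfloor$ for $m_i$ gives (\ref{001}).

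I expect the main obstacle to be pinning down the explicit constant $c(\sigma)$ and verifying the logarithmic inequalities uniformly — in particular handling the regime where $x_2$ is just barely below $x_1/2$ and the tail $x_3, x_4,\dots$ could consist of many small terms rather than a few large ones (so that $\sum \ln^\sigma x_i$ is small even though $\sum x_i$ is not). This forces the argument to be carried out not purely at the level of sums of logarithms but by an induction on the number of tail terms, or by first reducing to the worst case where the tail mass is concentrated in as few terms as possible; I would organize the proof around the function $c(\sigma) := \exp\big((1-2^{1-\sigma})^{-1/(\sigma-?)}\big)$ or similar (matching whatever Lemma \ref{122203} supplies), so that the one-variable estimate $(\ln c(\sigma) + \ln 2)^\sigma \le 2 \ln^\sigma c(\sigma)$ holds by construction. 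Everything else — the case split via $m_1 \gtrless m_2$, the invocation of momentum conservation, and the final assembly — is routine once that lemma is in hand.
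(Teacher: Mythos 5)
Your opening move coincides with the paper's: momentum conservation (\ref{050901}) produces signs $\mu_i\in\{\pm1\}$ with $\sum_i\mu_i n_i=0$, hence $\lfloor n_1^*\rfloor\le\sum_{i\ge2}\lfloor n_i^*\rfloor$, and the lemma reduces to the purely analytic claim $\ln^{\sigma}\lfloor n_1^*\rfloor\le\ln^{\sigma}\lfloor n_2^*\rfloor+\frac12\sum_{i\ge3}\ln^{\sigma}\lfloor n_i^*\rfloor$. But the proposal never proves this claim, and that is where all the content of the lemma lives. Two concrete problems. First, your reductions are not equivalences: the correct equivalent form is $\sum_{i\ge3}\ln^{\sigma}m_i\ge 2\ln^{\sigma}m_1-2\ln^{\sigma}m_2$ (there is no extra $-\ln^{\sigma}m_3$), and your ``cleanest target'' $2\ln^{\sigma}m_1\le2\ln^{\sigma}m_2+\frac32\sum_{i\ge3}\ln^{\sigma}m_i$ is strictly \emph{weaker} than what is needed, so establishing it would not establish (\ref{001}). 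Second, the dichotomy $x_2\ge x_1/2$ versus $x_2<x_1/2$ does not close the argument: in the first branch you obtain only $\ln^{\sigma}x_1\le2\ln^{\sigma}x_2$, which does not imply $\ln^{\sigma}x_1\le\ln^{\sigma}x_2+\frac12\sum_{i\ge3}\ln^{\sigma}x_i$ (with $x_1\approx 2x_2$ and a single tail term one still needs the quantitative estimate $(\ln 2+\ln x_2)^{\sigma}\le\ln^{\sigma}x_2+\frac12\ln^{\sigma}x_3$, which is exactly the nontrivial point); in the second branch you invoke an unproved ``superadditivity of $\ln^{\sigma}$'' and an induction on the tail that you explicitly leave open. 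You correctly diagnose the hard regime but do not resolve it.

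The missing ingredient is precisely the paper's Lemma \ref{122203}: for $\sigma>1$ there is $c(\sigma)>1$ such that $\ln^{\sigma}x+\ln^{\sigma}y\ge\ln^{\sigma}(x+y)+\frac12\ln^{\sigma}y$ whenever $x\ge y\ge c(\sigma)$ (proved by writing $x=ty$ and a mean-value estimate). Iterating this, absorbing $\lfloor n_3^*\rfloor,\lfloor n_4^*\rfloor,\dots$ one at a time into the partial sums, yields $\sum_{i\ge2}\ln^{\sigma}\lfloor n_i^*\rfloor\ge\ln^{\sigma}\bigl(\sum_{i\ge2}\lfloor n_i^*\rfloor\bigr)+\frac12\sum_{i\ge3}\ln^{\sigma}\lfloor n_i^*\rfloor\ge\ln^{\sigma}\lfloor n_1^*\rfloor+\frac12\sum_{i\ge3}\ln^{\sigma}\lfloor n_i^*\rfloor$, with no case split on $m_1$ versus $m_2$ and no analysis of how the tail mass is distributed. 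Until you state and prove that two-variable inequality (and correct the constants in your reduction), the proof is incomplete at its central step.
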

\begin{proof}Without loss of generality,
denote $(n_i)_{i\geq 1}$ the system $\{|n|:\ \mbox{where $n$ is repeated}\ 2a_n+k_n+k_n'\ \mbox{times}\}$ which satisfies $\ |n_1|\geq |n_2|\geq\cdots$. In view of (\ref{050901}), there exist $(\mu_i)_{i\geq1}$ with $\mu_i\in\{\pm1\}$ such that
\begin{equation*}
\sum_{i\geq1}\mu_in_i=0.
\end{equation*}
Hence we have
\begin{equation*}
|n_1|\leq \sum_{i\geq 2}|n_i|
\end{equation*}
and
\begin{equation*}
\lfloor n_1\rfloor\leq \sum_{i\geq 2}\lfloor n_i\rfloor.
\end{equation*}
Consequently
\begin{equation*}
{\ln^{\sigma}\lfloor n_1\rfloor}\leq \ln^{\sigma}\left({\sum_{i\geq 2}\lfloor n_i\rfloor}\right).
\end{equation*}
Noting that
\begin{equation*}
\sum_{n\in\mathbb{Z}}(2a_n+k_n+k_n'){\ln^{\sigma}\lfloor n\rfloor}={\sum_{i\geq 1}\ln^{\sigma}\lfloor n_i\rfloor},
\end{equation*}
 thus the inequality (\ref{001}) will follow from the inequality
\begin{equation}\label{002}
\sum_{i\geq 2}{\ln^{\sigma}\lfloor n_i\rfloor}\geq\ln^{\sigma}\left({\sum_{i\geq 2}\lfloor n_i\rfloor}\right)+\frac12\sum_{i\geq 3}{\ln^{\sigma}\lfloor n_i\rfloor}.
\end{equation}
Using (\ref{122201}) in Lemma \ref{122203}, one has
\begin{equation}\label{030101}
\ln^{\sigma}x+\ln^{\sigma}y\geq \ln^{\sigma}(x+y)+\frac12\ln^{\sigma}y, \quad \mbox{for}\ x\geq y\geq c(\sigma).
\end{equation} In view of (\ref{030101}) and by iteration, one obtains
\begin{eqnarray*}
\sum_{i\geq 2}{\ln^{\sigma}\lfloor n_i\rfloor}
&=&{\ln^{\sigma}\lfloor n_2\rfloor}+{\ln^{\sigma}\lfloor n_3\rfloor}+\sum_{i\geq 4}{\ln^{\sigma}\lfloor n_i\rfloor}\\
&\geq&\ln^{\sigma}\left(\sum_{2\leq i\leq3}\lfloor n_i\rfloor\right)+\frac12{\ln^{\sigma}\lfloor n_3\rfloor}+\sum_{i\geq 4}{\ln^{\sigma}\lfloor n_i\rfloor}\\
&=&\ln^{\sigma}\left(\sum_{2\leq i\leq3}\lfloor n_i\rfloor\right)+{\ln^{\sigma}\lfloor n_4\rfloor}+\frac12{\ln^{\sigma}\lfloor n_3\rfloor}+\sum_{i\geq 5}{\ln^{\sigma}\lfloor n_i\rfloor}\\
&\geq&\ln^{\sigma}\left(\sum_{2\leq i\leq4}\lfloor n_i\rfloor\right)+\frac12\left(\sum_{3\leq i\leq 4}{\ln^{\sigma}\lfloor n_i\rfloor}\right)+\sum_{i\geq 5}{\ln^{\sigma}\lfloor n_i\rfloor}\\
&&\cdots\\
&\geq&\ln^{\sigma}\left(\sum_{ i\geq2}\lfloor n_i\rfloor\right)+\frac12\left(\sum_{i\geq3}{\ln^{\sigma}\lfloor n_i\rfloor}\right),
\end{eqnarray*}
which finishes the proof of (\ref{002}).
\end{proof}

\begin{lem}(\textbf{Poisson Bracket})\label{010}
Let $\sigma>2,\rho>0$ and $$0<\delta_1,\delta_2<\min\left\{\frac14\rho,3-2\sqrt{2}\right\}.$$ Then one has
\begin{equation}\label{042704}
\left\|\{R_1,R_2\}\right\|_\rho\leq \frac{1}{\delta_2}\exp\left\{\frac{1000}{\delta_1}\cdot\exp\left\{\left(\frac{100}{\delta_1}\right)^{\frac1{\sigma-1}}\right\}\right\}\left\|R_1\right\|_{\rho-\delta_1}\left\|R_2\right\|_{\rho-\delta_2}.
\end{equation}
\end{lem}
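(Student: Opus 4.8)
The plan is to expand the Poisson bracket monomial-by-monomial and track, for each monomial of the product, how its coefficient and its weight exponent relate to those of the two factors. Write $R_1 = \sum B^{(1)}_{akk'}\mathcal{M}_{akk'}$ and $R_2 = \sum B^{(2)}_{bll'}\mathcal{M}_{bll'}$. The bracket is $\{R_1,R_2\} = \mathbf{i}\sum_m \left(\frac{\p R_1}{\p q_m}\frac{\p R_2}{\p \bar q_m} - \frac{\p R_1}{\p \bar q_m}\frac{\p R_2}{\p q_m}\right)$; differentiating $\mathcal{M}_{akk'}$ in $q_m$ brings down a factor $k_m$ and lowers $k_m$ by one. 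So each resulting monomial $\mathcal{M}_{c\kappa\kappa'}$ is obtained by picking an index $m$, decrementing one exponent in each factor and merging. First I would fix such a product term, call its index data $(a,k,k')$ and $(b,l,l')$ with merged data $(c,\kappa,\kappa')$, and record that $\kappa_n+\kappa'_n \le (k_n+k'_n)+(l_n+l'_n)$ for all $n$ (with the two decremented slots strictly smaller), while the $I_n(0)$-exponents simply add: $c_n = a_n+b_n$. The combinatorial prefactor from differentiation is at most $(\max_n k_n)(\max_n l_n)$, but I will bound this more crudely using the degrees — each exponent is itself bounded by the degree, which appears in the weight.

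The heart of the matter is the exponential weight bookkeeping. By Definition \ref{083103}, bounding $\|\{R_1,R_2\}\|_\rho$ amounts to showing, for each product monomial, that
\begin{equation*}
|B^{(1)}_{akk'}|\,|B^{(2)}_{bll'}| \cdot (\text{comb. factor}) \le \frac{1}{\delta_2}\exp\left\{\tfrac{1000}{\delta_1}e^{(100/\delta_1)^{1/(\sigma-1)}}\right\} \|R_1\|_{\rho-\delta_1}\|R_2\|_{\rho-\delta_2}\, e^{\rho E(c,\kappa,\kappa')},
\end{equation*}
where $E(c,\kappa,\kappa') = \sum_n(2c_n+\kappa_n+\kappa'_n)\ln^\sigma\lfloor n\rfloor - 2\ln^\sigma\lfloor n_1^*(c\kappa\kappa')\rfloor$. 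Using the definition of the norms of $R_1,R_2$, the left side is $\le \|R_1\|_{\rho-\delta_1}\|R_2\|_{\rho-\delta_2}\cdot(\text{comb.})\cdot e^{(\rho-\delta_1)E_1}e^{(\rho-\delta_2)E_2}$ with $E_1,E_2$ the corresponding exponents of the two factors. So I need the inequality
\begin{equation*}
(\rho-\delta_1)E_1 + (\rho-\delta_2)E_2 + \ln(\text{comb.}) \le \rho\, E(c,\kappa,\kappa') + \tfrac{1000}{\delta_1}e^{(100/\delta_1)^{1/(\sigma-1)}} - \ln\delta_2.
\end{equation*}
Since $2c_n+\kappa_n+\kappa'_n \le (2a_n+k_n+k_n')+(2b_n+l_n+l_n')$ with a deficit of exactly $2$ in total (one unit lost from each factor's $q$ or $\bar q$ exponent, counted with the factor-$2$... actually a deficit of $1$ in each factor, so $2$ total over the merged count), the linear parts satisfy $\sum_n(2c_n+\kappa_n+\kappa'_n)\ln^\sigma\lfloor n\rfloor \ge \sum_n(2a_n+\cdots)\ln^\sigma\lfloor n\rfloor + \sum_n(2b_n+\cdots)\ln^\sigma\lfloor n\rfloor - 2\ln^\sigma\lfloor n_1^*\rfloor$ roughly, and the key geometric input is Lemma \ref{005}: the "leftover" $\tfrac12\sum_{i\ge3}\ln^\sigma\lfloor n_i^*\rfloor$ from each factor, together with the triangle-type bound on the top two modes, absorbs the $-2\ln^\sigma\lfloor n_1^*\rfloor$ correction terms. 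Then the $-\delta_1 E_1 - \delta_2 E_2$ terms are negative (since $E_1,E_2\ge0$) and can be discarded, except that I must keep enough of $-\delta_1 E_1$ to dominate $\ln(\text{comb.})$.

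The main obstacle — and the place where the funny double-exponential constant $\tfrac{1000}{\delta_1}e^{(100/\delta_1)^{1/(\sigma-1)}}$ is born — is exactly this last point: controlling the combinatorial/degree prefactor against a small multiple $\delta_1$ of the weight exponent. The combinatorial factor and the number of ways to split a monomial both grow like a power of the degree $d = \sum_n(2a_n+k_n+k_n')$, so $\ln(\text{comb.})$ is $O(d\ln d)$ at worst; but the available negative term $\delta_1 E_1$ only controls $\delta_1$ times something like $\sum_{i\ge3}\ln^\sigma\lfloor n_i^*\rfloor$, which for a monomial supported on small modes can be as small as $\sim \delta_1\, d\,\ln^\sigma c(\sigma)$, i.e. only linear in $d$ with a tiny constant. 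The resolution is the elementary inequality $x\ln x \le \eta x + C_\eta$ applied with $\eta\sim\delta_1$, and tracing through how $C_\eta$ depends on $\eta$ for the function $\ln^\sigma$: one finds $C_\eta$ of the form $\exp\{c/\eta^{1/(\sigma-1)}\}$, which is precisely where $\sigma>2$ (actually $\sigma>1$ suffices here, but $\sigma>2$ is needed elsewhere) and the inner exponent $\tfrac1{\sigma-1}$ enter. I would isolate this as a one-variable calculus estimate, prove it first, and then feed it into the monomial comparison; the factor $\tfrac1{\delta_2}$ out front is the cheap part, coming simply from the shift $\rho-\delta_2 \to \rho$ on the one factor where I cannot afford to throw away $-\delta_2 E_2$ entirely (or from the worst single monomial having $E=0$). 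Summing the resulting bound over all ways to produce a fixed output monomial contributes only another power-of-degree factor, again absorbed the same way.
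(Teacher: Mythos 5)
Your overall architecture matches the paper's: expand the bracket monomial by monomial, use the exact weight identity (the merged exponent equals the sum of the two factors' exponents minus $2\ln^{\sigma}\lfloor j\rfloor$ for the contracted index $j$), invoke Lemma \ref{005} to extract a reserve of $\tfrac12\delta_i\sum_{i\geq3}\ln^{\sigma}\lfloor n_i^*\rfloor$ from each factor, and then control a combinatorial sum. But the step where you actually control that sum --- which is the entire quantitative content of the lemma --- is wrong as written, in two ways.

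First, the inequality you propose as the ``resolution,'' namely $x\ln x\leq \eta x+C_\eta$, is false for any fixed $\eta,C_\eta$ once $x$ is large, so it cannot absorb a term of size $d\ln d$ into a term of size $\delta_1 d$. More importantly, this is not where the double-exponential constant comes from. The number of ways to split a fixed output monomial into two factors is of order $\prod_n(1+\alpha_n)(1+\kappa_n)(1+\kappa_n')\leq 8^{d}$, i.e.\ exponential in the degree $d$, and a saving of only $c\,\delta_1 d$ with $\delta_1$ small cannot beat $d\ln 8$ by any one-variable calculus trick. The paper avoids this head-on comparison: in the main case ($|j|\leq n_3^*$) it notes that $(j,a,k,k')$ determines $(A,K,K')$ uniquely, drops the constraint of a fixed output, and sums $e^{-\frac1{12}\delta_1\sum_n(2a_n+k_n+k_n')\ln^{\sigma}\lfloor n\rfloor}$ over \emph{all} $(a,k,k')\in\mathbb{N}^{\mathbb{Z}}$. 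This factorizes into $\prod_{n}\bigl(1-e^{-c\delta_1\ln^{\sigma}\lfloor n\rfloor}\bigr)^{-1}$, and it is this infinite product (Lemma \ref{a5}) that produces $\exp\{\frac{C}{\delta}\exp\{(C/\delta)^{1/(\sigma-1)}\}\}$: because the weight $\ln^{\sigma}\lfloor n\rfloor$ grows only polylogarithmically, roughly $\exp\{(1/\delta)^{1/\sigma}\}$ modes contribute a factor $\sim 1/\delta$ each. The polynomial prefactor $|k_jK_j'-k_j'K_j|$ is absorbed separately by the elementary bound $x^pe^{-\delta x}\leq (p/e\delta)^p$ (Lemma \ref{8.6}), which is the correct replacement for your false inequality.

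Second, you do not address the case where the contracted index $j$ is one of the two largest modes of its factor ($|j|>n_3^*$). There the $|j|\leq n_3^*$ saving is unavailable, and the paper needs a genuinely different counting argument (Remark \ref{042703}): one sums over the multiset $\{n_i\}_{i\geq3}$, lets $n_1$ range over the support of the output monomial (cardinality at most the output degree, paid for by $e^{-\frac18\delta\sum_n(2\alpha_n+\kappa_n+\kappa_n')}$, which is where the $1/\delta_2$ prefactor really enters), determines $n_2$ from the rest, and controls the multiplicity $\prod_n(1+l_n^2)$ via the bound (\ref{042807}), whose constant is again a double exponential for the same mode-counting reason. Without this case split and the momentum-conservation structure behind it, the estimate does not close.
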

\begin{proof}
Let
\begin{equation*}
R_1(q,\bar q)=\sum_{a,k,k'\in\mathbb{N}^{\mathbb{Z}}} b_{akk'}\mathcal{M}_{akk'}
\end{equation*}
and
\begin{equation*}
R_2(q,\bar q)=\sum_{A,K,K'\in\mathbb{N}^{\mathbb{Z}}} B_{AKK'}\mathcal{M}_{AKK'}.
\end{equation*}
Then one has
\begin{equation*}
\{R_1,R_2\}=\sum_{a,k,k',A,K,K'\in\mathbb{N}^{\mathbb{Z}}}b_{akk'}B_{AKK'}\{\mathcal{M}_{akk'},\mathcal{M}_{AKK'}\},
\end{equation*}
where
\begin{eqnarray*}
\{\mathcal{M}_{akk'},\mathcal{M}_{AKK'}\}
&=&{\textbf{i}}\sum_{j\in\mathbb{Z}}\left(\prod_{n\neq j}I_n(0)^{a_n+A_n}q_n^{k_n+K_n}\bar{q}_n^{k_n'+K_n'}\right)\\
&&\times\left((k_jK_j'-k_j'K_j)I_j(0)^{a_j+A_j}q_j^{k_j+K_j-1}\bar{q}_j^{k_j'+K_j'-1}\right).
\end{eqnarray*}
The coefficient of the monomial $\mathcal{M}_{\alpha\kappa\kappa'}$ in $\left\{R_1,R_2\right\}$ is given by
\begin{equation}\label{006}
 B_{\alpha\kappa\kappa'}=\textbf{i}\sum_{j\in\mathbb{Z}}\sum_{*}\sum_{**}(k_jK_j'-k_j'K_j)b_{akk'}B_{AKK'},
\end{equation}
where
\begin{equation*}
\sum_{*}=\sum_{a,A\in\mathbb{N}^{\mathbb{Z}} \atop a+A=\alpha},
\end{equation*}
and
\begin{equation*}
\sum_{**}=\sum_{k,k',K,K'\in\mathbb{N}^{\mathbb{Z}}\atop \mbox{when}\ n\neq j, k_n+K_n=\kappa_n,k_n'+K_n'=\kappa_n';\mbox{when}\ n=j, k_n+K_n-1=\kappa_n,k_n'+K_n'-1=\kappa_n'}.
\end{equation*}
To estimate (\ref{042704}), some simple facts are given firstly:

$\textbf{1}.$ If $j\notin\ \mbox{supp}\ (k+k') \bigcap\ \mbox{supp}\ (K+K')$, then
\begin{equation*}
k_jK_j'-k_j'K_j=0.
\end{equation*}
Hence we always assume $j\in\ \mbox{supp}\ (k+k') \bigcap\ \mbox{supp}\ (K+K')$, which implies
\begin{equation}\label{122801}
|j|\leq \min\{n_1^*,N^*_1\}.
\end{equation}

 Let
\begin{equation*}
\nu_1^*=\max\{|n|:\alpha_n+\kappa_n+\kappa_n'\neq0\},
\end{equation*}and then the following inequality always holds
\begin{equation}\label{041801}
\nu_1^*\leq \max\{n_1^*,N_1^*\}.
\end{equation}
Combining (\ref{122801}) and (\ref{041801}), one has
\begin{equation}\label{031901}
\ln^{\sigma}\lfloor j\rfloor+\ln^{\sigma}\lfloor\nu_1^*\rfloor-\ln^{\sigma}\lfloor n_1^*\rfloor-\ln^{\sigma}\lfloor N_1^*\rfloor\leq 0.
\end{equation}

$\textbf{2}.$ Note that
\begin{eqnarray}
\nonumber\sum_{i\geq 1}\ln^{\sigma}\lfloor n_i^*\rfloor
\nonumber&=&\sum_{n\in\mathbb{Z}}(2a_n+k_n+k_n')\ln^{\sigma}\lfloor n\rfloor\\
\nonumber&\geq&\sum_{n\in\mathbb{Z}}(2a_n+k_n+k_n')\\
\label{042604}&\geq&\sum_{n\in\mathbb{Z}}(k_n+k_n')
\end{eqnarray}
and
\begin{eqnarray}
\nonumber\sum_{i\geq 3}\ln^{\sigma}\lfloor n_i^*\rfloor
\nonumber&\geq&\sum_{n\in\mathbb{Z}}(2a_n+k_n+k_n')-2\\
\nonumber&\geq&\frac12\sum_{n\in\mathbb{Z}}(2a_n+k_n+k_n')\\
\label{042605}&\geq&\frac12\sum_{n\in\mathbb{Z}}(k_n+k_n').
\end{eqnarray}
Based on (\ref{042604}) and (\ref{042605}), we obtain
\begin{eqnarray}\label{042606}
\sum_{n\in\mathbb{Z}}(k_n+k_n')(K_n+K_n')
&\leq& 2\left(\sum_{i\geq 1}\ln^{\sigma}\lfloor n_i^*\rfloor\right)\left(\sum_{i\geq 3}\ln^{\sigma}\lfloor N_i^*\rfloor\right).
\end{eqnarray}

$\textbf{3}.$ It is easy to see that
\begin{eqnarray}
&&\nonumber\sum_{n\in\mathbb{Z}}(2\alpha_n+\kappa_n+\kappa_n')\\
&=&\label{052802}\left(\sum_{n\in\mathbb{Z}}\left(2a_n+k_n+k_n'\right)\right)+\left(\sum_{n\in\mathbb{Z}}\left(2A_n+K_n+K_n'\right)\right)-2
\end{eqnarray}
and
\begin{eqnarray}
&&\nonumber\sum_{n\in\mathbb{Z}}(2\alpha_n+\kappa_n+\kappa_n')\ln^{\sigma}\lfloor n\rfloor\\
&=&\left(\sum_{n\in\mathbb{Z}}\left(2a_n+k_n+k_n'\right)\ln^{\sigma}\lfloor n\rfloor\right)\nonumber\\
&&+\left(\sum_{n\in\mathbb{Z}}\left(2A_n+K_n+K_n'\right)\ln^{\sigma}\lfloor n\rfloor\right)-2\ln^{\sigma}\lfloor j\rfloor\label{052801}.
\end{eqnarray}
In view of (\ref{042602}) and (\ref{001}) in Lemma \ref{005}, one has
\begin{equation}
|b_{akk'}|
% \nonumber &\leq& ||H_1||_{\rho-\delta_1}e^{(\rho-\delta_1)\left(\sum_{n\in\mathbb{Z}}(2a_n+k_n+k_n')\ln^{\sigma}\lfloor n\rfloor\right)-2(\rho-\delta_1)\ln^{\sigma}\lfloor n_1^*\rfloor}\\
\label{007}\leq||R_1||_{\rho-\delta_1}e^{\rho\left(\sum_{n\in\mathbb{Z}}(2a_n+k_n+k_n')
\ln^{\sigma}\lfloor n\rfloor-2\ln^{\sigma}\lfloor n_1^*\rfloor\right)}e^{-\frac12\delta_1
\left(\sum_{i\geq 3}\ln^{\sigma}\lfloor n_i^*\rfloor\right)},
\end{equation}
and
\begin{equation}
|B_{AKK'}|\leq \label{008}||R_2||_{\rho-\delta_2}e^{\rho\left(\sum_{n\in\mathbb{Z}}(2A_n+K_n+K_n')
\ln^{\sigma}\lfloor n\rfloor-2\ln^{\sigma}\lfloor N_1^*\rfloor\right)}e^{-\frac12\delta_2
\left(\sum_{i\geq 3}\ln^{\sigma}\lfloor N_i^*\rfloor\right)}.
\end{equation}
 Using (\ref{052801}), substitution of (\ref{007}) and (\ref{008}) in (\ref{006}) gives
\begin{eqnarray*}
\label{009}\left|B_{\alpha\kappa\kappa'}\right|
\nonumber&\leq&||R_1||_{\rho-\delta_1}||R_2||_{\rho-\delta_2}\sum_{j\in\mathbb{Z}}\sum_{*}\sum_{**}\left|k_jK_j'-k_j'K_j\right|\\
&&\times e^{\rho\left(\sum_{n\in\mathbb{Z}}\left(2a_n+k_n+k_n'\right)
\ln^{\sigma}\lfloor n\rfloor-2\ln^{\sigma}\lfloor n_1^*\rfloor\right)}\\
&&\times e^{\rho\left(\sum_{n\in\mathbb{Z}}\left(2A_n+K_n+K_n'\right)
\ln^{\sigma}\lfloor n\rfloor-2\ln^{\sigma}\lfloor N_1^*\rfloor\right)}\\
\nonumber&&\times e^{-\frac12\delta_1
\left(\sum_{i\geq 3}\ln^{\sigma}\lfloor n_i^*\rfloor\right)}e^{-\frac12\delta_2
\left(\sum_{i\geq 3}\ln^{\sigma}\lfloor N_i^*\rfloor\right)}\\
\nonumber&=&||R_1||_{\rho-\delta_1}||R_2||_{\rho-\delta_2}e^{\rho\left(\sum_{n\in\mathbb{Z}}
\left(2\alpha_n+\kappa_n+\kappa_n'\right)\ln^{\sigma}\lfloor n\rfloor -2\ln^{\sigma}\lfloor \nu_1^*\rfloor\right)}\\
&&\times\sum_{j\in\mathbb{Z}}\sum_{*}\sum_{**}\left|k_jK_j'-k_j'K_j\right| e^{2\rho\left(\ln^{\sigma}\lfloor j\rfloor+\ln^{\sigma}\lfloor\nu_1^*\rfloor-\ln^{\sigma}\lfloor n_1^*\rfloor-\ln^{\sigma}\lfloor N_1^*\rfloor\right)}\\
&&\times e^{-\frac12\delta_1
\left(\sum_{i\geq 3}\ln^{\sigma}\lfloor n_i^*\rfloor\right)}e^{-\frac12\delta_2
\left(\sum_{i\geq 3}\ln^{\sigma}\lfloor N_i^*\rfloor\right)}.
\end{eqnarray*}
To show (\ref{042704}) holds, it suffices to prove
\begin{equation}\label{041802}
I\leq \frac{1}{\delta_2}\exp\left\{\frac{1000}{\delta_1}\cdot\exp\left\{\left(\frac{100}{\delta_1}\right)^{\frac1{\sigma-1}}\right\}\right\},
\end{equation}
where
\begin{eqnarray*}
I&=&\sum_{j\in\mathbb{Z}}\sum_{*}\sum_{**}\left|k_jK_j'-k_j'K_j\right| e^{2\rho\left(\ln^{\sigma}\lfloor j\rfloor+\ln^{\sigma}\lfloor\nu_1^*\rfloor-\ln^{\sigma}\lfloor n_1^*\rfloor-\ln^{\sigma}\lfloor N_1^*\rfloor\right)}\\
&&\times e^{-\frac12\delta_1
\left(\sum_{i\geq 3}\ln^{\sigma}\lfloor n_i^*\rfloor\right)}e^{-\frac12\delta_2
\left(\sum_{i\geq 3}\ln^{\sigma}\lfloor N_i^*\rfloor\right)}.
\end{eqnarray*}

Now we will prove the inequality (\ref{041802}) holds in the following two cases:

 $\textbf{Case. 1.} \ \nu_1^*\leq N_1^*$.

$\textbf{Case. 1.1.}\ |j|\leq n_3^*\Rightarrow\lfloor j\rfloor\leq \lfloor n_3^*\rfloor.$

 Using $0<\delta_1\leq \frac14 \rho$, one has
\begin{eqnarray}
e^{2\rho\left(\ln^{\sigma}\lfloor j\rfloor-\ln^{\sigma}\lfloor n_1^*\rfloor\right)}
\leq e^{2\rho\left(\ln^{\sigma}\lfloor n_3^*\rfloor-\ln^{\sigma}\lfloor n_1^*\rfloor\right)}
\leq e^{\frac12\delta_1\left(\ln^{\sigma}\lfloor n_3^*\rfloor-\ln^{\sigma}\lfloor n_1^*\rfloor\right)}\label{051001},
\end{eqnarray}
and then
\begin{eqnarray}
e^{2\rho\left(\ln^{\sigma}\lfloor j\rfloor+\ln^{\sigma}\lfloor \nu_1^*\rfloor-\ln^{\sigma}\lfloor n_1^*\rfloor-\ln^{\sigma}\lfloor N_1^*\rfloor\right)}e^{-\frac12\delta_1\left(\sum_{i\geq 3}\ln^{\sigma}\lfloor n_i^*\rfloor\right)}
%&\leq&\nonumber e^{\frac12\delta_1\left(\ln^{\sigma}\lfloor n_3^*\rfloor-\ln^{\sigma}\lfloor n_1^*\rfloor\right)}
%e^{-\frac12\delta_1\left(\sum_{i\geq 3}\ln^{\sigma}\lfloor n_i^*\rfloor\right)}\\
\leq e^{-{\frac16\delta_1}\left(\sum_{i\geq 1}\ln^{\sigma}\lfloor n_i^*\rfloor\right)}\label{042603},
\end{eqnarray}
which follows from (\ref{041801}) and (\ref{051001}).

Note that if $j,a,k,k'$ are specified, and then $A,K,K'$ are uniquely determined.
%Hence one has
%\begin{equation}\label{030201}\sum_{a,k,k'\in\mathbb{N}^{\mathbb{Z}}}\sum_{j}=\sum_{*}\sum_{**}\sum_{j}.
%\end{equation}
In view of (\ref{042606}) and (\ref{042603}), we have
\begin{eqnarray*}
I
&\leq&\sum_{a,k,k'\in\mathbb{N}^{\mathbb{Z}}}2\left(\sum_{i\geq 1}\ln^{\sigma}\lfloor n_i^*\rfloor\right)\left(\sum_{i\geq 3}\ln^{\sigma}\lfloor N_i^*\rfloor\right)e^{-\frac16\delta_1\left(\sum_{i\geq 1}\ln^{\sigma}\lfloor n_i^*\rfloor\right)}e^{-\frac12\delta_2\left(\sum_{i\geq 3}\ln^{\sigma}\lfloor N_i^*\rfloor\right)}\\
%&=&2\sum_{a,k,k'\in\mathbb{N}^{\mathbb{Z}}}\left(\sum_{i\geq 1}\ln^{\sigma}\lfloor n_i^*\rfloor
%\right)e^{-\frac16\delta_1\left(\sum_{i\geq 1}\ln^{\sigma}\lfloor n_i^*)\rfloor\right)}\left(\sum_{i\geq 3}\ln^{\sigma}\lfloor N_i^*\rfloor \right)e^{-\frac12\delta_2\sum_{i\geq 3}\ln^{\sigma}\lfloor N_i^*\rfloor}\\
&\leq&\frac{48}{e^2\delta_1\delta_2}\sum_{a,k,k'\in\mathbb{N}^{\mathbb{Z}}}
e^{-\frac1{12}\delta_1\left(\sum_{i\geq 1}\ln^{\sigma}\lfloor n_i^*\rfloor\right)}\qquad \mbox{(in view of (\ref{042805*}) with $p=1$)}\\
&=&\frac{48}{e^2\delta_1\delta_2}\sum_{a,k,k'\in\mathbb{N}^{\mathbb{Z}}}
e^{-\frac1{12}\delta_1\left(\sum_{n\in\mathbb{Z}}\left(2a_n+k_n+k'_n\right)\ln^{\sigma}\lfloor n\rfloor\right)}\\
&\leq&\frac{48}{e^2\delta_1\delta_2}\left(\sum_{a\in\mathbb{N}^{\mathbb{Z}}}
e^{-\frac1{12}\delta_1\sum_{n\in\mathbb{Z}}2a_n\ln^{\sigma}\lfloor n\rfloor}\right)
\left(\sum_{k\in\mathbb{N}^{\mathbb{Z}}}e^{-\frac1{12}\delta_1\sum_{n\in\mathbb{Z}}k_n\ln^{\sigma}\lfloor n\rfloor}\right)^2
\\
&\leq&\frac{48}{e^2\delta_1\delta_2}\prod_{n\in\mathbb{Z}}\left({1-e^{-\frac16\delta_1\ln^{\sigma}\lfloor n\rfloor}}\right)^{-1}
\left({1-e^{-\frac1{12}\delta_1\ln^{\sigma}
\lfloor n\rfloor}}\right)^{-2}      \\
&&\nonumber\mbox{(which is based on Lemma \ref{a3})}\\
&\leq&\frac{48}{e^2\delta_1\delta_2}\left(\exp\left\{{\frac{216}{\delta_1}\cdot \exp\left\{{\left(\frac{48}{\delta_1} \right)}^{\frac{1}{\sigma-1}}\right\}}\right\}\right)^{3}\ \ \mbox{(in view of (\ref{122401}))}\\
&\leq&\frac{1}{\delta_2}\exp\left\{{\frac{1000}{\delta_1}\cdot \exp\left\{{\left(\frac{48}{\delta_1} \right)}^{\frac{1}{\sigma-1}}\right\}}\right\}.
\end{eqnarray*}

$\textbf{Case. 1.2.}\ |j|>n_3^*\Rightarrow|j|\in\{n_1^*,n_2^*\}.$

If $2a_j+k_j+k'_j>2$, then $|j|\leq n_3^*$, we are in $\textbf{Case. 1.1.}$. Hence in what follows, we always assume
\begin{equation*}
2a_j+k_j+k'_j\leq2,
\end{equation*}
which implies
\begin{equation}\label{2.24}
k_j+k_j'\leq 2.
\end{equation}
Denote $(n_i)_{i\geq1}$ by the system $\left\{n:\mbox{where $n$ is repeated $2a_n+k_n+k_n'$ times}\right\}$. Without loss of generality, we assume that $n_1=n_1^*$ and $n_2=n_2^*$ below.
From (\ref{031901}), (\ref{2.24}) and $j\in\{n_1,n_2\}$, it follows that
\begin{equation*}
I\nonumber
\nonumber\leq2\sum_{a,k,k'\in\mathbb{N}^{\mathbb{Z}}}\left(K_{n_1}+K_{n_1}'+K_{n_2}+K_{n_2}'\right) \cdot e^{-\frac12\left(\delta_1\left(\sum_{i\geq 3}\ln^{\sigma}\lfloor n_i^*\rfloor\right){+\delta_2\left(\sum_{i\geq 3}\ln^{\sigma}\lfloor N_i^*\rfloor\right)}\right)}.
\end{equation*}
In view of (\ref{042605}) and  (\ref{052802}), we have
\begin{eqnarray}
\sum_{n\in\mathbb{Z}}\left(2\alpha_n+\kappa_n+\kappa'_n\right)
\leq\label{042701}2\left(\sum_{i\geq3}\ln^{\sigma}\lfloor n_{i}^*\rfloor\right)+2\left(\sum_{i\geq3}\ln^{\sigma}\lfloor N_{i}^{*}\rfloor\right).
\end{eqnarray}
Moreover, note that  $\forall j$,
\begin{equation}\label{030520}
 K_j+K'_j\leq \kappa_j+\kappa'_j-k_j-k'_j+2\leq\kappa_j+\kappa'_j+2.
\end{equation}
Using (\ref{042701}) and (\ref{030520}), one has
\begin{eqnarray}
\nonumber I &\leq& 2\sum_{a,k,k'\in\mathbb{N}^{\mathbb{Z}}}\left(\kappa_{n_1}+\kappa'_{n_1}+\kappa_{n_2}+\kappa'_{n_2}+4\right)  \\
&&\label{2.26}\times e^{-\frac14\delta_1\left(\sum_{i\geq 3}\ln^{\sigma}\lfloor n_i^*\rfloor\right)}e^{-\frac18\delta\sum_{n\in\mathbb{Z}}(2\alpha_n+\kappa_n+\kappa'_n)},
\end{eqnarray}
where $\delta=\min\{\delta_1,\delta_2\}$.

\begin{rem}\label{042703} Firstly, note that $\{n_1,n_2\}\cap \mathrm{supp}\ \mathcal{M}_{\alpha\kappa\kappa'}\neq \emptyset$. Thus $n_1$  (or $n_2$) ranges in a set of cardinality no more than \begin{equation}\label{042702}\#\mathrm{supp} \ \mathcal{M}_{\alpha\kappa\kappa'}\leq\sum_{n\in\mathbb{Z}}(2\alpha_n+\kappa_n+\kappa'_n).
\end{equation}Secondly, if $\{n_i\}_{i\geq 3}$ and $n_1$ (resp. $n_2$) is specified, then $n_2$ (resp. $n_1$) is determined uniquely. Thirdly, if $\{n_i\}_{i\geq 1}$ is given, then $\{2a_n+k_n+k'_n\}_{n\in\mathbb{Z}}$ is specified, and hence $(a,k,k')$ is specified up to a factor of
\begin{equation}\label{030203}
\prod_{n\in\mathbb{Z}}\left(1+l_n^2\right),
\end{equation}
where
$$l_n=\#\{j:n_j=n\}.$$
\end{rem}
Since $n_1\geq n_2>n_3^*$, one has
\begin{equation}\label{030202}
\prod_{n\in\mathbb{Z}\atop n=n_1,n_2}\left(1+l_n^2\right)\leq 5.
\end{equation}
Following (\ref{2.26})-(\ref{030202}), we thus obtain
\begin{eqnarray}
\nonumber I
%\nonumber&\leq &
% 10\sum_{\{n_i\}_{i\geq3}}\prod_{m\in\mathbb{Z}\atop |m|\leq n_3^*}\left(1+l_m^2\right)\left(\sum_{n_1}\left(\kappa_{n_1}+\kappa'_{n_1}+\kappa_{n_2}+\kappa_{n_2}'+4\right)\right) \\
%\nonumber&&\times e^{-\frac14\delta_1\left(\sum_{i\geq 3}\ln^{\sigma}\lfloor n_i^*\rfloor\right)}e^{-\frac18\delta\sum_{n\in\mathbb{Z}}(2\alpha_n+\kappa_n+\kappa'_n)}\\
\nonumber
\nonumber&\leq& 60\sum_{\{n_i\}_{i\geq3}}\prod_{m\in\mathbb{Z}\atop |m|\leq n_3^*}\left(1+l_m^2\right)e^{-\frac14\delta_1\left(\sum_{i\geq 3}\ln^{\sigma}\lfloor n_i^*\rfloor\right)}\\
\nonumber&&\times\left(\sum_{n\in\mathbb{Z}}(2\alpha_n+\kappa_{n}+\kappa'_{n})\right)e^{-\frac18\delta\sum_{n\in\mathbb{Z}}\left(2\alpha_n+\kappa_n+\kappa'_n\right)}\\
&\leq & \nonumber\frac{480}{e\delta}\sum_{\{n_i\}_{i\geq3}}
\prod_{m\in\mathbb{Z}\atop |m|\leq n_3^*}\left(1+l_m^2\right)e^{-\frac14\delta_1\left(\sum_{i\geq 3}\ln^{\sigma}\lfloor n_i^*\rfloor\right)}\quad \mbox{(by (\ref{042805*}))}\\
\nonumber&\leq&\frac{480}{e\delta}\sup_{\{n_i\}_{i\geq3}}\left(\prod_{m\in\mathbb{Z}\atop|m|\leq n_3^*}\left(1+l_m^2\right)
e^{-\frac18\delta_1\sum_{i\geq3}\ln^{\sigma}\lfloor n_i^*\rfloor}\right)
\\&&\times\sum_{\{n_i\}_{i\geq3}}
e^{-\frac18\delta_1\sum_{i\geq3}\ln^{\sigma}\lfloor n_i^*\rfloor}\label{031910}.
\end{eqnarray}
By (\ref{042807}), one has
\begin{equation}\label{031911}
\sup_{\{n_i\}_{i\geq3}}\left(\prod_{m\in\mathbb{Z}\atop|m|\leq n_3^*}\left(1+l_m^2\right)
e^{-\frac18\delta_1\sum_{i\geq3}\ln^{\sigma}\lfloor n_i^*\rfloor}\right)\leq \exp\left\{{6\left(\frac{32}{\delta_1}\right)^{\frac{1}{\sigma-1}}\cdot \exp{\left(\frac{16}{\delta_1}\right)^{\frac1\sigma}}}
\right\}.
\end{equation}
In view of  (\ref{041809}) and (\ref{122401}), we have
\begin{equation}\label{031912}
\sum_{\{n_i\}_{i\geq3}}
e^{-\frac18\delta_1\sum_{i\geq3}\ln^{\sigma}\lfloor n_i^*\rfloor}\leq \exp\left\{\frac{144}{\delta_1}\cdot\exp\left\{\left(\frac{32}{\delta_1}\right)^{\frac1{\sigma-1}}\right\}\right\}.
\end{equation}
By (\ref{031910}), (\ref{031911}) and (\ref{031912}), we finish the proof of (\ref{041802}).

$\textbf{Case. 2.}\ \nu_1^*>N_1^*.$

In view of (\ref{041801}), one has $n_1^*=\nu_1^*$. Hence,  $n_2$ is determined by $n_1$ and $\{n_i\}_{i\geq 3}$. Similar as $\textbf{Case 1.2}$, we have
\begin{eqnarray*}
I
&\leq& \frac{1}{\delta_2}\exp\left\{\frac{1000}{\delta_1}\cdot\exp\left\{\left(\frac{100}{\delta_1}\right)^{\frac1{\sigma-1}}\right\}\right\}.
\end{eqnarray*}
\end{proof}

Next, we will estimate the symplectic transformation $\Phi_F$ induced by the Hamiltonian function $F$. Actually, we have
\begin{lem}(\textbf{Hamiltonian Flow})\label{E1}
Let $\sigma>2,\rho>0$ and
$$0<\delta<\min \left\{\frac14\rho,3-2\sqrt{2}\right\}.$$ Assume further \begin{equation}\label{042801}\frac{2e}{\delta}\exp\left\{\frac{2000}{\delta}\cdot\exp\left\{\left(\frac{200}{\delta}\right)^{\frac1{\sigma-1}}\right\}\right\}\left\|F\right\|_{\rho-\delta} <\frac 12.
\end{equation}
Then for any Hamiltonian function $H$, we get
\begin{equation}
\left\|H\circ\Phi_F\right\|_\rho
\leq\left(1+\frac{4e}{\delta}\exp\left\{\frac{2000}{\delta}\cdot\exp\left\{\left(\frac{200}{\delta}\right)^{\frac1{\sigma-1}}\right\}\right\}||F||_{\rho-\delta}\right)
||H||_{\rho-\delta}.
\end{equation}
\end{lem}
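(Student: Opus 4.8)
The plan is to expand $H\circ\Phi_F$ as a Taylor (Lie) series in the Poisson brackets with $F$ and estimate term by term, using the Poisson bracket estimate of Lemma \ref{010}. Recall that $\Phi_F=\Phi_F^{1}$ is the time-one map of the Hamiltonian flow generated by $F$, so formally
\begin{equation*}
H\circ\Phi_F=\sum_{m\geq0}\frac1{m!}H^{(m)},\qquad H^{(0)}=H,\quad H^{(m)}=\{H^{(m-1)},F\}.
\end{equation*}
The strategy is to choose a geometric sequence of weights $\rho-\delta=\rho_0<\rho_1<\cdots<\rho_m<\cdots<\rho$, say $\rho_j=\rho-\delta+\frac{j}{m}\delta$ or, more conveniently for an infinite series, $\rho-\rho_j=\delta/2^{j+1}$ together with a final loss; the standard trick is to estimate $H^{(m)}$ at weight $\rho$ by applying Lemma \ref{010} $m$ times, splitting the total weight loss $\delta$ into $m$ equal pieces of size $\delta/m$, so that at the $j$-th application both arguments lose $\delta/m$.

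First I would fix $m$ and apply Lemma \ref{010} iteratively with $\delta_1=\delta_2=\delta/m$ at each of the $m$ steps. Each application costs a factor
\begin{equation*}
\frac{m}{\delta}\exp\left\{\frac{1000m}{\delta}\cdot\exp\left\{\left(\frac{100m}{\delta}\right)^{\frac1{\sigma-1}}\right\}\right\}\|F\|_{\rho-\delta},
\end{equation*}
so after $m$ steps one gets $\|H^{(m)}\|_\rho$ bounded by $\|H\|_{\rho-\delta}$ times the $m$-th power of this factor. The key point is that although the constant grows with $m$, the $\frac1{m!}$ in the Lie series beats it: one has $\frac{1}{m!}\left(\frac{m}{\delta}C_m\right)^m\|F\|_{\rho-\delta}^m$ with $C_m=\exp\{\cdots\}$, and using $m!\geq (m/e)^m$ the $m^m$ cancels, leaving $\left(\frac{e}{\delta}C_m\right)^m\|F\|_{\rho-\delta}^m$. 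Here I would bound $C_m$ crudely: since $\left(\frac{100m}{\delta}\right)^{1/(\sigma-1)}\leq m\cdot\left(\frac{100}{\delta}\right)^{1/(\sigma-1)}$ is far too lossy, the cleaner route (and the one matching the stated constants) is to observe that only $m\leq$ (something comparable to the inverse of $\|F\|$) actually contributes before the geometric factor $\left(\frac{e}{\delta}C\,\|F\|_{\rho-\delta}\right)^m$ becomes summable — but in fact the simplest honest bound is to use $\delta/m$ only once: namely replace the equal-splitting by telescoping weights $\rho_j$ with $\rho-\rho_j\searrow 0$, absorbing $\sum_j (\rho_j-\rho_{j-1})=\delta$, and at the $j$-th Poisson bracket use $\delta_1=\rho_j-\rho_{j-1}$, $\delta_2=\delta$. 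With $\rho_j-\rho_{j-1}\geq \delta/2^{j}$ absorbed into the $1/\delta_1$-type constant this again produces a convergent series, and the uniform bound $2e\delta^{-1}\exp\{2000\delta^{-1}\exp\{(200\delta^{-1})^{1/(\sigma-1)}\}\}$ in hypothesis (\ref{042801}) is precisely the per-step factor one needs so that the series is dominated by a geometric series with ratio $<1/2$.

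Concretely: writing $M=\frac{2e}{\delta}\exp\{\frac{2000}{\delta}\cdot\exp\{(\frac{200}{\delta})^{1/(\sigma-1)}\}\}$, I would show by induction on $m$ that $\frac1{m!}\|H^{(m)}\|_\rho\leq (M\|F\|_{\rho-\delta})^m\|H\|_{\rho-\delta}$ (the extra powers of $2$ and $e$ in $M$ relative to the bare constant $\frac1{\delta_2}\exp\{\frac{1000}{\delta_1}\cdot\exp\{(\frac{100}{\delta_1})^{1/(\sigma-1)}\}\}$ of Lemma \ref{010} are there exactly to absorb the $1/m!$ bookkeeping and the dyadic weight splitting, since $\delta_1=\delta/m$ turns $\exp\{1000/\delta_1\}$ into $\exp\{1000m/\delta\}$ whose $m$-th root against $1/m!$ needs the doubled exponent). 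Then
\begin{equation*}
\|H\circ\Phi_F\|_\rho\leq\sum_{m\geq0}(M\|F\|_{\rho-\delta})^m\|H\|_{\rho-\delta}=\frac{\|H\|_{\rho-\delta}}{1-M\|F\|_{\rho-\delta}}\leq\left(1+2M\|F\|_{\rho-\delta}\right)\|H\|_{\rho-\delta},
\end{equation*}
where the last step uses (\ref{042801}), i.e. $M\|F\|_{\rho-\delta}<1/2$, and $\frac1{1-x}\leq1+2x$ for $x<1/2$; noting $2M=\frac{4e}{\delta}\exp\{\cdots\}$ gives exactly the claimed inequality. The main obstacle is the careful bookkeeping of how the weight loss $\delta$ is partitioned among the $m$ nested Poisson brackets so that the resulting constants, after division by $m!$, sum to a geometric series — i.e. verifying that the loss in the $\exp\{1000/\delta_1\}$ factor when $\delta_1\sim\delta/m$ is still controlled by $1/m!$; this is where the doubled constants ($2000$ and $200$ in place of $1000$ and $100$, and the factor $2e$) in the hypothesis are consumed, and getting this tracking right is the only genuinely delicate part of the proof.
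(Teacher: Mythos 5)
Your overall architecture is the same as the paper's: expand $H\circ\Phi_F$ as the Lie series $\sum_{n\geq0}\frac1{n!}H^{(n)}$, estimate $H^{(n)}$ by $n$ applications of Lemma \ref{010}, kill the resulting $n^n$ with $n!\geq (n/e)^n$, and sum the geometric series using (\ref{042801}) together with $\frac{1}{1-x}\leq 1+2x$ for $x<\frac12$. The end-game and the final constants you write down are exactly right. But the step you yourself flag as the only genuinely delicate part --- how to partition the weight loss $\delta$ among the $n$ nested brackets --- fails in every variant you propose, and your explanation of where the doubled constants $2000$, $200$ and the factor $2e$ come from is not correct. The crucial feature of Lemma \ref{010} is its \emph{asymmetry}: the constant is only $1/\delta_2$ in $\delta_2$, but doubly exponential, $\exp\left\{\frac{1000}{\delta_1}\exp\left\{\left(\frac{100}{\delta_1}\right)^{1/(\sigma-1)}\right\}\right\}$, in $\delta_1$. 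Hence the equal splitting $\delta_1=\delta_2=\delta/n$ gives a per-step factor whose $n$-th power is $\exp\left\{\frac{1000n^2}{\delta}\exp\left\{\left(\frac{100n}{\delta}\right)^{1/(\sigma-1)}\right\}\right\}$; this is nowhere near controlled by $1/n!\sim e^{-n\ln n}$, and no doubling of constants repairs it. Your dyadic alternative $\delta_1=\rho_j-\rho_{j-1}\sim\delta/2^{j}$, $\delta_2=\delta$ is worse still: $\exp\left\{1000\cdot 2^{j}/\delta\cdot\exp\{\cdots\}\right\}$ diverges doubly exponentially in $j$, and with $\delta_2=\delta$ the other argument exhausts the entire analyticity band at the first step (and $\|F\|_{\rho_j-\delta}\geq\|F\|_{\rho-\delta}$, the wrong direction). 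You are implicitly treating the bracket constant as polynomial in $1/\delta_1$, as in the standard KAM composition lemma, which it is not here.

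The correct bookkeeping --- and what the paper actually does --- exploits the asymmetry the other way around: at each of the $n$ applications put $F$ in the $\delta_1$ slot with the \emph{fixed} loss $\delta_1=\delta/2$ (this is precisely where $1000\mapsto 2000$ and $100\mapsto 200$ come from), and put $H^{(j)}$ in the $\delta_2$ slot with the shrinking loss $\delta_2=\delta/(2n)$, so that the total loss along the $H$-chain is $\delta/2$ and every copy of $F$ sits at a weight $\geq\rho-\delta$. The per-step constant is then $\frac{2n}{\delta}\exp\left\{\frac{2000}{\delta}\exp\left\{\left(\frac{200}{\delta}\right)^{1/(\sigma-1)}\right\}\right\}$, uniform over the $n$ steps, and only the harmless factor $(2n/\delta)^n$ depends on $n$; that is what $1/n!$ beats, yielding $\left(\frac{2e}{\delta}\exp\{\cdots\}\|F\|_{\rho-\delta}\right)^n$, after which your concluding geometric-series argument goes through verbatim.
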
\begin{proof}
 Firstly,  we expand $H\circ\Phi_F$ into the Taylor series
 \begin{equation}\label{3.2}
 H\circ\Phi_F=\sum_{n\geq 0}\frac{1}{n!}H^{(n)},
 \end{equation}
where $H^{(n)}=\{H^{(n-1)},F\}$ and $H^{(0)}=H$.

We will estimate $\left\|H^{(n)}\right\|_\rho$ by using Lemma \ref{010} again and again:
\begin{eqnarray}
\nonumber&&\left\|H^{(n)}\right\|_\rho\\
%\nonumber&=&\left\|\left\{H^{(n-1)},F\right\}\right\|_\rho\\
\nonumber&\leq&\left(\exp\left\{\frac{2000}{\delta}\cdot\exp\left\{\left(\frac{200}{\delta}\right)^{\frac1{\sigma-1}}\right\}\right\}\left\|F\right\|_{\rho-\frac{\delta}{2}}\right)\left(\frac{2n}{\delta}\right)\left\|H^{(n-1)}\right\|_{\rho-\frac{\delta}{2n}}\\
\nonumber&\leq&\left(\exp\left\{\frac{2000}{\delta}\cdot\exp\left\{\left(\frac{200}{\delta}\right)^{\frac1{\sigma-1}}\right\}\right\}
\left\|F\right\|_{\rho-\frac{\delta}{2n}-\frac{\delta}2}\right)^2\left(\frac{2n}{\delta}\right)^2\left\|H^{(n-2)}\right\|_{\rho-\frac{2\delta}{2n}}\\
&&\dots\nonumber\\
\nonumber\label{3.3}&\leq&\left(\exp\left\{\frac{2000}{\delta}\cdot\exp\left\{\left(\frac{200}{\delta}\right)^{\frac1{\sigma-1}}\right\}\right\}
\left\|F\right\|_{\rho-\delta}\right)^n\left(\frac{2n}{\delta}\right)^n\left\|H\right\|_{\rho-\frac{\delta}2}\\
\label{3.3}&\leq&\left(\exp\left\{\frac{2000}{\delta}\cdot\exp\left\{\left(\frac{200}{\delta}\right)^{\frac1{\sigma-1}}\right\}\right\}
\left\|F\right\|_{\rho-\delta}\right)^n\left(\frac{2n}{\delta}\right)^n\left\|H\right\|_{\rho-{\delta}}.
\end{eqnarray}
In view of (\ref{3.2}) and (\ref{3.3}), one has
\begin{eqnarray*}
&&\left\|H\circ\Phi_F\right\|_\rho\\
&\leq&\sum_{n\geq 0}\frac{1}{n!}\left(\exp\left\{\frac{2000}{\delta}\cdot\exp\left\{\left(\frac{200}{\delta}\right)^{\frac1{\sigma-1}}\right\}\right\}
\left\|F\right\|_{\rho-\delta}\right)^{n}\left(\frac{2n}{\delta}\right)^n\left\|H\right\|_{\rho-\delta}\\
&=&\sum_{n\geq 0}\frac{n^n}{n!}\left(\frac{2}{\delta}\exp\left\{\frac{2000}{\delta}\cdot\exp\left\{\left(\frac{200}{\delta}\right)^{\frac1{\sigma-1}}\right\}\right\}
\left\|F\right\|_{\rho-\delta}\right)^{n}\left\|H\right\|_{\rho-\delta}\\
&\leq&\sum_{n\geq 0}\left(\frac{2e}{\delta}\exp\left\{\frac{2000}{\delta}\cdot\exp\left\{\left(\frac{200}{\delta}\right)^{\frac1{\sigma-1}}\right\}\right\}
||F||_{\rho-\delta}\right)^{n}||H||_{\rho-\delta}\\
&&(\mbox{in view of $n^n<n!e^n$)}\\
&\leq & \left(1+\frac{4e}{\delta}\exp\left\{\frac{2000}{\delta}\cdot\exp\left\{\left(\frac{200}{\delta}\right)^{\frac1{\sigma-1}}\right\}\right\}||F||_{\rho-\delta}\right)
||H||_{\rho-\delta}.
\end{eqnarray*}
\end{proof}
\begin{lem}(\textbf{Hamiltonian Vector Field})\label{063004}
Given a Hamiltonian
\begin{equation}\label{050902}
R(q,\bar q)=\sum_{a,k,k'\in\mathbb{N}^{\mathbb{Z}}}B_{akk'}\mathcal{M}_{akk'},
\end{equation}
then for any $\sigma>2$ and $\rho\in(0,3-2\sqrt{2})$, one has
\begin{equation}\label{050907}
\sup_{||q||_{\sigma,\infty}<1}||X_R||_{\sigma,\infty}\leq \exp\left\{\frac{100}{\rho}\cdot\exp\left\{\left(\frac{10}{\rho}\right)^{\frac{1}{\sigma-1}}\right\}\right\}||H||_{\rho}.
\end{equation}\color{black}
\end{lem}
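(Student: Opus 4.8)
The plan is to bound the components of $X_R$ directly from the monomial expansion of $R$. Up to the factor $\mathbf{i}$, these components are the two sequences $(\partial R/\partial\bar q_m)_{m\in\mathbb{Z}}$ and $(\partial R/\partial q_m)_{m\in\mathbb{Z}}$; since $R$ is a real Hamiltonian (so $\overline{B_{akk'}}=B_{ak'k}$) the involution $q\leftrightarrow\bar q$ interchanges these families and they have equal modulus, so it suffices to estimate $\sup_m\big|\partial R/\partial\bar q_m\big|\,e^{\ln^{\sigma}\lfloor m\rfloor}$ over all $q$ with $\|q\|_{\sigma,\infty}<1$. Fix $m$; only monomials with $k'_m\ge1$ contribute, and $\partial\mathcal{M}_{akk'}/\partial\bar q_m=k'_m\,I_m(0)^{a_m}q_m^{k_m}\bar q_m^{k'_m-1}\prod_{n\ne m}I_n(0)^{a_n}q_n^{k_n}\bar q_n^{k'_n}$.

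The first step is a bound for a single term. Set $S=\sum_n(2a_n+k_n+k'_n)\ln^{\sigma}\lfloor n\rfloor$. Using $|q_n|\le e^{-\ln^{\sigma}\lfloor n\rfloor}$ (valid since $\|q\|_{\sigma,\infty}<1$) together with $I_n(0)\le e^{-2\ln^{\sigma}\lfloor n\rfloor}$ from (\ref{M13}), one gets $\big|\partial\mathcal{M}_{akk'}/\partial\bar q_m\big|\,e^{\ln^{\sigma}\lfloor m\rfloor}\le k'_m\,e^{2\ln^{\sigma}\lfloor m\rfloor-S}$, the factor $e^{2\ln^{\sigma}\lfloor m\rfloor}$ being the combined cost of differentiating out one $\bar q_m$ and then restoring the norm weight $e^{\ln^{\sigma}\lfloor m\rfloor}$. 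Bounding $|B_{akk'}|\le\|R\|_{\rho}\,e^{\rho(S-2\ln^{\sigma}\lfloor n_1^*\rfloor)}$ by Definition \ref{083103}, then using $|m|\le n_1^*$ and Lemma \ref{005} (equivalently $\ln^{\sigma}\lfloor n_1^*\rfloor-\ln^{\sigma}\lfloor n_2^*\rfloor\le\tfrac12\sum_{i\ge3}\ln^{\sigma}\lfloor n_i^*\rfloor$), the positive powers of $\ln^{\sigma}\lfloor n_1^*\rfloor$ coming from the weight are absorbed and one is left with
\[
|B_{akk'}|\cdot\Big|\frac{\partial\mathcal{M}_{akk'}}{\partial\bar q_m}\Big|\,e^{\ln^{\sigma}\lfloor m\rfloor}\le\|R\|_{\rho}\,k'_m\,e^{-\frac{1-\rho}{2}\sum_{i\ge3}\ln^{\sigma}\lfloor n_i^*\rfloor}.
\]

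The second and main step — and the chief obstacle — is to sum this over all $(a,k,k')$ with $k'_m\ge1$. The decay factor only involves the modes $n_3^*,n_4^*,\dots$, so the two largest modes $n_1^*,n_2^*$ carry no decay and cannot be summed freely; this is precisely the difficulty that arises in the proof of Lemma \ref{010}, and I would overcome it in the same way, by exploiting the momentum conservation (\ref{050901}). Arguing according to the position of $|m|$ among $n_1^*,n_2^*,n_3^*,\dots$ (the cases $|m|>n_3^*$ and $|m|\le n_3^*$, as in the proof of Lemma \ref{010}): once the tail $\{n_i^*\}_{i\ge3}$ and the signs carried by the copies of each mode are fixed, momentum conservation forces each of $n_1^*,n_2^*$ either to be determined by the remaining data or else to occur in a balanced block ($I_{n_1^*}(0)$ or $q_{n_1^*}\bar q_{n_1^*}$), in which case the factor $e^{-2\ln^{\sigma}\lfloor n_1^*\rfloor}$ present in $|\mathcal{M}_{akk'}|$ and not yet used compensates the loss $e^{2\ln^{\sigma}\lfloor m\rfloor}$ because $|m|<n_1^*$; this is the mechanism of Remark \ref{042703}. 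One is thereby reduced to a sum over $\{n_i^*\}_{i\ge3}$ with weight $e^{-\frac{1-\rho}{2}\sum_{i\ge3}\ln^{\sigma}\lfloor n_i^*\rfloor}$, multiplied (after recovering $(a,k,k')$ from the multiplicity pattern) by the combinatorial degeneracy $\prod_n(1+l_n^2)$, $l_n=\#\{i:n_i^*=n\}$, and by the polynomial factor $k'_m$, which is harmlessly absorbed into the exponential; these sums are controlled by the same product and counting estimates used in the proof of Lemma \ref{010} (cf.\ (\ref{122401})), which is why the result has the shape $\exp\{\frac{C}{\rho}\cdot\exp\{(\frac{C'}{\rho})^{1/(\sigma-1)}\}\}$. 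Keeping track of the numerical constants — and using $\rho\in(0,3-2\sqrt{2})$, which in particular gives $\rho<1$ so that the decays above are genuine — yields the stated bound with $C=100$ and $C'=10$; the components $\partial R/\partial q_m$ are handled identically, and taking $\sup_m$ finishes the proof.
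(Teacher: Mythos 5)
Your setup and your single--monomial estimate coincide with the paper's: differentiate the monomial, pay $e^{2\ln^{\sigma}\lfloor m\rfloor}$ for removing one factor and restoring the weight, bound the coefficient via Definition \ref{083103}, and use $\lfloor m\rfloor\le\lfloor n_1^*\rfloor$ together with Lemma \ref{005} to reach $\|R\|_{\rho}\,k_m'\,e^{-\frac{1-\rho}{2}\sum_{i\ge3}\ln^{\sigma}\lfloor n_i^*\rfloor}$. The gap is in the summation step, precisely in the case $|m|\le n_3^*$. Your claim that, once the tail $\{n_i^*\}_{i\ge3}$ and the signs are fixed, each of $n_1^*,n_2^*$ is either determined by the remaining data or sits in a balanced block is false there: momentum conservation (\ref{050901}) supplies only the single relation $\mu_1 n_1+\mu_2 n_2=-\sum_{i\ge3}\mu_i n_i$, so if both extremal modes occur unbalanced --- e.g.\ a monomial $q_{n_1}\bar q_{n_2}q_{n_3}\bar q_{n_4}q_{n_5}\bar q_{m}$ with $|m|$ small and $n_1-n_2$ fixed --- the pair $(n_1,n_2)$ still ranges over infinitely many values. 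Your tail-only bound is independent of $n_1,n_2$, so the sum over these monomials diverges; nor is there any ``not yet used'' factor $e^{-2\ln^{\sigma}\lfloor n_1^*\rfloor}$ left to invoke, since all of $e^{-\sum_n(2a_n+k_n+k_n')\ln^{\sigma}\lfloor n\rfloor}$ was already consumed in producing the tail bound through Lemma \ref{005}. (When $|m|>n_3^*$ your argument is fine and is exactly the paper's: $m$ is itself one of the two heads, the other head is pinned down by the tail, $k_m'\le2$, and the degeneracy is the factor $\prod_n(1+l_n^2)$.)

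The repair --- and the point where the paper's proof genuinely differs from your reduction --- is to go back to the exponent \emph{before} applying Lemma \ref{005} in the case $\lfloor m\rfloor\le\lfloor n_3^*\rfloor$: there one bounds $2\ln^{\sigma}\lfloor m\rfloor-2\rho\ln^{\sigma}\lfloor n_1^*\rfloor\le 2(1-\rho)\ln^{\sigma}\lfloor n_3^*\rfloor\le\frac{2(1-\rho)}{3}\sum_{i\ge1}\ln^{\sigma}\lfloor n_i^*\rfloor$, so the full exponent is at most $\frac{\rho-1}{3}\sum_{i\ge1}\ln^{\sigma}\lfloor n_i^*\rfloor$. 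Now \emph{every} mode, including the two heads, carries geometric decay; the factor $k_m'$ is absorbed using (\ref{042604}) and (\ref{042805*}), and the sum over all $(a,k,k')$ factorizes into the convergent products of (\ref{041809}) and (\ref{122401}) with no counting argument whatsoever. The counting mechanism you describe (one head determined from the other, multiplicity factor at most $5$ on the heads) is reserved exclusively for the complementary case $|m|>n_3^*$; the two cases require different single-term bounds, not the same one.
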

\begin{proof}
Fixed any $j\in\mathbb{Z}$, it suffices to estimate the upper bound for
\begin{equation*}
\left|\frac{\partial{R}}{\partial q_j}e^{\ln^{\sigma}\lfloor j\rfloor}\right|.
\end{equation*}In view of (\ref{050902}),
one has
\begin{equation*}
\frac{\partial{R}}{\partial q_j}=\sum_{a,k,k'\in\mathbb{N}^{\mathbb{Z}}}B_{akk'}\left(\prod_{n\neq j}I_n(0)^{a_n}q_n^{k_n}\bar{q}_n^{k_n'}\right)\left(k_jI_j(0)^{a_j}q_j^{k_j-1}\bar{q}_j^{k_j'}\right).
\end{equation*}
Based on (\ref{042602}), one has
\begin{equation}\label{050903}
\left|B_{akk'}\right|\leq \left\|R\right\|_{\rho}e^{\rho\left(\sum_{n\in\mathbb{Z}}\left(2a_n+k_n+k_n'\right)\ln^{\sigma}\lfloor n\rfloor-2\ln^{\sigma}\lfloor n_1^*\rfloor\right)}.
\end{equation}
Therefore, in view of $||q||_{\sigma,\infty}<1$, (\ref{M13}) and (\ref{050903}), one has
\begin{eqnarray}
&&\left|\frac{\partial{R}}{\partial q_j}e^{\ln^{\sigma}\lfloor j\rfloor}\right|\nonumber\\
\nonumber
&\leq&\nonumber||R||_{\rho}\left|\sum_{a,k,k'\in\mathbb{N}^{\mathbb{Z}}}k_je^{\rho\left(\sum_{n\in\mathbb{Z}}\left(2a_n+k_n+k_n'\right)\ln^{\sigma}\lfloor n\rfloor-2\ln^{\sigma}\lfloor n_1^*\rfloor\right)}e^{-\left(\sum_{n\in\mathbb{Z}}\left(2a_n+k_n+k_n'\right)\ln^{\sigma}\lfloor n\rfloor-2\ln^{\sigma}\lfloor j\rfloor\right)}\right|.
\end{eqnarray}
Then the estimate (\ref{050907}) will follow from
\begin{eqnarray}
\nonumber&&\left|\sum_{a,k,k'\in\mathbb{N}^{\mathbb{Z}}}k_je^{(\rho-1)\left(\sum_{n\in\mathbb{Z}}\left(2a_n+k_n+k_n'\right)\ln^{\sigma}\lfloor n\rfloor\right)-2\rho\ln^{\sigma}\lfloor n_1^*\rfloor+2\ln^{\sigma}\lfloor j\rfloor}\right|\\
&\leq&\exp\left\{\frac{100}{\rho}\cdot\exp\left\{\left(\frac{10}{\rho}\right)^{\frac{1}{\sigma-1}}\right\}\right\}\label{022302}.
\end{eqnarray}
Now we will estimate (\ref{022302}) in the following two cases:

\textbf{Case 1.} $\lfloor j\rfloor \leq \lfloor n_3^*\rfloor$.

Then one has
\begin{eqnarray*}
\left|\sum_{a,k,k'\in\mathbb{N}^{\mathbb{Z}}}k_je^{(\rho-1)\left(\sum_{n\in\mathbb{Z}}\left(2a_n+k_n+k_n'\right)\ln^{\sigma}\lfloor n\rfloor\right)-2\rho\ln^{\sigma}\lfloor n_1^*\rfloor+2\ln^{\sigma}\lfloor j\rfloor}\right|
\leq  \sum_{a,k,k'\in\mathbb{N}^{\mathbb{Z}}}k_je^{\frac{\rho-1}3\sum_{i\geq 1}\ln^{\sigma}\lfloor n_i^*\rfloor}.
\end{eqnarray*}
In view of (\ref{042604}), we have
\begin{eqnarray}
&&\nonumber \sum_{a,k,k'\in\mathbb{N}^{\mathbb{Z}}}k_je^{\frac{\rho-1}3\sum_{i\geq 1}\ln^{\sigma}\lfloor n_i^*\rfloor}\\
\nonumber&\leq&\sum_{a,k,k'\in\mathbb{N}^{\mathbb{Z}}}\left(\sum_{i\geq 1}\ln^{\sigma}\lfloor n_i^*\rfloor\right)e^{\frac{\rho-1}3\sum_{i\geq 1}\ln^{\sigma}\lfloor n_i^*\rfloor}\\
\nonumber&\leq&\frac{12}{e(1-\rho)}\sum_{a,k,k'\in\mathbb{N}^{\mathbb{Z}}}e^{\frac{\rho-1}4\sum_{i\geq 1}\ln^{\sigma}\lfloor n_i^*\rfloor
}\quad\mbox{(in view of (\ref{042805*}))}\\
\nonumber&\leq&12\prod_{n\in\mathbb{Z}}\left({1-e^{-\rho\ln^{\sigma}\lfloor n\rfloor}}\right)^{-1}\prod_{n\in\mathbb{Z}}
\left({1-e^{-\rho\ln^{\sigma}\lfloor n\rfloor}}\right)^{-2}\\
\nonumber&&  \ {\mbox{(in view of $\rho\in(0,3-2\sqrt{2})$ and (\ref{041809}))}}    \\
&\leq&\label{050904}12\exp\left\{\frac{54}{\rho}\cdot\exp\left\{\left(\frac{4}{\rho}\right)^{\frac{1}{\sigma-1}}\right\}\right\},
\end{eqnarray}
where the last inequality is based on (\ref{122401}).

\textbf{Case 2.} $\lfloor j\rfloor > \lfloor n_3^*\rfloor$.

If $k_j\geq 3$, then one has $\lfloor j\rfloor \leq \lfloor n_3^*\rfloor$ which is in \textbf{Case\ 1.}. Hence we always assume $k_j\leq 2$. Then using (\ref{001}), we have
\begin{eqnarray}
\nonumber&&\left|\sum_{a,k,k'\in\mathbb{N}^{\mathbb{Z}}}k_je^{(\rho-1)\left(\sum_{n\in\mathbb{Z}}\left(2a_n+k_n+k_n'\right)\ln^{\sigma}\lfloor n\rfloor\right)-2\rho\ln^{\sigma}\lfloor n_1^*\rfloor+2\ln^{\sigma}\lfloor j\rfloor}\right|\\
%\nonumber&\leq&2 \left|\sum_{a,k,k'\in\mathbb{N}^{\mathbb{Z}}}e^{\rho\sum_{i\geq 3}\ln^{\sigma}\lfloor n_i^*\rfloor}e^{-r\sum_{i\geq 3}\ln^{\sigma}\lfloor n_i^*\rfloor}\right|
\label{201606032}&\leq& 2\left|\sum_{a,k,k'\in\mathbb{N}^{\mathbb{Z}}}e^{\frac{\rho-1}2\sum_{i\geq 3}\ln^{\sigma}\lfloor n_i^*\rfloor}\right|.
\end{eqnarray}

Denote $(n_i)_{i\geq1}$ by the system $\left\{n:\mbox{where $n$ is repeated $2a_n+k_n+k_n'$ times}\right\}$.  Without loss of generality, we assume that $n_1=n_1^*$ and $n_2=n_2^*$. Note that $\{n_i\}_{i\geq 1}$ is given, then $\{2a_n+k_n+k'_n\}_{n\in\mathbb{Z}}$ is specified, and hence $(a,k,k')$ is specified up to a factor of
\begin{equation}\label{030204}\prod_{n\in\mathbb{Z}}\left(1+l_n^2\right),
\end{equation}
where
$$l_n=\#\{j:n_j=n\}.$$
In view of $\lfloor j\rfloor >\lfloor n_3^*\rfloor$ again, one has $j\in\left\{n_1,n_2\right\}$ and
\begin{equation*}
l_{n_1}+l_{n_2}\leq 2,
\end{equation*}
which implies
\begin{equation}\label{022301}
\prod_{n\in\mathbb{Z}\atop n=n_1,n_2}\left(1+l_n^2\right)\leq 5.
\end{equation}Furthermore if $(n_i)_{i\geq 3}$ and $j$ are given, $n_1$ and $n_2$ are uniquely determined. Then using (\ref{030204}) and (\ref{022301}) one has
\begin{eqnarray*}\label{050905}
\nonumber(\ref{201606032})
&\leq&\nonumber 10\left|\sum_{\left(n_i\right)_{i\geq3}}\prod_{n\in\mathbb{Z}\atop|n|\leq n_3^*}\left(1+l_n^2\right)e^{\frac{\rho-1}2\sum_{i\geq 3}\ln^{\sigma}\lfloor n_i^*\rfloor}\right| \\
\nonumber&\leq&10\left|\left(\sum_{(n_i)_{i\geq3}}e^{-\rho\sum_{i\geq 3}\ln^{\sigma}\lfloor n_i^*\rfloor}\right)\sup_{(n_i)_{i\geq3}}\left(\prod_{|n|\leq n_3^*}\left(1+l_n^2\right)e^{-\rho\sum_{i\geq 3}\ln^{\sigma}\lfloor n_i^*\rfloor}\right)\right|\\
\nonumber%&\leq&10\exp\left\{{3e^{\rho^{-\frac1\sigma}}}\left(\frac{2}{\rho\sigma}\right)^{\frac 2{\sigma-1}}\right\}\sum_{(n_i)_{i\geq3}}e^{-5\rho\sum_{i\geq 3}\ln^{\sigma}\lfloor n_i^*\rfloor}\ \ \mbox{(in view of (\ref{042807}))}\\
\nonumber
%&=&10\left(\frac{1}{\rho}\right)^{C_2{(\theta)}{\rho^{-\frac{1}{\theta}}}}\sum_{(l_n)_{|n|\leq n_3^*}}e^{-5\rho\sum_{|n|\leq n_3^*}l_n|n|^{\theta}}\\
\nonumber%&\leq&10\exp\left\{{3e^{\rho^{-\frac1\sigma}}}\left(\frac{2}{\rho\sigma}\right)^{\frac 2{\sigma-1}}\right\}\prod_{n\in\mathbb{Z}}\left(1-e^{-5\rho\ln^{\sigma}\lfloor n\rfloor^{\theta}}\right)^{-1}\ \ \mbox{(in view of (\ref{041809}))}\\
&\leq&\label{022303}10\exp\left\{\frac{18}{\rho}\cdot\exp\left\{\left(\frac{4}{\rho}\right)^{\frac{1}{\sigma-1}}\right\}\right\}
\cdot\exp\left\{6\left(\frac{2}{\rho}\right)^{\frac{1}{\sigma-1}}\cdot\exp\left\{\left(\frac{1}{\rho}\right)^{\frac1{\sigma}}\right\}\right\} ,
\end{eqnarray*}
where the last inequality is based on (\ref{041809}), (\ref{122401}) and (\ref{042807}).

In view of the above two cases, we finish the proof of (\ref{022302}).
\end{proof}
\section{KAM iteration}
\subsection{Derivation of homological equations}
The proof of Main Theorem employs the rapidly
converging iteration scheme of Newton type to deal with small divisor problems
introduced by Kolmogorov, involving the infinite sequence of coordinate transformations.
At the $s$-th step of the scheme, a Hamiltonian
$H_{s} = N_{s} + R_{s}$
is considered, as a small perturbation of some normal form $N_{s}$. A transformation $\Phi_{s}$ is
set up so that
$$H_{s}\circ \Phi_{s} = N_{s+1} + R_{s+1}$$
with another normal form $N_{s+1}$ and a much smaller perturbation $R_{s+1}$. We drop the index $s$ of $H_{s}, N_{s}, R_{s}, \Phi_{s}$ and shorten the index $s+1$ as $+$.

Now consider the Hamiltonian $H$ of the form
\begin{eqnarray}\label{N1}
{H}=N+R,
\end{eqnarray}
where
\begin{equation*}
N=\sum_{n\in\mathbb{Z}}\left(n^2+\widetilde V_n\right)|q_n|^2,
\end{equation*}
and
\begin{equation*}
R=R_0+R_1+R_2
\end{equation*}
with $\left|\widetilde V_n\right|\leq 2$ for all $n\in\mathbb{Z}$,
\begin{eqnarray*}
{R}_0&=&\sum_{a,k,k'\in\mathbb{N}^{\mathbb{Z}}\atop\mbox{supp}\ k\bigcap \mbox{supp}\ k'=\emptyset}B_{akk'}\mathcal{M}_{akk'},\\
{R}_1&=&\sum_{n\in\mathbb{Z}}J_n\left(\sum_{a,k,k'\in\mathbb{N}^{\mathbb{Z}}\atop\mbox{supp}\ k\bigcap \mbox{supp}\ k'=\emptyset}B_{akk'}^{(n)}\mathcal{M}_{akk'}\right),\\
{R}_2&=&\sum_{n_1,n_2\in\mathbb{Z}}J_{n_1}J_{n_2}\left(\sum_{a,k,k'\in\mathbb{N}^{\mathbb{Z}}\atop\mbox{no assumption}}B_{akk'}^{(n_1,n_2)}\mathcal{M}_{akk'}\right).
\end{eqnarray*}
We desire to eliminate the terms $R_0,R_1$ in (\ref{N1}) by the coordinate transformation $\Phi$, which is obtained as the time-1 map $X_F^{t}|_{t=1}$ of a Hamiltonian
vector field $X_F$ with $F=F_0+F_1$. Let ${F}_{0}$ (resp. ${F}_{1}$) has the form of ${R}_0$ (resp. ${R}_{1}$),
that is \begin{eqnarray}
&&{F}_0=\sum_{a,k,k'\in\mathbb{N}^{\mathbb{Z}}\atop\mbox{supp}\ k\bigcap \mbox{supp}\ k'=\emptyset}F_{akk'}\mathcal{M}_{akk'},\\
&&{F}_1=\sum_{n\in\mathbb{Z}}J_n\left(\sum_{a,k,k'\in\mathbb{N}^{\mathbb{Z}}\atop\mbox{supp}\ k\bigcap \mbox{supp}\ k'=\emptyset}F_{akk'}^{(n)}\mathcal{M}_{akk'}\right),
\end{eqnarray}
and the homological equations become
\begin{equation}\label{4.27}
\{N,{F}\}+R_0+R_{1}=[R_0]+[R_1],
\end{equation}
where
\begin{equation}\label{051501}
[R_0]=\sum_{a\in\mathbb{N}^{\mathbb{Z}}}B_{a00}\mathcal{M}_{a00},
\end{equation}
and
\begin{equation}\label{051502}
[R_1]=\sum_{n\in\mathbb{Z}}J_n\sum_{a\in\mathbb{N}^{\mathbb{Z}}}B_{a00}^{(n)}\mathcal{M}_{a00}.
\end{equation}
The solutions of the homological equations (\ref{4.27}) are given by
\begin{equation}\label{051304}
F_{akk'}=\frac{B_{akk'}}{\sum_{n\in\mathbb{Z}}\left(k_n-k^{'}_n\right)(n^2+\widetilde{V}_n)},
\end{equation}
where
\begin{equation}\label{051305}
F_{akk'}^{(m)}=\frac{B_{akk'}^{(m)}}{\sum_{n\in\mathbb{Z}}\left(k_n-k^{'}_n\right)(n^2+\widetilde{V}_n)},
\end{equation}
and the new Hamiltonian ${H}_{+}$ has the form
\begin{eqnarray}
H_{+}\nonumber&=&H\circ\Phi\\
&=&\nonumber N+\{N,F\}+R_0+R_1\\
&&\nonumber+\int_{0}^1\{(1-t)\{N,F\}+R_0+R_1,F\}\circ X_F^{t}\ \mathrm{d}{t}
+\nonumber R_2\circ X_F^1\\
&=&\label{051401}N_++R_+,
\end{eqnarray}
where
\begin{equation}\label{051402}
N_+=N+[R_0]+[R_1],
\end{equation}
and
\begin{equation}\label{051403}
R_+=\int_{0}^1\{(1-t)\{N,F\}+R_0+R_1,F\}\circ X_F^{t}\ \mathrm{d} t+R_2\circ X_F^1.
\end{equation}
\subsection{The new norm}To estimate
the solutions of the homological equations, we will define a new norm for the Hamiltonian ${R}$ of the form as in (\ref{N1}) as follows:
\begin{equation}
||{R}||_{\rho}^{+}=\max\{||R_0||_\rho^{+},||R_1||_\rho^{+}|,||R_2||_\rho^{+}\},
\end{equation}
where
\begin{eqnarray}
&&\label{051302}\left\|R_0\right\|_\rho^{+}=\sup_{a,k,k'\in\mathbb{N}^{\mathbb{Z}}}\frac{\left|B_{akk'}\right|}
{e^{\rho\left(\sum_{n\in\mathbb{Z}}\left(2a_n+k_n+k_n'\right)\ln^{\sigma}\lfloor n\rfloor-2\ln^{\sigma}\lfloor n_1^*\rfloor\right)}},\\
&&\left\|R_1\right\|_\rho^{+}=\sup_{a,k,k'\in\mathbb{N}^{\mathbb{Z}}\atop m\in\mathbb{Z}}\frac{\left|B^{(m)}_{akk'}\right|}{e^{\rho\left(\sum_{n\in\mathbb{Z}}
\left(2a_n+k_n+k_n'\right)\ln^{\sigma}\lfloor n\rfloor+2\ln^{\sigma}\lfloor m\rfloor-2\ln^{\sigma}\lfloor n_1^*\rfloor\right)}}\label{031920},
\end{eqnarray}
and
\begin{equation}\label{031921}
\left\|R_2\right\|_\rho^{+}=\sup_{a,k,k'\in\mathbb{N}^{\mathbb{Z}}\atop
m_1,m_2\in\mathbb{Z}}\frac{\left|B^{(m_1,m_2)}_{akk'}\right|}
{e^{\rho\left(\sum_{n\in\mathbb{Z}}\left(2a_n+k_n+k_n'\right)\ln^{\sigma}\lfloor n\rfloor
+2\ln^{\sigma}\lfloor m_1\rfloor+2\ln^{\sigma}\lfloor m_2\rfloor-2\ln^{\sigma}\lfloor n_1^*\rfloor\right)}}.
\end{equation}
\begin{rem}
Here we abuse the notation $n_1^*$. In fact, in (\ref{051302})
\begin{equation*}
n_1^*=\max\left\{|n|:a_n+k_n+k_n'\neq 0\right\};
\end{equation*}
in (\ref{031920})
\begin{equation*}
n_1^*=\max\left\{\max\left\{|n|:a_n+k_n+k_n'\neq 0\right\},|m|\right\};
\end{equation*}
and in (\ref{031921})
\begin{equation*}
n_1^*=\max\left\{\max\left\{|n|:a_n+k_n+k_n'\neq 0\right\},|m_1|,|m_2|\right\}.
\end{equation*}
\end{rem}
Then one has the following estimates:
\begin{lem}\label{051301}
Given any $\rho,\delta>0$ and a Hamiltonian $R$, one has
\begin{equation}\label{N6}
||R||_{\rho+\delta}^{+}\leq\exp\left\{3\left(\frac6{\delta}\right)^{\frac1{\sigma-1}}\cdot \exp\left\{\left(\frac6{\delta}\right)^{\frac1{\sigma}}\right\} \right\}||R||_{\rho}
\end{equation}
and
\begin{equation}\label{N7}
||R||_{\rho+\delta}\leq\frac{64}{e^2\delta^2}||R||_{\rho}^{+}.
\end{equation}
\end{lem}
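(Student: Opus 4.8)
The two inequalities are essentially dual in nature: (\ref{N6}) converts the ``plain'' norm $\|R\|_\rho$ into the ``$+$''-norm at the slightly larger weight $\rho+\delta$, while (\ref{N7}) does the reverse. For (\ref{N6}) I would treat the three pieces $R_0,R_1,R_2$ separately, since the definition of $\|\cdot\|_\rho^+$ differs on each by the extra factors $e^{2\rho\ln^\sigma\lfloor m\rfloor}$, $e^{2\rho(\ln^\sigma\lfloor m_1\rfloor+\ln^\sigma\lfloor m_2\rfloor)}$, and the enlarged definition of $n_1^*$. The key observation is that the coefficient $B_{akk'}^{(n)}$ of $R_1$ is, when one tracks through Definition~\ref{083103}, the coefficient of a monomial whose support has been enlarged by two copies of the index $n$ (coming from the two factors $J_n=I_n-I_n(0)$), and similarly for $R_2$. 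So bounding $\|R_1\|_{\rho+\delta}^+$ in terms of $\|R_1\|_\rho$ amounts to comparing
\[
e^{(\rho+\delta)\big(\sum_n(2a_n+k_n+k_n')\ln^\sigma\lfloor n\rfloor+2\ln^\sigma\lfloor m\rfloor-2\ln^\sigma\lfloor n_1^*\rfloor\big)}
\]
with the $\rho$-weighted exponential appearing in $\|R_1\|_\rho$, and the mismatch is controlled by a product of geometric factors. I would extract the gain $e^{\delta(\cdots)}$ and use the elementary estimate — already invoked elsewhere in the paper via (\ref{122401}) and (\ref{041809}) — that $\prod_{n\in\mathbb{Z}}(1-e^{-c\ln^\sigma\lfloor n\rfloor})^{-1}$ and the supremum of $\prod_{|n|\le n_3^*}(1+l_n^2)e^{-c\sum_{i\ge3}\ln^\sigma\lfloor n_i^*\rfloor}$ are both bounded by $\exp\{C(1/c)^{1/(\sigma-1)}\exp\{(1/c)^{1/\sigma}\}\}$-type quantities, which is exactly the shape of the right side of (\ref{N6}). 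The bookkeeping of how $2\ln^\sigma\lfloor m\rfloor-2\ln^\sigma\lfloor n_1^*\rfloor$ behaves — it is $\le 0$ when $m$ is not the largest index and contributes nothing, and when $m$ is the largest index the $n_1^*$ in the two norms agree — is the step requiring care.

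For (\ref{N7}) the direction is easier in spirit: one wants to absorb the extra weight $e^{\delta(\cdots)}$ while paying only the polynomial price $64/(e^2\delta^2)$. Here I would go the other way: from the definition of $\|R\|_\rho^+$ one has, for each monomial,
\[
|B_{akk'}|\le \|R\|_\rho^+\, e^{\rho(\sum_n(2a_n+k_n+k_n')\ln^\sigma\lfloor n\rfloor-2\ln^\sigma\lfloor n_1^*\rfloor)}
\]
(and the analogous bounds with the $J_n$-factors), and then to bound $\|R\|_{\rho+\delta}$ one must check that the monomials occurring in $R_1,R_2$ — which carry the extra index weights — still satisfy the bound required by Definition~\ref{083103} at weight $\rho+\delta$. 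The two extra factors of $\ln^\sigma\lfloor m\rfloor$ (resp. $\ln^\sigma\lfloor m_i\rfloor$) are each dominated by the corresponding term in $\sum_n(2a_n+k_n+k_n')\ln^\sigma\lfloor n\rfloor$ after the index $m$ is re-inserted into the support, so the shift costs at most $e^{-\delta\cdot(\text{something}\ge c)}$ times a factor accounting for the difference between the two conventions for $n_1^*$; using $\sup_{x\ge0}x^2e^{-\delta x}\le (2/(e\delta))^2$ (which is where the $64/(e^2\delta^2)$ comes from) closes it.

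The main obstacle, as I see it, is not any single estimate but the careful matching of the \emph{definitions} of $n_1^*$ across the three norms — the remark immediately preceding the lemma warns that $n_1^*$ is being abused, incorporating $|m|$ in the $R_1$ case and $\max\{|m_1|,|m_2|\}$ in the $R_2$ case — so that the extra $+2\ln^\sigma\lfloor m\rfloor$ type terms in (\ref{031920})–(\ref{031921}) are exactly what is needed to make the $J_n$-monomials fit into the plain norm (\ref{042602}) when one views $J_n$ as introducing two fresh factors of index $n$. Once that dictionary is set up correctly, both (\ref{N6}) and (\ref{N7}) reduce to the one-variable and product estimates that the paper has already established (notably (\ref{122401}), (\ref{041809}), (\ref{042807}), and the elementary $\sup_x x^p e^{-\delta x}$ bound quoted as (\ref{042805*})), applied term by term to $R_0$, $R_1$, $R_2$ and then combined by taking the max.
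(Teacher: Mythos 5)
Your plan matches the paper's own proof in all essentials: the paper likewise sets up the dictionary $a_n=\alpha_n+b_n$, $k_n=\kappa_n-b_n$, $k_n'=\kappa_n'-b_n$ with $b_n=\kappa_n\wedge\kappa_n'$, so that the weight $\sum_n(2a_n+k_n+k_n')\ln^{\sigma}\lfloor n\rfloor$ is preserved and the extra $2\ln^{\sigma}\lfloor m\rfloor$ terms in (\ref{031920})--(\ref{031921}) exactly account for the $J_m$ factors, then bounds the multiplicity of the regrouping by $\prod_n(1+a_n)$ (with extra factors $(1+a_{m_i})$ for $R_1,R_2$) and absorbs it using the tame inequality (\ref{001}) together with (\ref{042807}), which yields (\ref{N6}); for (\ref{N7}) it uses the counting factor $\left(\sum_n(\alpha_n+b_n)\right)^2$, the bound (\ref{042605}), and the elementary estimate (\ref{042805*}) with $p=2$, exactly as you indicate. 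Your sketch is correct in approach; what remains to be written out is only the explicit multiplicity count and the short case analysis on whether $a$ charges the indices $n_1^*,n_2^*$.
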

\begin{proof}
Firstly, we will prove the inequality (\ref{N6}).
 Fixed $a,k,k'\in\mathbb{N}^{\mathbb{Z}}$, consider the term
$$\mathcal{M}_{akk'}=\prod_{n\in\mathbb{Z}}I_n(0)^{a_n}q_n^{k_n}\bar q_n^{k_n'}$$ satisfying $k_nk_n'=0$ for all $n\in\mathbb{Z}$. It is easy to see that $\mathcal{M}_{akk'}$ comes from some parts of the terms $\mathcal{M}_{\alpha\kappa\kappa'}$ with no assumption for $\kappa$ and $\kappa'$.
Write $\mathcal{M}_{\alpha\kappa\kappa'}$ in the form of
\begin{equation*}
\mathcal{M}_{\alpha\kappa\kappa'}=\mathcal{M}_{\alpha bll'}=\prod_{n\in\mathbb{Z}}I_n(0)^{\alpha_n}I_n^{b_n}q_n^{l_n}{\bar q_n}^{l_n'}
\end{equation*}
where
\begin{equation*}
b_n=\kappa_n\wedge \kappa_n',\quad l_n=\kappa_n-b_n,\quad l_n'=\kappa_n'-b_n
\end{equation*}
and then
$l_nl_n'=0$ for all $n\in\mathbb{Z}$.

Express the term
\begin{equation*}
\prod_{n\in\mathbb{Z}}I_n^{b_n}=\prod_{n\in\mathbb{Z}}\left(I_n(0)+J_n\right)^{b_n}
\end{equation*}by the monomials of the following form
\begin{equation*}
\prod_{n\in\mathbb{Z}}I_n(0)^{b_n},
\end{equation*}
\begin{equation*}
b_mI_m(0)^{b_m-1}J_m\left(\prod_{n\in\mathbb{Z}\atop n\neq m}I_n(0)^{b_n}\right),\qquad m\in\mathbb{Z},
\end{equation*}
\begin{equation*}
\left(\prod_{n\in\mathbb{Z}\atop n<m}I_n(0)^{b_n}\right)\left(b_m(b_m-1)I_m(0)^{r}J_m^2I_m^{b_m-r-2}\right)\left(\prod_{n\in\mathbb{Z}\atop n>m}I_n^{b_n}\right),\qquad m\in\mathbb{Z},\ 0\leq r\leq b_m-2.
\end{equation*}
and
\begin{eqnarray*}
&&\left(\prod_{n<m_1}I_n(0)^{b_n}\right)\left(b_{m_1}I_{m_1}(0)^{b_{m_1}-1}J_{m_1}\right)\left(\prod_{m_1<n<m_2}I_n(0)^{b_n}\right)
\\
&&\nonumber\times\left(b_{m_2}I_{m_2}(0)^{r}J_{m_2}I_{m_2}^{b_{m_2}-r-1}\right)\left(\prod_{n>m_2}I_n^{b_n}\right)\qquad m_1<m_2,\ 0\leq r\leq b_{m_2}-1.
\end{eqnarray*}
Now we will estimate the bounds for the coefficients respectively. For any given $n\in\mathbb{Z}$,
\begin{equation}\label{030510}
I_n(0)^{a_n}q_n^{k_n}\bar q_n^{k_n'}=\sum_{b_n=\kappa_n\wedge \kappa_n'}I_n(0)^{\alpha_n+b_n}q_n^{\kappa_n-b_n}\bar q_n^{\kappa_n'-b_n}.
\end{equation}
Hence, one has
\begin{equation}\label{N2}
a_n=\alpha_n+b_n,
\end{equation}
and
\begin{equation}\label{N3}
k_n=\kappa_n-b_n,\qquad k_n'=\kappa_n'-b_n.
\end{equation}
Therefore, if $0\leq\alpha_n\leq a_n$ is chosen, so $b_n,\kappa_n,\kappa_n'$ are determined.

On the other hand, by (\ref{042602}) we have
\begin{eqnarray}
\nonumber\left|B_{\alpha\kappa\kappa'}\right|
&\leq&\nonumber \left\|R\right\|_{\rho}e^{\rho\left(\sum_{n\in\mathbb{Z}}\left(2\alpha_n+\kappa_n+\kappa_n'\right)\ln^{\sigma}\lfloor n\rfloor-2\ln^{\sigma}\lfloor n_1^*\rfloor\right)}\qquad\qquad\qquad\qquad\ \\
&=&\nonumber\left\|R\right\|_{\rho}e^{\rho\left(\sum_{n\in\mathbb{Z}}\left(2\alpha_n+\left(k_n+a_n-\alpha_n\right)+\left(k_n'+a_n-\alpha_n\right)\right)\ln^{\sigma}\lfloor n\rfloor-2\ln^{\sigma}\lfloor n_1^*\rfloor\right)}\quad\\
&&\nonumber{(\mbox{in view of (\ref{N2}) and (\ref{N3})})}\\
&=&\nonumber\left\|R\right\|_{\rho}e^{\rho\left(\sum_{n\in\mathbb{Z}}\left(2a_n+k_n+k_n'\right)\ln^{\sigma}
\lfloor n\rfloor-2\ln^{\sigma}\lfloor n_1^*\rfloor\right)}.
\end{eqnarray}
Using (\ref{030510}), one has
\begin{equation}\label{N4}
\left|B_{akk'}\right|\leq\left\|R\right\|_{\rho}\prod_{n\in\mathbb{Z}}\left(1+a_n\right)
e^{\rho\left(\sum_{n\in\mathbb{Z}}\left(2a_n+k_n+k_n'\right)\ln^{\sigma}\lfloor n\rfloor-2\ln^{\sigma}\lfloor n_1^*\rfloor\right)}.
\end{equation}
Similarly,
\begin{eqnarray*}
\left|B_{akk'}^{(m)}\right|&\leq& \left\|R\right\|_{\rho}\left(\prod_{n\neq m}\left(1+a_n\right)\right)\left(1+a_m\right)^2e^{\rho\left(\sum_{n\in\mathbb{Z}}
\left(2a_n+k_n+k_n'\right)\ln^{\sigma}\lfloor n\rfloor+2\ln^{\sigma}\lfloor m\rfloor-2\ln^{\sigma}\lfloor n_1^*\rfloor\right)},\\
\left|B_{akk'}^{(m,m)}\right|&\leq& \left\|R\right\|_{\rho}\left(\prod_{n\neq m}\left(1+a_n\right)\right)\left(1+a_m\right)^3e^{\rho\left(\sum_{n\in\mathbb{Z}}
\left(2a_n+k_n+k_n'\right)\ln^{\sigma}\lfloor n\rfloor+4\ln^{\sigma}\lfloor m\rfloor-2\ln^{\sigma}\lfloor n_1^*\rfloor\right)},\\
\left|B_{akk'}^{(m_1,m_2)}\right|&\leq& \left\|R\right\|_{\rho}\left(\prod_{n<m_1}\left(1+a_n\right)\right)\left(1+a_{m_1}\right)^2
\left(\prod_{m_1<n<m_2 }\left(1+a_n\right)\right)\\&&\times \left(1+a_{m_2}\right)^2e^{\rho\left(\sum_{n\in\mathbb{Z}}\left(2a_n+k_n+k_n'\right)\ln^{\sigma}\lfloor n\rfloor+2\ln^{\sigma}\lfloor m_1\rfloor+2\ln^{\sigma}\lfloor m_2\rfloor-2\ln^{\sigma}\lfloor n_1^*\rfloor\right)}.
\end{eqnarray*}
In view of (\ref{051302}) and (\ref{N4}), we have
\begin{equation}\label{N5}
\left\|R_0\right\|_{\rho+\delta}^{+}
\leq\left\|R\right\|_{\rho}\prod_{n\in\mathbb{Z}}\left(1+a_n\right)e^{-\delta\left(\sum_{n\in\mathbb{Z}}\left(
2a_n+k_n+k_n'\right)\ln^{\sigma}\lfloor n\rfloor-2\ln^{\sigma}\lfloor n_1\rfloor\right)}.
\end{equation}
Now we will estimate the upper bound of
\begin{equation}
\label{051303}\prod_{n\in\mathbb{Z}}\left(1+a_n\right)e^{-\delta\left(\sum_{n\in\mathbb{Z}}
\left(2a_n+k_n+k_n'\right)\ln^{\sigma}\lfloor n\rfloor-2\ln^{\sigma}\lfloor n_1^*\rfloor\right)}.
\end{equation}

\textbf{Case 1.} $n_1^*=n_3^*.$

Using (\ref{001}), one has
\begin{eqnarray}
(\ref{051303})
&\leq&\nonumber \prod_{n\in\mathbb{Z}}\left(1+a_n\right)e^{-\frac12{\delta}\sum_{i\geq3}\ln^{\sigma}\lfloor n_i\rfloor}\\
&\leq&\nonumber\prod_{n\in\mathbb{Z}}\left(1+a_n\right)^{-\frac16{\delta}\sum_{i\geq1}\ln^{\sigma}\lfloor n_i\rfloor}\\
&=&\nonumber\prod_{n\in\mathbb{Z}}\left(1+a_n\right)e^{-\frac16\delta\sum_{n\in\mathbb{Z}}
\left(2a_n+k_n+k_n'\right)\ln^{\sigma}\lfloor n\rfloor}\\
&\leq&\nonumber\prod_{n\in\mathbb{Z}}\left((1+a_n)e^{-\frac13{\delta}a_n\ln^{\sigma}\lfloor n\rfloor}\right)\\
&\leq&\exp\left\{3\left(\frac6{\delta}\right)^{\frac1{\sigma-1}}\cdot \exp\left\{\left(\frac6{\delta}\right)^{\frac1{\sigma}}\right\} \right\}\label{030205},
\end{eqnarray}
where the last inequality is based on (\ref{042807}).

\textbf{Case 2.} $n_1^*>n_2^*=n_3^*.$

In this case, one has $a_{n_1^*}=a_{-n_1^*}=0$, otherwise $n_1^*=n_2^*$.
Then we have
\begin{eqnarray}
\nonumber(\ref{051303})
&=&\left(\prod_{|n|\leq n_2^*}\left(1+a_n\right)e^{-\frac12\delta\sum_{i\geq3}\ln^{\sigma}\lfloor n_i^*\rfloor}\right)
\\
&\leq&\nonumber\prod_{|n|\leq n_2^*}(1+a_n)e^{-\frac14\delta\sum_{i\geq2}\ln^{\sigma}\lfloor n_i^*\rfloor}\\
&\leq&\label{030206} \exp\left\{3\left(\frac4{\delta}\right)^{\frac1{\sigma-1}}\cdot \exp\left\{\left(\frac4{\delta}\right)^{\frac1{\sigma}}\right\} \right\},
%&=&\nonumber2\cdot\prod_{|n|\leq n_2^*}(1+a_n)e^{-\frac14\delta\sum_{|n|\leq n_2^*}\left(2a_n+k_n+k_n'\right)\ln^{\sigma}\lfloor n\rfloor}\\
%&\leq&2\cdot\prod_{|n|\leq n_2^*}\left( (1+a_n)e^{-\frac12\delta a_n\ln^{\sigma}\lfloor n\rfloor}\right)\\
%&\leq&2e^{{3e^{\left(\frac{\delta}4\right)^{-\frac1\sigma}}}\left(\frac{4}{\delta\sigma}\right)^{\frac1{\sigma-1}}}\ \ \mbox{(in view of (\ref{042807}) with $p=1$)}.
\end{eqnarray}
where the last inequality follows from the proof of (\ref{030205}).

\textbf{Case 3.} $n_2^*>n_3^*.$

In this case, one has $a_{n_1^*}+a_{-n_1^*}+a_{n_2^*}+a_{-n_2^*}\leq1$ which implies
\begin{equation*}
\prod_{|n|=n_1^*,n_2^*}(1+a_n)\leq 2.
\end{equation*}
 Hence we have
\begin{eqnarray*}
(\ref{051303})
&\leq&2\left(\prod_{|n|\leq n_3^*}(1+a_n)e^{-\frac12\delta\sum_{i\geq3}\ln^{\sigma}\lfloor n_i^*\rfloor}\right)
\\
%&&\mbox{(in view of (\ref{}))}\\
%&\leq&\nonumber2^2\cdot\prod_{|n|\leq n_3^*}(1+a_n)e^{-\delta\sum_{|n|\leq n_3^*}\left(2a_n+k_n+k_n'\right)\ln^{\sigma}\lfloor n \rfloor}\\
%&\leq&5\cdot\prod_{|n|\leq n_3^*}\left( \left(1+a_n\right)e^{-2\delta a_n\ln^{\sigma}\lfloor n\rfloor}\right)\\
&\leq&2\exp\left\{3\left(\frac2{\delta}\right)^{\frac1{\sigma-1}}\cdot \exp\left\{\left(\frac2{\delta}\right)^{\frac1{\sigma}}\right\} \right\},
\end{eqnarray*}
where the last inequality follows from the proof of (\ref{030205}).

In view of the above three cases, one has
\begin{equation*} ({\ref{051303}})\leq \exp\left\{3\left(\frac6{\delta}\right)^{\frac1{\sigma-1}}\cdot \exp\left\{\left(\frac6{\delta}\right)^{\frac1{\sigma}}\right\} \right\}.
\end{equation*}
Similarly, we get
\begin{eqnarray*}
&&||R_1||_{\rho+\delta}^{+},||R_2||_{\rho+\delta}^{+}\leq\exp\left\{3\left(\frac6{\delta}\right)^{\frac1{\sigma-1}}\cdot \exp\left\{\left(\frac6{\delta}\right)^{\frac1{\sigma}}\right\} \right\}||R||_{\rho},
\end{eqnarray*}
Then we have
\begin{equation*}
||R||_{\rho+\delta}^{+}\leq\exp\left\{3\left(\frac6{\delta}\right)^{\frac1{\sigma-1}}\cdot \exp\left\{\left(\frac6{\delta}\right)^{\frac1{\sigma}}\right\} \right\}||R||_{\rho}.
\end{equation*}

On the other hand, the coefficient of $\mathcal{M}_{\alpha bll'}$ increases by at most a factor $$\left(\sum_{n\in\mathbb{Z}}(\alpha_n+b_n)\right)^2.$$ Then one has
\begin{eqnarray}
\nonumber\left\|R\right\|_{\rho+\delta}
&\leq&\nonumber\left\|R\right\|_{\rho}^{+}\left(\sum_{n\in\mathbb{Z}}(\alpha_n+b_n)\right)^2e^{-\delta\left(\sum_{n\in\mathbb{Z}}\left(2a_n+k_n+k_n'\right)\ln^{\sigma}\lfloor n\rfloor-2\lfloor n_1^*\rfloor\right)}\\
&\leq&\nonumber\left\|R\right\|_{\rho}^{+}\left(2\sum_{i\geq 3}\ln^{\sigma}\lfloor n_i\rfloor\right)^2 e^{-\frac12\delta\sum_{i\geq3}\ln^{\sigma}\lfloor n_i^*\rfloor}\quad (\mbox{in view of (\ref{042605})})\\
&\leq&\nonumber\frac{64}{e^2\delta^2}||R||_{\rho}^{+},
\end{eqnarray}
where the last inequality is based on Lemma \ref{8.6} with $p=2$.
\end{proof}

\subsection{KAM Iteration}\label{031501}

Now we give the precise
set-up of iteration parameters. For any $\sigma>2$, let $s\geq0$ be the $s$-th KAM
step.
 \begin{itemize}
 \item[]$\delta_{s}=\frac{\rho_0}{(s+4)\ln^2(s+4)},\qquad$ with $\rho_0= \frac{3-2\sqrt{2}}{100}$,

 \item[]$\rho_{s+1}=\rho_{s}+3\delta_s,$

 \item[]$\epsilon_s=\epsilon_{0}^{(\frac{3}{2})^s}$, which dominates the size of
 the perturbation,

 \item[]$\lambda_s=\epsilon_s^{0.01}$,

 \item[]$\eta_{s+1}=\frac{1}{20}\lambda_s\eta_s$, with $\eta_0=\lambda_0$,

 \item[]$d_{s+1}=d_{s}+\frac{1}{\pi^2(s+1)^2}$, with $d_0=0$,

 \item[]$D_s=\{(q_n)_{n\in\mathbb{Z}}:\frac{1}{2}+d_s\leq|q_n|e^{\ln^{\sigma}\lfloor n\rfloor}\leq1-d_s\}$.
 \end{itemize}
\begin{rem}
Then one has
\begin{equation*}
\sum_{s\geq 0}\delta_s\leq \frac53\rho_0,
\end{equation*}
\begin{equation*}
\rho_0\leq \rho_s\leq 6\rho_0<\frac17,\qquad \forall\ s\geq 0,
\end{equation*}
\begin{equation*}\delta_s<\min \left\{\frac14\rho_s,3-2\sqrt{2}\right\},\qquad \forall\ s\geq 0.
\end{equation*}
\end{rem}
Denote the complex cube of size $\lambda>0$:
\begin{equation*}\label{M9}
\mathcal{C}_{\lambda}\left(\widetilde{V}\right)=\left\{(V_n)_{n\in\mathbb{Z}}\in\mathbb{C}^{\mathbb{Z}}:\left|V_n-\widetilde{V}_n\right|\leq \lambda\right\}.
\end{equation*}

\begin{lem}{\label{IL}}
Suppose $H_{s}=N_{s}+R_{s}$ is real analytic on $D_{s}\times\mathcal{C}_{\eta_{s}}\left(V_{s}^*\right)$,
where $$N_{s}=\sum_{n\in\mathbb{Z}}\left(n^2+\widetilde V_{n,s}\right)\left|q_n\right|^2$$ is a normal form with coefficients satisfying
\begin{eqnarray}
\label{198}&&\widetilde{V}_{s}\left(V_{s}^*\right)=\omega,\\
\label{199}&&\left|\left|\frac{\partial \widetilde{V}_s}{{\partial V}}-Id\right|\right|_{l^{\infty}\rightarrow l^{\infty}}<d_s\epsilon_{0}^{\frac{1}{10}},
\end{eqnarray}
and $R_{s}=R_{0,s}+R_{1,s}+R_{2,s}$ satisfying
\begin{eqnarray}
\label{200}&&||R_{0,s}||_{\rho_{s}}^{+}\leq \epsilon_{s},\\
\label{201}&&||R_{1,s}||_{\rho_{s}}^{+}\leq \epsilon_{s}^{0.6},\\
\label{202}&&||R_{2,s}||_{\rho_{s}}^{+}\leq (1+d_s)\epsilon_0.
\end{eqnarray}
Then for all $V\in\mathcal{C}_{\eta_{s}}\left(V_{s}^*\right)$ satisfying $\widetilde V_{s}(V)\in\mathcal{C}_{\lambda_s}(\omega)$, there exist real analytic symplectic coordinate transformations
$\Phi_{s+1}:D_{s+1}\rightarrow D_{s}$ satisfying
\begin{eqnarray}
\label{203}&&\left\|\Phi_{s+1}-id\right\|_{\sigma,\infty}\leq \epsilon_{s}^{0.5},\\
\label{204}&&\left\|D\Phi_{s+1}-Id\right\|_{(\sigma,\infty)\rightarrow(\sigma,\infty)}\leq \epsilon_{s}^{0.5},
\end{eqnarray}
such that for
$H_{s+1}=H_{s}\circ\Phi_{s+1}=N_{s+1}+R_{s+1}$, the same assumptions as above are satisfied with `$s+1$' in place of `$s$', where $\mathcal{C}_{\eta_{s+1}}\left(V_{s+1}^*\right)\subset\widetilde V_{s}^{-1}(\mathcal{C}_{\lambda_s}(\omega))$ and
\begin{equation}\label{206}
\left\|\widetilde{V}_{s+1}-\widetilde{V}_{s}\right\|_{\infty}\leq\epsilon_{s}^{0.5},
\end{equation}
\begin{equation}\label{205}
\left\|V_{s+1}^*-V_{s}^*\right\|_{\infty}\leq2\epsilon_{s}^{0.5}.
\end{equation}
\end{lem}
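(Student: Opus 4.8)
The plan is to carry out one step of the Newton scheme in five stages, keeping careful track of the norms introduced in Section 3.2. \textbf{Stage 1: solving the homological equation.} First I would take the explicit solutions $F_{akk'}$ and $F_{akk'}^{(m)}$ given by (\ref{051304})--(\ref{051305}) and estimate $\|F_0\|_{\rho_s+2\delta_s}^{+}$ and $\|F_1\|_{\rho_s+2\delta_s}^{+}$. The denominator $\sum_n(k_n-k'_n)(n^2+\widetilde V_n)$ is the small divisor; since $\mathrm{supp}\,k\cap\mathrm{supp}\,k'=\emptyset$ and (\ref{050901}) holds, momentum conservation gives the quadratic relation (\ref{022002}), so that one may invoke (\ref{022004}) to control $n_1^*,n_2^*$ by the remaining indices, and then bound the divisor from below by the Diophantine condition (\ref{S1}) applied to $\omega$ (valid since $\widetilde V_s(V)\in\mathcal{C}_{\lambda_s}(\omega)$). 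The product $\prod_n(1+l_n^2\langle n\rangle^4)^{-1}$ in (\ref{S1}) is exactly offset by the tame inequality (\ref{001}) of Lemma \ref{005} together with the weight gain one has when passing from $\|\cdot\|_{\rho_s}^{+}$ to $\|\cdot\|_{\rho_s+2\delta_s}^{+}$; this is where the choice $\sigma>2$ is essential, since the lost factor $\exp\{C\delta_s^{-1}\exp(C\delta_s^{-1/(\sigma-1)})\}$ must still be beaten by the super-exponentially small $\epsilon_s$. The outcome is a bound of the form $\|F_0\|_{\rho_s+2\delta_s}^{+}\lesssim \epsilon_s^{0.9}$ and $\|F_1\|_{\rho_s+2\delta_s}^{+}\lesssim\epsilon_s^{0.5}$, say, after using Lemma \ref{051301} to convert between $\|\cdot\|$ and $\|\cdot\|^{+}$.

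\textbf{Stage 2: the transformation and its estimates.} With $F=F_0+F_1$ in hand, define $\Phi_{s+1}=X_F^t|_{t=1}$. Using Lemma \ref{063004} (Hamiltonian Vector Field) on $X_F$ at weight $\rho_s+\delta_s$, together with the smallness of $\|F\|$, I would derive (\ref{203}) and (\ref{204}) by a standard Gr\"onwall/Neumann-series argument; one checks that the smallness hypothesis (\ref{042801}) of Lemma \ref{E1} is met thanks to the gap $\delta_s$ and the bound on $\|F\|_{\rho_s+\delta_s}$, and that $\Phi_{s+1}$ maps $D_{s+1}$ into $D_s$ because $d_{s+1}-d_s=\frac1{\pi^2(s+1)^2}$ dominates $\epsilon_s^{0.5}$.

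\textbf{Stage 3: the new normal form and the frequency shift.} From (\ref{051402}), $N_{s+1}=N_s+[R_0]+[R_1]$, so $\widetilde V_{n,s+1}=\widetilde V_{n,s}+$ (coefficient extracted from $[R_0]+[R_1]$), giving (\ref{206}) directly from $\|R_{0,s}\|^{+},\|R_{1,s}\|^{+}$. Then one must solve $\widetilde V_{s+1}(V_{s+1}^*)=\omega$: I would apply an inverse-function/contraction argument using hypothesis (\ref{199}), which says $\partial\widetilde V_s/\partial V$ is close to the identity in $\ell^\infty\to\ell^\infty$; the increment (\ref{206}) and the chain rule propagate (\ref{199}) to step $s+1$ with constant $d_{s+1}$, and the fixed point gives $V_{s+1}^*$ with (\ref{205}), provided $\eta_{s+1}$ is small enough relative to $\epsilon_s^{0.5}$ — which the definition $\eta_{s+1}=\frac1{20}\lambda_s\eta_s$ ensures. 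One also checks $\mathcal{C}_{\eta_{s+1}}(V_{s+1}^*)\subset\widetilde V_s^{-1}(\mathcal{C}_{\lambda_s}(\omega))$.

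\textbf{Stage 4: the new perturbation.} From (\ref{051403}), $R_{s+1}=\int_0^1\{(1-t)\{N,F\}+R_0+R_1,F\}\circ X_F^t\,dt+R_2\circ X_F^1$. Using the homological equation (\ref{4.27}) to rewrite $(1-t)\{N,F\}+R_0+R_1 = t(R_0+R_1)+(1-t)([R_0]+[R_1])$, the integrand is a Poisson bracket of two small-in-$\|\cdot\|_{\rho_s+\delta_s}$ Hamiltonians; Lemma \ref{010} (Poisson Bracket) then gives $\|\{\cdots,F\}\|_{\rho_s+2\delta_s}$, and Lemma \ref{E1} (Hamiltonian Flow) controls the composition with $X_F^t$ at the cost of another $\delta_s$ in the weight, landing at $\rho_{s+1}=\rho_s+3\delta_s$. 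Finally one uses Lemma \ref{051301} to pass back to the $\|\cdot\|_{\rho_{s+1}}^{+}$ norm and splits the result into its $R_{0,s+1},R_{1,s+1},R_{2,s+1}$ pieces: the quadratic-in-$(R_0,R_1)$ terms feed $R_{0,s+1}$ and give $\|R_{0,s+1}\|_{\rho_{s+1}}^{+}\leq\epsilon_s^{3/2}=\epsilon_{s+1}$; the cross terms and the $R_2\circ X_F^1$ contribution of first order in $J$ feed $R_{1,s+1}$ with bound $\epsilon_{s+1}^{0.6}$; and the bulk of $R_2\circ X_F^1$ feeds $R_{2,s+1}$, where one only needs $\|R_{2,s+1}\|_{\rho_{s+1}}^{+}\leq(1+d_{s+1})\epsilon_0$, which follows since $R_{2,s+1}-R_{2,s}$ is of order $\epsilon_s^{0.5}$ and $d_{s+1}-d_s=\frac1{\pi^2(s+1)^2}$ absorbs it.

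\textbf{Main obstacle.} The delicate point is Stage 1: one must show that the enormous combinatorial/weight-loss factor $\exp\{C\delta_s^{-1}\exp(C\delta_s^{-1/(\sigma-1)})\}$ produced by the small-divisor estimate and by Lemmas \ref{010}--\ref{051301} is still swallowed by $\epsilon_s = \epsilon_0^{(3/2)^s}$, for \emph{all} $s$ simultaneously. With $\delta_s\sim \rho_0/((s+4)\ln^2(s+4))$ the bad factor is roughly $\exp\{\exp(C\,s^{1/(\sigma-1)}(\ln s)^{2/(\sigma-1)})\}$, which is doubly exponential in a fractional power of $s$, whereas $-\ln\epsilon_s$ grows like $(3/2)^s$; the inequality $\exp\{\exp(Cs^{1/(\sigma-1)}(\ln s)^{\cdots})\}\ll (\text{const})^{(3/2)^s}$ holds precisely because $1/(\sigma-1)<1$ — i.e. because $\sigma>2$ — and this is the quantitative heart of the whole iteration. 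I expect the bulk of the work to be bookkeeping of these competing exponentials and verifying the threshold $\epsilon_*(\sigma,\gamma)$ makes every inequality valid from $s=0$ on.
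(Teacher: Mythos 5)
Your overall architecture coincides with the paper's: solve the homological equations (\ref{051304})--(\ref{051305}), control the transformation via Lemmas \ref{010}, \ref{E1} and \ref{063004}, sort the new remainder by its degree in $J$ so that the pure $R_1$--$F_1$ cross terms (which are only $O(\epsilon_s^{1.1})$, not $O(\epsilon_s^{1.5})$) are assigned to $R_{1,s+1}$ and $R_{2,s+1}$ rather than to $R_{0,s+1}$, and recover the prescribed frequency by an inverse function theorem on $\widetilde V_{s+1}$. All of that matches the paper's proof.

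The genuine gap is in your Stage 1, at the small-divisor lower bound, which is the heart of the lemma. You assert that the product $\prod_n\bigl(1+(k_n-k_n')^2\langle n\rangle^4\bigr)^{-1}$ from (\ref{S1}) is ``exactly offset'' by the tame inequality (\ref{001}) together with the weight gain $e^{-\delta_s\left(\sum_n(2a_n+k_n+k_n')\ln^{\sigma}\lfloor n\rfloor-2\ln^{\sigma}\lfloor n_1^*\rfloor\right)}\le e^{-\frac12\delta_s\sum_{i\ge3}\ln^{\sigma}\lfloor n_i^*\rfloor}$. A term-by-term offset of this kind fails: for a mode $n$ with $k_n\ne k_n'$ the loss is of order $\ln\langle n\rangle$ while the available gain is of order $\delta_s\ln^{\sigma}\lfloor n\rfloor$, so the comparison goes the wrong way whenever $\ln^{\sigma-1}\lfloor n\rfloor\lesssim\delta_s^{-1}$, and the number of such low modes grows with $s$; their total uncompensated contribution must be estimated globally, not pointwise. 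The paper instead (i) truncates: monomials with $e^{-\frac12\delta_s\sum_{i\ge3}\ln^{\sigma}\lfloor n_i^*\rfloor}<\epsilon_{s+1}$ need not be eliminated at all, and by Lemma \ref{a1} the surviving monomials satisfy $\sum_n|k_n-k_n'|\ln^{\sigma}\lfloor n\rfloor\le B_s$ as in (\ref{M2}), hence involve only modes $|n|\le N_s^*$ as in (\ref{032401}); and (ii) bounds the Diophantine product over the surviving monomials from below by $\lambda_s=\epsilon_s^{0.01}$, by splitting it at the intermediate scale $N_s=B_s^{(\sigma-1)/\sigma}/\ln^{\sigma}B_s$ of (\ref{030701}) and estimating the two pieces differently in (\ref{M6})--(\ref{M7}). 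The loss from the small divisor is thus not offset but \emph{absorbed}, as the factor $\gamma^{-1}\epsilon_s^{-0.01}$ in (\ref{022410}). Relatedly, your ``Main obstacle'' paragraph attributes the role of $\sigma>2$ entirely to the combinatorial factors $\exp\{C\delta_s^{-1}\exp(C\delta_s^{-1/(\sigma-1)})\}$ of Lemmas \ref{010}, \ref{E1} and \ref{051301}; that is one place where $\sigma>2$ is needed, but the estimate (\ref{022402}) imposes it a second time and independently, through $B_s(\ln B_s)^{1-\sigma}\sim s^{2-\sigma}\ln^2 s\cdot\ln(1/\epsilon_s)\ll\ln(1/\epsilon_s)$. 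This truncation-plus-splitting computation is exactly the improvement over Bourgain that the introduction advertises, and it is missing from the proposal.
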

\begin{proof}
We will estimate the solution of the homological equation firstly (see (\ref{051304}) and (\ref{051305})).
In the step $s\rightarrow s+1$, there is saving of a factor
\begin{equation*}
e^{-\delta_{s}\left(\sum_{n\in\mathbb{Z}}\left(2a_n+k_n+k'_n\right)\ln^{\sigma}\lfloor n\rfloor-2\ln^{\sigma}\lfloor n_1^*\rfloor\right)}\leq e^{-\frac12\delta_{s}\sum_{i\geq3}\ln^{\sigma}\lfloor n_i^*\rfloor}.
\end{equation*}
Recalling after this step, we need
\begin{eqnarray*}
&&\left\|R_{0,s+1}\right\|_{\rho_{s+1}}^{+}\leq \epsilon_{s+1},\\
&&\left\|R_{1,s+1}\right\|_{\rho_{s+1}}^{+}\leq \epsilon_{s+1}^{0.6}.
\end{eqnarray*}
Consequently, in $R_{i,s}\ (i=0,1)$, it suffices to eliminate the nonresonant monomials $\mathcal{M}_{akk'}$ for which
\begin{equation*}
e^{-\frac12\delta_{s}\sum_{i\geq3}\ln^{\sigma}\lfloor n_i^*\rfloor}\geq\epsilon_{s+1},
\end{equation*}
that is
\begin{equation}\label{M1}
\sum_{i\geq3}\ln^{\sigma}\lfloor n_i^*\rfloor\leq\frac{2(s+4)\ln^2(s+4)}{\rho_0}\cdot\ln\frac{1}{\epsilon_{s+1}}.
%:=\frac{2(s+1)^2}{\rho}\cdot \left(\frac32\right)^{s+1}\cdot \left|\ln \epsilon_0\right|.
\end{equation}
On the other hand, in the small divisors analysis, we need only impose Diophantine conditions when (\ref{041803}) holds. By Lemma \ref{a1} and (\ref{M1}), one has
\begin{equation}\label{M2}
\sum_{n\in\mathbb{Z}}|k_n-k_n'|\ln^{\sigma}\lfloor n\rfloor
\leq3\cdot 4^{\sigma}\cdot\frac{2(s+4)\ln^2(s+4)}{\rho_0}\cdot\ln\frac{1}{\epsilon_{s+1}}:= B_s.
\end{equation}
From (\ref{M2}), we need only impose condition on $\left(\widetilde{V}_n\right)_{|n|\leq N_{s}^*}$, where
\begin{equation}\label{032401}
N_{s}^*\sim \exp\left\{{B_s^{\frac1{\sigma}}}\right\}.
\end{equation}
Correspondingly, the Diophantine condition becomes
\begin{equation}\label{M4}
\left|\left|\sum_{|n|\leq N_{s}^*}(k_n-k'_n)\widetilde{V}_{n,s}\right|\right|\geq \gamma\prod_{|n|\leq N_{s}^*}\frac{1}{1+(k_n-k'_n)^2\langle n\rangle^4},
\end{equation}
and we finished the truncation step.

Now we will obtain lower bound on the right hand side of (\ref{M4}). More concretely, let
\begin{equation}\label{030701}N_s=\frac{B_s^{\frac{\sigma-1}{\sigma}}}{\ln^{\sigma} B_s}
\end{equation} and we have
\begin{eqnarray}
&&\nonumber\prod_{|n|\leq N_{s}^*}\frac{1}{1+(k_n-k'_n)^2\langle n\rangle^4}\\
\nonumber&=&\exp\left\{\sum_{|n|\leq N_s}\ln\left(\frac{1}{1+(k_n-k'_n)^2\langle n\rangle^4}\right)+\sum_{N_s<|n|\leq N_s^*}\ln\left(\frac{1}{1+(k_n-k'_n)^2\langle n\rangle^4}\right)\right\}\\
\nonumber
&\geq& \exp\left\{-10\sum_{|n|\leq N_s,k_n\neq k'_n}\left(\ln\left(|k_n-k'_n|+8\right)\right)\ln\lfloor n\rfloor-10\sum\limits_{|n|> N_s,k_n\neq k'_n}|k_n-k'_n|\ln\lfloor n\rfloor\right\}\\
\nonumber
%&\geq& \exp\left\{-20N\ln B-20\sum_{|n|> N,k_n\neq k'_n}\left(|k_n-k'_n|\ln^{\sigma}\lfloor n\rfloor\cdot\ln^{1-\sigma}\lfloor n\rfloor)\right)\right\}\\
%&&\nonumber \mbox{(in view of (\ref{M2}))}\\
\nonumber&\geq& \exp\left\{-60N_s B_s^{\frac1{\sigma}}-10B_s(\ln N_s)^{^{1-\sigma}}\right\}\qquad\qquad \ \mbox{(in view of (\ref{M2}))}\\
\label {M6}&\geq& \exp\left\{-{100B_s}{\left(\ln B_s\right)^{1-\sigma }}\right\},
\end{eqnarray}where the last inequality is based on (\ref{030701}).
In view of (\ref{M2}), one has
\begin{equation*}
\ln B_s=\ln\left(\frac{6\cdot4^{\sigma}}{\rho_0}\right)+\ln (s+4)+2\ln\ln(s+4)+(s+1)\ln\frac32+\ln\left|\ln\epsilon_0\right|
\end{equation*}
and then
\begin{eqnarray}
&&\nonumber100B_s(\ln B_s)^{1-\sigma}\\
&=&\nonumber\left(300\cdot 4^{\sigma}\cdot\frac{2(s+4)\ln^2(s+4)}{\rho_0}\cdot\ln\frac{1}{\epsilon_{s+1}}\right)\\
&&\nonumber\left(\ln\left(\frac{6\cdot4^{\sigma}}{\rho_0}\right)+\ln (s+4)+2\ln\ln(s+4)+(s+1)\ln\frac32+\ln\left|\ln\epsilon_0\right|\right)^{1-\sigma}\\
&\leq&\nonumber \ln \frac{1}{\epsilon_s}\cdot\left(900\cdot 4^{\sigma}\cdot\frac{2(s+4)\ln^2(s+4)}{\rho_0}\right)\left((s+1)\ln\frac32+\ln\left|\ln\epsilon_0\right|\right)^{1-\sigma}\\
&\leq&0.01\cdot \ln \frac{1}{\epsilon_s}\label{022402},
\end{eqnarray}
where using $\sigma>2$ and $\epsilon_0$ is sufficiently small depending on $\sigma$ only. Furthermore by (\ref{M6}) and (\ref{022402}), we have
\begin{eqnarray}
\prod_{|n|\leq N_s^{*}}\frac{1}{1+(k_n-k'_n)^2\langle n\rangle^4}
 &\geq&\exp\left\{-0.01\cdot \ln \frac{1}{\epsilon_s}\right\}=\lambda_s
\label{M7}.
\end{eqnarray}
Assuming $V\in \mathcal{C}_{\lambda_s}(\widetilde{V}_s)$, from the lower bound (\ref{M7}), the relation (\ref{M4}) remains true if we substitute $V$ for $\widetilde{V}_s$. Moreover, there is analyticity on $\mathcal{C}_{\lambda_s}(\widetilde{V}_s)$. The transformations $\Phi_{s+1}$ is obtained as the time-1 map $X_{F_s}^{t}|_{t=1}$ of the Hamiltonian
vector field $X_{F_s}$ with $F_s=F_{0,s}+F_{1,s}$.
Using (\ref{M7}) we get
\begin{eqnarray}\label{022410}
\left\|F_{i,s}\right\|_{\rho_s}^{+}\leq {\gamma}^{-1} \epsilon_s^{-0.01}\left\|R_{i,s}\right\|_{\rho_s}^{+},\qquad i=0,1.
\end{eqnarray}
Combining (\ref{200}), (\ref{201}), (\ref{022410}) and Lemma \ref{051301}, we get
\begin{equation}\label{Liu2}
\left\|F_{0,s}\right\|_{\rho_s+\delta_s}\leq\frac{64}{e^2\delta_s^2}\left\|F_{0,s}\right\|_{\rho_s}^{+}\leq \epsilon_s^{0.95},
\end{equation}
and
\begin{equation}\label{Liu2}
\left\|F_{1,s}\right\|_{\rho_s+\delta_s}\leq\frac{64}{e^2\delta_s^2}\left\|F_{1,s}\right\|_{\rho_s}^{+}\leq \epsilon_s^{0.55},
\end{equation}
Hence one has
\begin{equation}\label{022501}
\left\|F_{s}\right\|_{\rho_s+\delta_s}\leq \epsilon_s^{0.55}
\end{equation}
and
\begin{equation}\label{Liu4}
\sup_{q\in D_s}\|X_{F_s}\|_{\sigma,\infty}\leq\epsilon_{s}^{0.52},
\end{equation}
which follows from (\ref{050907}) in Lemma \ref{063004}.
Since $$\epsilon_{s}^{0.52}\ll\frac{1}{\pi^2(s+1)^2}=d_{s+1}-d_s,$$ we have $\Phi_{s+1}:D_{s+1}\rightarrow D_{s}$ with
\begin{equation}\label{Liu5}
\left\|\Phi_{s+1}-id\right\|_{\sigma,\infty}\leq\sup_{q\in D_s}\|X_{F_s}\|_{\sigma,\infty}<\epsilon_{s}^{0.5},
\end{equation}
which is the estimate (\ref{203}). Moreover, from (\ref{Liu4}) we get
\begin{equation}\label{Liu6}
\sup_{q\in D_s}\left\|DX_{F_s}-Id\right\|_{\sigma,\infty}\leq\frac{1}{d_s}\epsilon_{s}^{0.52}\ll\epsilon_{s}^{0.5},
\end{equation}
and thus the estimate (\ref{204}) follows.

Now we will estimate the term $R_{s+1}$. Recalling (\ref{051403}), one has $R_{s+1}=R_{2,s}+\mathcal{R}_s$, where
\begin{equation*}
\mathcal{R}_s=\int_{0}^1\{(1-t)\{N_s,F_s\}+R_{0,s}+R_{1,s},F_s\}\circ X_{F_s}^{t}dt+\int_0^1\{R_{2,s},F_s\}\circ X_{F_s}^tdt.
\end{equation*}
Write
\begin{equation*}
R_{s+1}=R_{0,s+1}+R_{1,s+1}+R_{2,s+1},
\end{equation*}
and write $\mathcal{R}_s$ as
\begin{eqnarray}
\mathcal{R}_s
&=&\label{051510}\int_0^1\{R_{0,s},F_s\}\circ X_{F_s}^tdt\\
&&\label{051511}+\int_0^1\{R_{1,s},F_s\}\circ X_{F_s}^tdt\\
&&\label{051513}+\int_0^1\{R_{2,s},F_s\}\circ X_{F_s}^tdt\\
&&\label{051520}+\int_0^1(1-t)\{\{N_s,F_s\},F_s\}\circ X_{F_s}^tdt.
\end{eqnarray}
Firstly note that the term $(\ref{051510})$ contributes to $R_{0,s+1},R_{1,s+1},R_{2,s+1}$ and we get
\begin{eqnarray}
&&\nonumber\left|\left|\int_0^1\{R_{0,s},F_s\}\circ X_{F_s}^tdt\right|\right|_{\rho_s+2\delta_s}
\nonumber\\ \nonumber&=&\left|\left|\sum_{n\geq1}\frac{1}{n!}\underbrace{\left\{\cdots\left\{R_{0,s},{F_s}\right\},\cdots,{F_s}\right\}}_{n-\mbox{fold}}\right|\right|_{\rho_s+2\delta_s}\\
\nonumber&\leq& \frac{2e}{\delta_s}\exp\left\{\frac{2000}{\delta_s}\cdot\exp\left\{\left(\frac{200}{\delta_s}\right)^{\frac1{\sigma-1}}\right\}\right\}||{F_s}||_{\rho_s+\delta_s}||R_{0,s}||_{\rho_s+\delta_s}\ \\
&&\nonumber \mbox{(following the proof of Lemma \ref{E1})}\\
&\leq&\epsilon_s^{1.54}\label{051203}
\end{eqnarray}
where using (\ref{200}), (\ref{022501}) and the fact
\begin{eqnarray*}
\frac{2e}{\delta_s}\exp\left\{\frac{2000}{\delta_s}\cdot\exp\left\{\left(\frac{200}{\delta_s}\right)^{\frac1{\sigma-1}}\right\}\right\}
\leq \epsilon_s^{-0.01},
\end{eqnarray*}
which follows from the proof of (\ref{022402}).
Consequently from (\ref{N6}) one has
\begin{eqnarray}
&&\nonumber\left|\left|\int_0^1\{R_{0,s},F_s\}\circ X_{F_s}^tdt\right|\right|_{\rho_s+3\delta_s}^{+}\\
\nonumber&\leq&\exp\left\{3\left(\frac6{\delta_s}\right)^{\frac1{\sigma-1}}\cdot \exp\left\{\left(\frac6{\delta_s}\right)^{\frac1{\sigma}}\right\} \right\}\left|\left|\int_0^1\{R_{0,s},F_s\}\circ X_{F_s}^tdt\right|\right|_{\rho_s+2\delta_s}\\
&\leq&\epsilon_s^{1.53}\label{N11},
\end{eqnarray}
where using (\ref{051203}) and noting that
\begin{equation*}
\exp\left\{3\left(\frac6{\delta_s}\right)^{\frac1{\sigma-1}}\cdot \exp\left\{\left(\frac6{\delta_s}\right)^{\frac1{\sigma}}\right\} \right\}\leq \epsilon_s^{-0.01},
\end{equation*}which follows from the proof of (\ref{022402}).

Secondly, we consider the term (\ref{051511}) and write
\begin{eqnarray}
(\ref{051511})&=&
\nonumber\sum_{n\geq1}\frac{1}{n!}\underbrace{\{\cdots\{R_{1,s},{F_s}\},{F_s},\cdots,{F_s}\}}_{n-\mbox{fold}}\\
&=&\label{051505}\sum_{n\geq1}\frac{1}{n!}\underbrace{\{\cdots\{R_{1,s},{F}_{0,s}\},{F_s},\cdots,{F_s}\}}_{(n-1)-\mbox{fold}}\\
&&\label{051512}+\{R_{1,s},F_{1,s}\}\\
&&\label{051506}+\sum_{n\geq2}\frac{1}{n!}\underbrace{\{\cdots\{R_{1,s},F_{1,s}\},{F_s},\cdots,{F_s}\}}_{(n-1)-\mbox{fold}}.
\end{eqnarray}
Note that (\ref{051505}) and (\ref{051506}) contribute to $R_{0+},R_{1+},R_{2+}$, and (\ref{051512}) contributes to $R_{1+},R_{2+}$.

Moreover, following the proof of (\ref{N11}), one has
\begin{eqnarray}
\label{N12}\left|\left|(\ref{051505})\right|\right|_{\rho_s+3\delta_s}^{+},\left|\left|(\ref{051506})\right|\right|_{\rho_s+3\delta_s}^{+}
&\leq& \epsilon_{s}^{1.53},\\
\left|\left|(\ref{051512})\right|\right|_{\rho_s+3\delta_s}^{+}&\leq&\epsilon_s^{1.1}.
\end{eqnarray}
Thirdly, we consider the term (\ref{051513}) and write
\begin{eqnarray}
(\ref{051513})&=&\nonumber\sum_{n\geq1}\frac{1}{n!}\underbrace{\{\cdots\{R_{2,s},{F_s}\},{F_s},\cdots,{F_s}\}}_{n-\mbox{fold}}\\
&=&\label{051514}\{R_{2,s},F_{0,s}\}\\
&&+\label{051515}\{R_{2,s},F_{1,s}\}\\
&&+\label{051516}\sum_{n\geq2}\frac{1}{n!}\underbrace{\{\cdots\{R_{2,s},{F}_{0,s}\},{F_s},\cdots,{F_s}\}}_{(n-1)-\mbox{fold}}\\
&&\label{051517}+\{\{R_{1,s},F_{1,s}\},F_s\}\\
&&\label{051518}+\sum_{n\geq3}\frac{1}{n!}\underbrace{\{\cdots\{R_{2,s},F_{1,s}\},{F_s},\cdots,{F_s}\}}_{(n-1)-\mbox{fold}}.
\end{eqnarray}
Note that (\ref{051514}) and (\ref{051517}) contribute to $R_{1+},R_{2+}$, (\ref{051516}) and (\ref{051518}) contribute to $R_{0+},R_{1+},R_{2+}$, and (\ref{051515}) contributes to $R_{2+}$.

Similarly, one has
\begin{eqnarray}
\label{N15}||(\ref{051514})||_{\rho_s+3\delta_s}^{+},\left|\left|(\ref{051517})\right|\right|_{\rho_s+3\delta_s}^{+}&\leq& \epsilon_s^{0.93},\\
\label{N16}\left|\left|(\ref{051516})\right|\right|_{\rho_s+3\delta_s}^{+},\left|\left|(\ref{051518})\right|\right|_{\rho_s+3\delta_s}^{+}
&\leq& \epsilon_s^{1.53}
\end{eqnarray}
and
\begin{equation}
\label{N17}||(\ref{051515})||_{\rho_s+3\delta_s}^{+}
\leq\epsilon_s^{0.5}.
\end{equation}
Finally, we consider the term (\ref{051520}) and
write
\begin{eqnarray}
\nonumber(\ref{051520})&=&\sum_{n\geq2}\frac{1}{n!}\underbrace{\{\cdots\{N_s,{F_s}\},{F_s},\cdots,{F_s}\}}_{n-\mbox{fold}}\\
&=&\nonumber\sum_{n\geq2}\frac{1}{n!}\underbrace{\{\cdots\{-R_{0,s}-R_{1,s}+[R_{0,s}]+[R_{1,s}]\},{F},\cdots,{F}\}}_{(n-1)-\mbox{fold}}\qquad \\
&&\nonumber\mbox{(in view of (\ref{4.27}))}\\
&=&\label{051530}\sum_{n\geq2}\frac{1}{n!}\underbrace{\{\cdots\{-R_{0,s}+[R_{0,s}],{F_s}\},{F_s},\cdots,{F_s}\}}_{(n-1)-\mbox{fold}}\\
&&+\label{051531}\sum_{n\geq2}\frac{1}{n!}\underbrace{\{\cdots\{-R_{1,s}+[R_{1,s}],{F}_{0,s}\},{F_s},\cdots,{F_s}\}}_{(n-2)-\mbox{fold}}\\
&&\label{051532}+\{-R_{1,s}+[R_{1,s}],F_{1,s}\}\\
&&\label{051533}+\sum_{n\geq3}\frac{1}{n!}\underbrace{\{\cdots\{-R_{1,s}+[R_{1,s}],F_{1,s}\},{F_s},\cdots,{F_s}\}}_{(n-1)-\mbox{fold}}.
\end{eqnarray}

Note that (\ref{051530}),  (\ref{051531})  and (\ref{051533}) contribute to $R_{0+},R_{1+},R_{2+}$, and (\ref{051532}) contributes to $R_{1+},R_{2+}$.

Moreover, one has
\begin{eqnarray}
\left|\left|(\ref{051530})\right|\right|_{\rho_s+3\delta_s}^{+},\left|\left|(\ref{051531})\right|\right|_{\rho_s+3\delta_s}^{+},\left|\left|(\ref{051533})\right|\right|_{\rho_s+3\delta_s}^{+}
&\leq&\epsilon_s^{1.53}
\end{eqnarray}
and
\begin{equation}
\left|\left|(\ref{051532})\right|\right|_{\rho_s+3\delta_s}^{+}\leq\epsilon_s^{0.93}.
\end{equation}
Consequently we get
\begin{eqnarray*}
||R_{0,s+1}||_{\rho_{s+1}}^{+}
&\leq&10\epsilon_s^{1.53}\leq \epsilon_{s+1},\\
||R_{1,s+1}||_{\rho_{s+1}}^{+}
&\leq& 10\epsilon_s^{0.93}\leq \epsilon_{s+1}^{0.6}
\end{eqnarray*}
and
\begin{eqnarray*}
||R_{2,s+1}||_{\rho_{s+1}}^{+}
\leq(1+d_s)\epsilon_0+\epsilon_s^{0.5}\leq(1+d_{s+1})\epsilon_0,
\end{eqnarray*}
which are just the assumptions (\ref{200})-(\ref{202}) at stage $s+1$.

In view of (\ref{051402}), the new normal form $N_{s+1}$ is given by
\begin{equation}
N_{s+1}=N_s+[R_{0,s+1}]+[R_{1,s+1}].
\end{equation}Note that $[R_{0,s}]$ (by (\ref{051501})) is a constant which does not affect the Hamiltonian vector field. Moreover, in view of (\ref{051502}), we denote by
\begin{equation}
\omega_{m,s}=m^2+\widetilde V_{m,s}+\sum_{a\in\mathbb{N}^{\mathbb{Z}}}B_{a00}^{(m)}\mathcal{M}_{a00},
\end{equation}
where the term $$\sum_{a\in\mathbb{N}^{\mathbb{Z}}}B_{a00}^{(m)}\mathcal{M}_{a00}$$ is the so-called frequency shift which will be estimated below.
For any $m\in\mathbb{Z}$, one has
\begin{eqnarray}
\nonumber\left|B^{(m)}_{a00}\right|
\nonumber&\leq& \left\|R_{1,s+1}\right\|_{\rho_{s+1}}^+e^{2\rho_{s+1}\left(\sum_{n\in\mathbb{Z}}a_n\ln^{\sigma}\lfloor n\rfloor+\ln^{\sigma}\lfloor m\rfloor-\ln^{\sigma}\lfloor n_1^{*}\rfloor\right)}\\
\label{M12}&<&\epsilon_{s+1}e^{2\rho_{s+1}\left(\sum_{n\in\mathbb{Z}}a_n\ln^{\sigma}\lfloor n\rfloor+\ln^{\sigma}\lfloor m\rfloor-\ln^{\sigma}\lfloor n_1^{*}\rfloor\right)}.
\end{eqnarray}
In view of (\ref{M13}) and (\ref{M12}) ,
we obtain
\begin{eqnarray}
&&\nonumber\left|\sum_{a\in\mathbb{N}^{\mathbb{Z}}}B^{(m)}_{a00}\mathcal{M}_{a00}\right|\\
\nonumber&\leq & \epsilon_{s+1}\sum_{a\in\mathbb{N}^{\mathbb{Z}}}e^{2\rho_{s+1}\left(\sum_{n\in\mathbb{Z}}a_n\ln^{\sigma}\lfloor n\rfloor+\ln^{\sigma}\lfloor m\rfloor-\ln^{\sigma}\lfloor n_1^{*}\rfloor\right)}\cdot e^{-\sum_{n\in\mathbb{Z}}2 a_n\ln^{\sigma}\lfloor n\rfloor}\\
%\nonumber&\leq& \epsilon_{s+1}\sum_{a\in\mathbb{N}^{\mathbb{Z}}}e^{2\rho_{s+1}\left(\sum_{n\in\mathbb{Z}}a_n\ln^{\sigma}\lfloor n\rfloor\right)}\times -\ \mbox{(in view of (\ref{M13}))}\\
\nonumber&\leq& \epsilon_{s+1}\sum_{a\in\mathbb{N}^{\mathbb{Z}}}e^{-\left(\sum_{n\in\mathbb{Z}}a_n\ln^{\sigma}\lfloor n\rfloor\right)}\qquad\qquad\qquad \mbox{(in view of $\lfloor m\rfloor\leq \lfloor n_1^*\rfloor$ and $2\rho_{s+1}<1$)}\\
\nonumber&\leq& \epsilon_{s+1}\prod_{n\in\mathbb{Z}}\left(1-e^{- \ln^{\sigma}\lfloor n\rfloor}\right)^{-1} \qquad\qquad\qquad \mbox{(by Lemma \ref{a3})}\\
\label{M15}&\leq&\epsilon_{s+1}\exp\left\{{18}\cdot \exp\left\{4^{\frac1{\sigma-1}}\right\}\right\},
\end{eqnarray}
where the last inequality is based on Lemma \ref{a5}.

Next,
%let $S=\mathcal{C}_{\frac{1}{10}\lambda_s\eta_s}\left(V_s^*\right)$  and
if $V\in \mathcal{C}_{\frac{\eta_s}{2}}\left(V_s^*\right)$, by using Cauchy's estimate implies
\begin{eqnarray}
\nonumber\sum_{n\in\mathbb{Z}}\left|\frac{\partial \widetilde{V}_{m,s}}{\partial V_n}(V)\right|
\nonumber&\leq& \frac{2}{\eta_s}\left\|\widetilde{V}_s\right\|_\infty\\
\label{M11}&<&10 \eta_s^{-1}\ \ \mbox{(since $\left\|\widetilde{V}_s\right\|_\infty\leq 2 $)} \ \mbox{for all $m$},
\end{eqnarray}
and let $X\in \mathcal{C}_{\frac{1}{10}\lambda_s\eta_s}\left(V_s^*\right)$, then
\begin{eqnarray*}
\left\|\widetilde{V}_s(X)-\omega\right\|_{\infty}
&=&\left\|\widetilde{V}_s(X)-\widetilde{V}_s\left(V_s^*\right)\right\|_{\infty}\\
& \leq&\sup_{\mathcal{C}_{\frac{1}{10}\lambda_s\eta_s}\left(V_s^*\right)}\left|\left|\frac{\partial \widetilde{V}_s}{\partial V}\right|\right|_{l^{\infty}\rightarrow l^{\infty}}\cdot\left\|X-V_s^*\right\|_{\infty}\\
&<&10 \eta_s^{-1}\cdot\frac{1}{10}\lambda_s\eta_s\qquad \qquad  \ \mbox{(in view of (\ref{M11}))}\\
&=&\lambda_s,
\end{eqnarray*}
that is
\begin{equation*}
\widetilde{V}_s\left(\mathcal{C}_{\frac{1}{10}\lambda_s\eta_s}\left(V_s^*\right)\right)\subseteq \mathcal{C}_{\lambda_s}\left(\omega\right).
\end{equation*}
By (\ref{M15}), we have
\begin{eqnarray}
\left|\widetilde{V}_{m,s+1}-\widetilde{V}_{m,s}\right|
<\epsilon_{s+1}\cdot\exp\left\{{18}\cdot \exp\left\{4^{\frac1{\sigma-1}}\right\}\right\}
\label{M16}<0.9\epsilon_{s+1},
\end{eqnarray}
which verifies (\ref{206}). Further applying Cauchy's estimate on $\mathcal{C}_{\lambda_s\eta_s}\left(V_s^*\right)$, one gets
\begin{eqnarray}\label{633}
\sum_{n\in\mathbb{Z}}\left|\frac{\partial \widetilde{V}_{m,s+1}}{\partial V_n}-\frac{\partial \widetilde{V}_{m,s}}{\partial V_n}\right|
\leq\frac{\left\|\widetilde{V}_{s+1}-\widetilde{V}_{s}\right\|_\infty}{\lambda_s\eta_s}
\label{M17}\leq\frac{0.9\epsilon_{s+1}}{\lambda_s\eta_s}.
%\nonumber&\leq& e^{C(\theta)(\ln\frac{1}{\epsilon_{s+1}})^{\frac{4}{4+\theta}}-\frac13\ln\frac{1}{\epsilon_{s+1}}}\left(\frac{1}{\eta_s}\right)\\
%\nonumber&\leq& e^{-\frac14\ln\frac{1}{\epsilon_{s+1}}}\left(\frac{1}{\eta_s}\right)\ \ \mbox{(for $\epsilon_0$ small enough)}\\
%&=& \frac{1}{\eta_s}\epsilon_{0}^{\frac{1}{4}(\frac{3}{2})^{s+1}}.
\end{eqnarray}
Since
\begin{equation*}
\eta_{s+1}=\frac{1}{20}\lambda_s\eta_s,
\end{equation*}
hence one has
\begin{eqnarray}
\nonumber\lambda_s\eta_{s}&=&20\prod_{i=0}^{s}\left(\frac1{20}\lambda_i\right)\\
\nonumber&=&20\prod_{i=0}^{s}\left(\frac1{20}\epsilon_i^{0.01}\right)\\
\nonumber&\geq&20\prod_{i=0}^{s}\epsilon_i^{0.02}\\
\label{M19}&\geq&20\epsilon_{0}^{0.06(\frac{3}{2})^{s}}.
\end{eqnarray}
On $ \mathcal{C}_{\frac{1}{10}\lambda_s\eta_s}\left(V_s^*\right)$ and for any $m\in\mathbb{Z}$, we deduce from (\ref{M17}), (\ref{M19}) and the assumption (\ref{199}) that
\begin{eqnarray*}
\sum_{n\in\mathbb{Z}}\left|\frac{\partial \widetilde{V}_{m,s+1}}{\partial V_n}-\delta_{mn}\right|
&\leq&\sum_{n\in\mathbb{Z}}\left|\frac{\partial \widetilde{V}_{m,s+1}}{\partial V_n}-\frac{\partial \widetilde{V}_{m,s}}{\partial V_n}\right|+\sum_{n\in\mathbb{Z}}\left|\frac{\partial \widetilde{V}_{m,s}}{\partial V_n}-\delta_{mn}\right|\\
&\leq&\epsilon_{0}^{(\frac{1}{20})(\frac{3}{2})^{s+1}}+d_s\epsilon_{0}^{\frac{1}{10}}\\
&<&d_{s+1}\epsilon_{0}^{\frac{1}{10}},
\end{eqnarray*}
and consequently
\begin{equation}\label{M20}
\left|\left|\frac{\partial \widetilde{V}_{s+1}}{{\partial V}}-Id\right|\right|_{l^{\infty}\rightarrow l^{\infty}}<d_{s+1}\epsilon_{0}^{\frac{1}{10}},
\end{equation}
which verifies (\ref{199}) for $s+1$.

Finally, we will freeze $\omega$ by invoking an inverse function theorem. Consider the following functional equation
\begin{equation}\label{M21}
\widetilde{V}_{s+1}\left(X\right)=\omega, \qquad  X\in \mathcal{C}_{\frac{1}{10}\lambda_s\eta_s}\left(V_s^*\right),
\end{equation}
from (\ref{M20}) and the standard inverse function theorem implies (\ref{M21}) having a solution $V_{s+1}^*$, which verifies (\ref{198}) for $s+1$. Rewriting (\ref{M21}) as
\begin{equation}\label{M22}
V_{s+1}^*-V_s^*=\left(I-\widetilde{V}_{s+1}\right)\left(V_{s+1}^*\right)-\left(I-\widetilde{V}_{s+1}\right)\left({V_s}^*\right)+\left(\widetilde{V}_s-\widetilde{V}_{s+1}\right)\left(V_s^*\right),
\end{equation}
and by using (\ref{M16}) and (\ref{M20}) one has
\begin{equation*}\label{M23}
\left\|V_{s+1}^*-V_s^*\right\|_{\infty}\leq \left(1+d_{s+1}\right)\epsilon_{0}^{\frac{1}{10}}\left\|V_{s+1}^*-V_s^*\right\|_{\infty}+0.9\epsilon_{s+1}<2\epsilon_{s+1}\leq  \lambda_s\eta_s,
\end{equation*}where the last inequality is based on (\ref{M19}),
which verifies (\ref{205}) and completes the proof of the iterative lemma.
\end{proof}

\subsection{Convergence}

We are now in a position to prove the convergence. To apply iterative lemma with $s=0$, set
\begin{equation}\label{031940}
V_0^*=\omega,\hspace{12pt}\widetilde{V}_0=id,\hspace{12pt}\epsilon_0=C\epsilon,
\end{equation}
and consequently (\ref{198})-(\ref{202}) with $s=0$ are satisfied. Hence, the iterative lemma applies, and we obtain a decreasing
sequence of domains $D_{s}\times\mathcal{C}_{\eta_{s}}\left(V_{s}^*\right)$ and a sequence of
transformations
\begin{equation*}
\Phi^s=\Phi_1\circ\cdots\circ\Phi_s:\hspace{6pt}D_{s}\times\mathcal{C}_{\eta_{s}}\left(V_{s}^*\right)\rightarrow D_{0}\times\mathcal{C}_{\eta_{0}}(V_{0}),
\end{equation*}
such that $H\circ\Phi^s=N_s+R_s$ for $s\geq1$. Moreover, the
estimates (\ref{203})-(\ref{205}) hold. Thus we can show $V_s^*$ converge to a limit $V^*$ with the estimate
\begin{equation*}
||V^*-\omega||_{\infty}\leq\sum_{s=0}^{\infty}2\epsilon_{s}^{0.5}<\epsilon_{0}^{0.4},
\end{equation*}
and $\Phi^s$ converge uniformly on $D_*\times\{V_*\}$, where $$D_*=\left\{(q_n)_{n\in\mathbb{Z}}:\frac{2}{3}\leq|q_n|e^{\ln^{\sigma}\lfloor n\rfloor}\leq\frac{5}{6}\right\},$$ to $\Phi:D_*\times\{V^*\}\rightarrow D_0$ with the estimates
\begin{eqnarray}
\nonumber&&||\Phi-id||_{\sigma,\infty}\leq \epsilon_{s}^{0.4},\\
\nonumber&&||D\Phi-Id||_{(\sigma,\infty)\rightarrow(\sigma,\infty)}\leq \epsilon_{s}^{0.4}.
\end{eqnarray}
Hence
\begin{equation}\label{060101}
H_*=H\circ\Phi=N_*+R_{2,*},
\end{equation}
where
\begin{equation}
N_*=\sum_{n\in\mathbb{Z}}(n^2+\omega_n)|q_n|^2
\end{equation}
and
\begin{equation}\label{062811}
||R_{2,*}||_{0.1}^{+}\leq\frac{7}{6}\epsilon_0.
\end{equation}
By (\ref{050907}), the Hamiltonian vector field $X_{R_{2,*}}$ is a bounded map from $\mathfrak{H}_{\sigma,\infty}$ into $\mathfrak{H}_{\sigma,\infty}$. Taking \begin{equation}\label{072701}
I_n(0)=\frac{3}{4}e^{-2\ln^{\sigma}\lfloor n\rfloor},
 \end{equation}we get an invariant torus $\mathcal{T}$ with frequency $(n^2+\omega_n)_{n\in\mathbb{Z}}$ for ${X}_{H_*}$.

\subsection{Proof of Theorem \ref{031930}}
\begin{proof}
Then the  Hamiltonian (\ref{L4}) has the form of
\begin{equation*}\label{H}
	H(q,\bar q)=H_{2}(q,\bar q)+ R(q,\bar q)
\end{equation*}
where
\begin{equation*}
H_{2}(q,\bar q)=\sum_{n\in\mathbb{Z}}\left(n^2+V_n\right)\left|q_n\right|^2
\end{equation*}
and
\begin{equation*}
R(q,\bar q)=\epsilon\sum_{a,k,k'\in\mathbb{N}^{\mathbb{Z}}\atop |a|=0,|k|+|k'|=6}B_{akk'}\mathcal{M}_{akk'}.
\end{equation*}
Then one has
\begin{equation*}
\left\|R\right\|_{\rho_0}\leq \left\|R\right\|_{0}\leq C\epsilon:=\epsilon_0,
\end{equation*}
where $C$ is a  positive constant.

Then the assumptions (\ref{031940}) in iterative lemma for $s=0$ hold.  Applying iterative lemma,  $\Phi(\mathcal{T})$ is the desired invariant torus for the Hamiltonian (\ref{L4}) of NLS (\ref{L1}). Moreover, we deduce the torus $\Phi(\mathcal{T})$ is linearly stable from the fact that (\ref{060101}) is a normal form of order 2 around the invariant torus.
\end{proof}
\section{Appendix}
\begin{lem}\label{122203}
Given any $\sigma>1$, there exists a constant $c(\sigma)>1$ depending on $\sigma$ only such that
\begin{equation}\label{122201}
\ln^{\sigma}(x+y)-\ln^{\sigma}x-\frac12\ln^{\sigma}y\leq 0, \quad \mbox{for}\ c(\sigma)\leq y\leq x.
\end{equation}
\end{lem}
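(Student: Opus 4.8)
The plan is to prove the equivalent inequality
$$\ln^{\sigma}(x+y)-\ln^{\sigma}x\le\tfrac12\ln^{\sigma}y,\qquad x\ge y\ge c(\sigma),$$
and to pin down the constant $c(\sigma)$ only at the very end. First I would estimate the increment on the left-hand side. Writing $\ln(x+y)=\ln x+\ln(1+y/x)\le\ln x+y/x$ and using that $f(t)=t^{\sigma}$ is increasing with increasing derivative $\sigma t^{\sigma-1}$ (recall $\sigma>1$), the mean value theorem gives
$$\ln^{\sigma}(x+y)-\ln^{\sigma}x\le\left(\ln x+\frac{y}{x}\right)^{\sigma}-\ln^{\sigma}x\le\sigma\left(\ln x+\frac{y}{x}\right)^{\sigma-1}\frac{y}{x}.$$
Since $y\le x$ gives $y/x\le1$, and $x\ge y\ge c(\sigma)\ge e$ gives $\ln x\ge1$, the bracket is at most $2\ln x$, so the right-hand side is at most $\sigma 2^{\sigma-1}(\ln x)^{\sigma-1}\,y/x$.

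The crucial step is to absorb the factor $(\ln x)^{\sigma-1}\,y/x$; this is precisely where a cruder bound such as $\ln(x+y)\le\ln2+\ln x$ would be too wasteful in the regime $y\ll x$, since then $\ln^{\sigma}(x+y)-\ln^{\sigma}x$ could grow like $(\ln x)^{\sigma-1}$ while $\tfrac12\ln^{\sigma}y$ stays bounded. The function $t\mapsto(\ln t)^{\sigma-1}/t$ has derivative with the sign of $(\sigma-1)-\ln t$, hence is decreasing on $[e^{\sigma-1},\infty)$. Therefore, as soon as $c(\sigma)\ge e^{\sigma-1}$, for all $x\ge y\ge c(\sigma)$ one has $(\ln x)^{\sigma-1}/x\le(\ln y)^{\sigma-1}/y$, and consequently $\sigma 2^{\sigma-1}(\ln x)^{\sigma-1}\,y/x\le\sigma 2^{\sigma-1}(\ln y)^{\sigma-1}$, a bound uniform in the ratio $x/y$.

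It then remains to choose $c(\sigma)$ large enough that $\sigma 2^{\sigma-1}(\ln y)^{\sigma-1}\le\tfrac12(\ln y)^{\sigma}$, i.e. $\ln y\ge\sigma 2^{\sigma}$. Taking $c(\sigma)=e^{\sigma 2^{\sigma}}$ does the job, and since $\sigma 2^{\sigma}\ge2$ for all $\sigma>1$ this choice automatically satisfies $c(\sigma)>1$, $c(\sigma)\ge e$ and $c(\sigma)\ge e^{\sigma-1}$, so the two hypotheses used above hold. Chaining the three estimates yields $\ln^{\sigma}(x+y)-\ln^{\sigma}x\le\tfrac12\ln^{\sigma}y$, which is exactly (\ref{122201}). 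The only mildly subtle ingredient is the monotonicity of $t\mapsto(\ln t)^{\sigma-1}/t$; everything else is elementary, resting on $\ln(1+s)\le s$ and the convexity/monotonicity of $t\mapsto t^{\sigma}$.
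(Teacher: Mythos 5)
Your proof is correct. Each step checks out: the bound $\ln(x+y)\le\ln x+y/x$, the mean value estimate $\bigl(\ln x+y/x\bigr)^{\sigma}-(\ln x)^{\sigma}\le\sigma\bigl(\ln x+y/x\bigr)^{\sigma-1}(y/x)$ (valid because the derivative of $t\mapsto t^{\sigma}$ is increasing for $\sigma>1$), the absorption of the bracket into $2\ln x$ once $\ln x\ge 1$, the monotonicity of $t\mapsto(\ln t)^{\sigma-1}/t$ on $[e^{\sigma-1},\infty)$, and the final choice $c(\sigma)=e^{\sigma 2^{\sigma}}$, which indeed dominates both $e$ and $e^{\sigma-1}$. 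The route differs from the paper's in how the regime $x\gg y$ is controlled. The paper substitutes $x=ty$, divides through by $\ln^{\sigma}y$, applies the mean value theorem to get a bound involving $\frac{\sigma}{\ln y}\bigl(1+\frac{\ln(1+t)}{\ln y}\bigr)^{\sigma-1}\ln\frac{1+t}{t}$, and then splits into two cases according to whether $\ln(1+t)\le\ln y$ or not, using in the second case that $\ln^{\sigma-1}(1+t)\ln\frac{1+t}{t}$ is bounded uniformly in $t\ge1$. You instead keep the sharper linearization $\ln(1+y/x)\le y/x$ from the start and convert the dangerous factor $(\ln x)^{\sigma-1}y/x$ into $(\ln y)^{\sigma-1}$ via the monotonicity of $(\ln t)^{\sigma-1}/t$, which eliminates the case analysis entirely and yields a single clean condition $\ln y\ge\sigma 2^{\sigma}$ determining $c(\sigma)$. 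Your constant is comparable in size to the paper's $c_2(\sigma)$, and since the rest of the paper only uses the existence of some $c(\sigma)$ with property (\ref{122201}), nothing downstream is affected; arguably your argument is slightly cleaner.
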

\begin{proof}
Write $x=ty$ with $t\geq 1$, and then
\begin{eqnarray*}
&&\ln^{\sigma}(x+y)-\ln^{\sigma}x-\frac12\ln^{\sigma}y\\
&=&\ln^{\sigma}(ty+y)-\ln^{\sigma}(ty)-\frac12\ln^{\sigma}y\\
&=&\ln^{\sigma}y\left(\left(1+\frac{\ln(1+t)}{\ln y}\right)^{\sigma}-\left(1+\frac{\ln t}{\ln y}\right)^{\sigma}-\frac12\right).
\end{eqnarray*}
To prove (\ref{122201}), it suffices to show that
\begin{equation}\label{122202}
\sup_{t\geq 1,y\geq c(\sigma)}\left(\left(1+\frac{\ln(1+t)}{\ln y}\right)^{\sigma}-\left(1+\frac{\ln t}{\ln y}\right)^{\sigma}\right)\leq \frac12.
\end{equation}
Using differential mean value theorem, one has
\begin{eqnarray*}
\left(1+\frac{\ln(1+t)}{\ln y}\right)^{\sigma}-\left(1+\frac{\ln t}{\ln y}\right)^{\sigma}
\leq \frac{\sigma}{\ln y}\left(1+\frac{\ln(1+t)}{\ln y}\right)^{\sigma-1}\ln \left(\frac{1+t}{t}\right).
\end{eqnarray*}

\textbf{Case 1.} $\frac{\ln(1+t)}{\ln y}\leq 1.$

Then one has
\begin{equation*}
\frac{\sigma}{\ln y}\left(1+\frac{\ln(1+t)}{\ln y}\right)^{\sigma-1}\ln \left(\frac{1+t}{t}\right)\leq \frac{\sigma\cdot 2^{\sigma-1}}{\ln y}\cdot \ln 2,
\end{equation*}
where using
\begin{equation*}
\ln \left(\frac{1+t}{t}\right)\leq \ln 2.
\end{equation*}
Taking
\begin{equation*}
c_1(\sigma)=\exp\left\{\sigma\cdot 2^{\sigma}\cdot\ln 2 \right\},
\end{equation*}
we finish the proof of (\ref{122202}) in view of $y\geq c_1(\sigma)$.

\textbf{Case 2.} $\frac{\ln(1+t)}{\ln y}>1.$

Then one has
\begin{eqnarray*}
&&\frac{\sigma}{\ln y}\left(1+\frac{\ln(1+t)}{\ln y}\right)^{\sigma-1}\ln \left(\frac{1+t}{t}\right)\\
&\leq& \frac{\sigma\cdot 2^{\sigma-1}}{\ln^{\sigma}y}\cdot \ln^{\sigma-1}(1+t)\cdot \ln\left(\frac{1+t}{t}\right)\\
&\leq&\frac{\sigma\cdot 2^{\sigma-1}}{\ln^{\sigma}y}\cdot 2\left(\frac{\sigma-1}e\right)^{\sigma-1},
\end{eqnarray*}
where noting that
\begin{equation*}
\max_{t\geq 1}\left(\ln^{\sigma-1}(1+t)\cdot \ln\left(\frac{1+t}{t}\right)\right)\leq 2\left(\frac{\sigma-1}e\right)^{\sigma-1}:=c^*(\sigma).
\end{equation*}
Taking
\begin{equation*}
c_2(\sigma)=\exp\left\{{\left(\sigma\cdot 2^{\sigma}\cdot c^*(\sigma)\right)^{\frac1{\sigma}}}\right\},
\end{equation*}
we finish the proof of (\ref{122202}) in view of $y\geq c_2(\sigma)$.

Finally letting
\begin{equation}\label{030402}
c(\sigma)=\max \{c_1(\sigma),c_2(\sigma)\},
\end{equation}
we finish the proof of (\ref{122201}).
\end{proof}
%\begin{rem}
%Letting $\sigma\rightarrow 1$, then one has $c(\sigma)\rightarrow 4$ which follows from that $c_1(\sigma)\rightarrow 4$ and $c_2(\sigma)\rightarrow 4$ as $\sigma\rightarrow1$.
%\end{rem}
\begin{lem}\label{a1}Let $k=(k_n)_{n\in\mathbb{Z}},k'=(k'_n)_{n\in\mathbb{Z}}\in\mathbb{N}^{\mathbb{Z}}$ with $4\leq |k|+|k'|<\infty$ and $\widetilde V=\left(\widetilde V_n\right)_{n\in\mathbb{Z}}\in\mathbb{R}^{\mathbb{Z}}$ satisfy $\left|\widetilde{V}_n\right|\leq2\ \mbox{for}\ \forall \ n\in\mathbb{Z}$.
Assume further
\begin{equation}
\label{041803}\left|\sum_{n\in\mathbb{Z}}\left(k_n-k_n'\right)\left(n^2+\widetilde V_n\right)\right|\leq1
\end{equation}
and
\begin{equation}
\label{041803'}\sum_{n\in\mathbb{Z}}(k_n-k_n')n=0.
\end{equation}
Then one has
\begin{equation}\label{041809'}
\sum_{n\in\mathbb{Z}}|k_n-k_n'|\ln^{\sigma}\lfloor n\rfloor\leq3 \cdot 4^{\sigma} \sum_{i\geq 3}\ln^{\sigma}\lfloor n_i\rfloor,
\end{equation}
where $(n_i)_{1\leq i\leq m}$ denotes the system \{$|n|$: $n$ is repeated $k_n+k'_n$ times\} which satisfies $|n_1|\geq|n_2|\geq|n_3|\geq\cdots\geq |n_{m}|$ with $m=|k|+|k'|$. For short let $$\sum_{i\geq 3}=\sum_{3\leq i\leq m}.$$
\end{lem}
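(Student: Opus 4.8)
The plan is to convert the momentum constraint (\ref{041803'}) and the ``almost energy conservation'' contained in (\ref{041803}) into a bound for the two largest indices $n_1,n_2$ in terms of $n_3,n_4,\dots$, and then to upgrade this to the logarithmic inequality (\ref{041809'}) by iterating the subadditivity estimate (\ref{122201}) of Lemma \ref{122203}, exactly as in the derivation of (\ref{002}).

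First I would make a preliminary reduction. Write $d_n=k_n-k_n'$, so that the left side of (\ref{041809'}) is $\sum_n|d_n|\ln^{\sigma}\lfloor n\rfloor$. A conjugate pair, i.e.\ a $q_n$-factor together with a $\bar q_n$-factor at one and the same index $n$, contributes $0$ to this sum (it does not change $d_n$), and likewise $0$ to the momentum sum in (\ref{041803'}) and to the divisor in (\ref{041803}). Hence, whenever the two largest slots of the current list $(n_i)$ arise from such a conjugate pair, I delete it and iterate; this preserves (\ref{041803}), (\ref{041803'}) and the left side of (\ref{041809'}), while removing only top elements of $(n_i)$, so $\sum_{i\ge3}\ln^{\sigma}\lfloor n_i\rfloor$ can only decrease. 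After finitely many steps I am reduced to a list of length $\widetilde m\le m$ whose two largest slots are not a conjugate pair. If $\widetilde m\le2$, a direct inspection of (\ref{041803}) and (\ref{041803'}) shows every index present is $\le1$ (for instance, when $\widetilde m=2$ momentum forces $n_1=n_2$, and then either the two momentum signs agree, giving $n_1=n_2=0$, or --- the conjugate-pair case being excluded --- the divisor bound reads $2n_1^2\le5$); so the left side of (\ref{041809'}) is $\le2\ln^{\sigma}c(\sigma)$, which is dominated by $3\cdot4^{\sigma}\sum_{i\ge3}\ln^{\sigma}\lfloor n_i\rfloor$ because $m\ge4$ forces that sum to be $\ge2\ln^{\sigma}c(\sigma)$ for the original list. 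So assume henceforth $\widetilde m\ge3$ and, after relabelling, that $(n_i)$ denotes this reduced list.

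Next comes the sign analysis on the top two slots. Listing the $\widetilde m$ slots with their momentum signs gives $\sum_i\mu_in_i=0$ with $\mu_i\in\{\pm1\}$, while (\ref{041803}) together with $|\widetilde V_n|\le2$ gives $\big|\sum_i\nu_in_i^2\big|\le1+2\widetilde m$, where $\nu_i=+1$ or $-1$ according as slot $i$ is of $q$-type or $\bar q$-type, and $\mu_i=\nu_i\,\mathrm{sgn}(n)$ for the index $n$ underlying slot $i$. I distinguish three cases. (i) If $\mu_1=\mu_2$, momentum alone yields $n_1+n_2\le\sum_{i\ge3}n_i$. (ii) If $\mu_1\ne\mu_2$ and $\nu_1=\nu_2$, the two top slots enter the energy sum with the same sign, so $n_1^2+n_2^2\le1+2\widetilde m+\sum_{i\ge3}n_i^2$. (iii) If $\mu_1\ne\mu_2$ and $\nu_1\ne\nu_2$, the two underlying signed indices have the same sign; if $n_1=n_2$ they would coincide, and since slots $1$ and $2$ would then be of opposite type they would form a conjugate pair, which has been excluded; hence $n_1-n_2\ge1$ and so $n_1+n_2\le(n_1-n_2)(n_1+n_2)=|n_1^2-n_2^2|\le1+2\widetilde m+\sum_{i\ge3}n_i^2$. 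In every case, using $\widetilde m\le2+\sum_{i\ge3}\lfloor n_i\rfloor$, the elementary bound $\sum_{i\ge3}n_i^2\le\big(\sum_{i\ge3}\lfloor n_i\rfloor\big)^2$, and that $\lfloor n_i\rfloor\ge c(\sigma)$ with $c(\sigma)$ large (see Lemma \ref{122203}), I obtain $n_1\le2\big(\sum_{i\ge3}\lfloor n_i\rfloor\big)^2\le\big(\sum_{i\ge3}\lfloor n_i\rfloor\big)^3$, and therefore $\lfloor n_1\rfloor,\lfloor n_2\rfloor\le\big(\sum_{i\ge3}\lfloor n_i\rfloor\big)^3$.

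Finally I would conclude. Taking $\ln^{\sigma}$ gives $\ln^{\sigma}\lfloor n_1\rfloor,\ \ln^{\sigma}\lfloor n_2\rfloor\le3^{\sigma}\ln^{\sigma}\big(\sum_{i\ge3}\lfloor n_i\rfloor\big)$, and iterating (\ref{122201}) as in (\ref{002}) gives $\ln^{\sigma}\big(\sum_{i\ge3}\lfloor n_i\rfloor\big)\le\sum_{i\ge3}\ln^{\sigma}\lfloor n_i\rfloor$. Hence the left side of (\ref{041809'}) is at most $\sum_{i\ge1}\ln^{\sigma}\lfloor n_i\rfloor\le(2\cdot3^{\sigma}+1)\sum_{i\ge3}\ln^{\sigma}\lfloor n_i\rfloor\le3\cdot4^{\sigma}\sum_{i\ge3}\ln^{\sigma}\lfloor n_i\rfloor$, first for the reduced list and hence also for the original one, which is (\ref{041809'}). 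The main obstacle --- and the place where \emph{both} hypotheses (\ref{041803}) and (\ref{041803'}) are genuinely needed --- is case (iii) with $n_1=n_2$: there the two largest slots are invisible to both conservation laws, and it is precisely the preliminary reduction, together with the fact that a conjugate pair is invisible to $\sum_n|k_n-k_n'|\ln^{\sigma}\lfloor n\rfloor$, that disposes of it.
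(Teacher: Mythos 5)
Your proof is correct and follows essentially the same route as the paper's: the momentum relation controls $n_1\pm n_2$, the near-resonance relation controls $n_1^2\pm n_2^2$, and iterating the superadditivity estimate of Lemma \ref{122203} converts the resulting polynomial bound $\lfloor n_1\rfloor\le\big(\sum_{i\ge3}\lfloor n_i\rfloor\big)^{O(1)}$ into (\ref{041809'}) with a constant of size $O(4^{\sigma})$. The only real difference is organisational: where the paper disposes of the degenerate case $n_1=n_2$ with opposite signs by observing the cancellation in $\sum_n|k_n-k_n'|$ (its Case 1.1), you strip conjugate pairs off the top of the list beforehand; and your explicit separation of the momentum signs $\mu_i$ from the energy signs $\nu_i$ is in fact a tidier bookkeeping than the paper's, which uses a single sequence $(\mu_i)$ for both identities.
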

\begin{proof}
From the definition of $(n_i)_{1\leq i\leq m}$ and (\ref{041803'}), there exist $(\mu_i)_{1\leq i\leq m}$ with $\mu_i\in\{-1,1\}$ such that
\begin{equation}
\label{1605141}\sum_{n\in\mathbb{Z}}(k_n-k_n')n^2=\sum_{i\geq 1}\mu_in_i^2
\end{equation}
\and
\begin{equation}
\label{1605141'}\sum_{n\in\mathbb{Z}}(k_n-k_n')n=\sum_{i\geq 1}\mu_in_i,
\end{equation}
where $$\sum_{i\geq 1}=\sum_{1\leq i\leq m}.$$
In view of (\ref{041803}), (\ref{1605141}) and $\left|\widetilde V_n\right|\leq 2$,
one has
\begin{equation*}
\left|\sum_{i\geq 1}\mu_in_i^2\right|\leq\left|\sum_{n\in\mathbb{Z}}(k_n-k_n')\widetilde V_n\right|+1\leq2m+1,
\end{equation*}which implies
\begin{equation}\label{041804}
\left|n_1^2+\left(\frac{\mu_2}{\mu_1}\right)n_2^2\right|\leq\left(\sum_{i\geq 3}n_i^2\right)+2m+1\leq 2\sum_{i\geq 3}\lfloor n_i\rfloor^2.
\end{equation}
On the other hand, by (\ref{041803'}) and (\ref{1605141'}), we obtain
\begin{equation}\label{041805}
\left|n_1+\left(\frac{\mu_2}{\mu_1}\right)n_2\right|\leq \sum_{i\geq 3}|n_i|\leq \sum_{i\geq 3}\lfloor n_i\rfloor.
\end{equation}

To prove the inequality (\ref{041809'}), we will distinguish two cases:

\textbf{Case. 1.} $\frac{\mu_2}{\mu_1}=-1$.

\textbf{Case. 1.1.} $n_1=n_2$.

Then it is easy to see that
\begin{equation*}
\sum_{n\in\mathbb{Z}}|k_n-k_n'|\ln^{\sigma}\lfloor n\rfloor\leq \sum_{i\geq 3}\ln^{\sigma}\lfloor n_i\rfloor.
\end{equation*}

\textbf{Case. 1.2.} $n_1\neq n_2$.

Then one has
\begin{eqnarray*}\label{041806}
\nonumber|n_1-n_2|+|n_1+n_2|
&\leq&\nonumber|n_1-n_2|+\left|n_1^2-n_2^2\right|\\
& \leq&\nonumber\sum_{i\geq 3}\lfloor n_i\rfloor+2\sum_{i\geq 3}\lfloor n_i\rfloor^2\quad \mbox{(by (\ref{041804}) and (\ref{041805}))}\\
 &\leq&3\sum_{i\geq 3}\lfloor n_i\rfloor^2.
\end{eqnarray*}
Hence one has
\begin{equation*}
|n_1|\leq |n_1-n_2|+|n_1+n_2|\leq 3\sum_{i\geq 3}\lfloor n_i\rfloor^2.
\end{equation*}
and then
\begin{equation*}
\lfloor n_1\rfloor \leq 3\sum_{i\geq 3}\lfloor n_i\rfloor^2.
\end{equation*}
Furthermore the following inequalities hold
\begin{equation*}
\ln\lfloor n_1\rfloor \leq \ln 3+\ln\left(\sum_{i\geq 3}\lfloor n_i\rfloor^2\right)
\end{equation*}
and
\begin{equation}\label{030501}
\ln^{\sigma}\lfloor n_1\rfloor \leq 2^{\sigma}\ln^{\sigma}\left(\sum_{i\geq 3}\lfloor n_i\rfloor^2\right) .
\end{equation}
Using
  (\ref{122201}) in Lemma \ref{122203} again and again, one has
\begin{equation}\label{022202}
\ln^{\sigma}\left(\sum_{i\geq 3}\lfloor n_i\rfloor^2\right) \leq2^{\sigma}\sum_{i\geq 3}\ln^{\sigma}\lfloor n_i\rfloor.
\end{equation}
Finally we have
\begin{eqnarray}
\nonumber\sum_{n\in\mathbb{Z}}|k_n-k_n'|\ln^{\sigma}\lfloor n\rfloor&\leq&\nonumber\sum_{n\in\mathbb{Z}}(k_n+k_n')\ln^{\sigma}\lfloor n\rfloor\\
&=&\nonumber\sum_{i\geq 1}\ln^{\sigma}\lfloor n_i\rfloor\\
&\leq&\label{2016121101}3 \cdot 4^{\sigma} \sum_{i\geq 3}\ln^{\sigma}\lfloor n_i\rfloor,
\end{eqnarray}
where the last inequality is based on (\ref{030501}) and (\ref{022202}).

\textbf{Case. 2.} $\frac{\mu_2}{\mu_1}=1$.

In view of (\ref{041804}), one has
\begin{equation*}
n_1^2+n_2^2\leq 2\sum_{i\geq3}\lfloor n_i\rfloor^{2},
\end{equation*}which implies
\begin{equation*}
\lfloor n_1\rfloor\leq 2\sum_{i\geq3}\lfloor n_i\rfloor^2.
\end{equation*}
Following the proof of (\ref{2016121101}), one has
\begin{equation*}
\sum_{n\in\mathbb{Z}}|k_n-k_n'|\ln^{\sigma}\lfloor n\rfloor\leq
3 \cdot 4^{\sigma} \sum_{i\geq3}\ln^{\sigma}\lfloor n_i\rfloor.
\end{equation*}

\end{proof}
\begin{lem}\label{a3}
Assuming $\sigma>1$ and $\delta\in(0,1)$, then we have the following inequality
\begin{equation}\label{041809}
\sum_{a\in\mathbb{N}^{\mathbb{Z}}}e^{-\delta\sum_{n\in\mathbb{Z}}a_{n}\ln^{\sigma}\lfloor n\rfloor}\leq\prod_{n\in\mathbb{Z}}\frac{1}{1-e^{-\delta \ln^{\sigma}\lfloor n\rfloor}}.
\end{equation}
\end{lem}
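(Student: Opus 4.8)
The plan is to recognize the left-hand side of (\ref{041809}) as an iterated geometric series and to interchange the summation over $a$ with the product over $n$, using only positivity of the summands (Tonelli / monotone convergence); no delicate analytic estimate is needed for the interchange itself.

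First I would fix a finite set $S\subset\mathbb{Z}$ and restrict the sum to those $a\in\mathbb{N}^{\mathbb{Z}}$ with $\mathrm{supp}\,a\subseteq S$. For such $a$ the exponent splits as $\sum_{n\in S}a_n\ln^{\sigma}\lfloor n\rfloor$, and since $\lfloor n\rfloor\geq c(\sigma)>1$ forces $0<e^{-\delta\ln^{\sigma}\lfloor n\rfloor}<1$ for every $n$, each inner sum is a convergent geometric series, so that
\begin{equation*}
\sum_{a:\,\mathrm{supp}\,a\subseteq S}e^{-\delta\sum_{n\in\mathbb{Z}}a_n\ln^{\sigma}\lfloor n\rfloor}
=\prod_{n\in S}\left(\sum_{a_n=0}^{\infty}e^{-\delta a_n\ln^{\sigma}\lfloor n\rfloor}\right)
=\prod_{n\in S}\frac{1}{1-e^{-\delta\ln^{\sigma}\lfloor n\rfloor}}.
\end{equation*}
Letting $S$ exhaust $\mathbb{Z}$ and using that all summands are nonnegative, the left-hand side of (\ref{041809}) equals $\sup_{S}\prod_{n\in S}(1-e^{-\delta\ln^{\sigma}\lfloor n\rfloor})^{-1}=\prod_{n\in\mathbb{Z}}(1-e^{-\delta\ln^{\sigma}\lfloor n\rfloor})^{-1}$; in particular the asserted inequality (in fact an equality) holds.

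The only point requiring care is that the right-hand side is actually finite, i.e. that the infinite product converges, and this is where $\sigma>1$ enters. Since $(\ln\lfloor n\rfloor)^{\sigma-1}\to\infty$ as $|n|\to\infty$, for $|n|$ large one has $\delta(\ln\lfloor n\rfloor)^{\sigma-1}\geq2$, hence $e^{-\delta\ln^{\sigma}\lfloor n\rfloor}\leq\lfloor n\rfloor^{-2}$, so $\sum_{n\in\mathbb{Z}}e^{-\delta\ln^{\sigma}\lfloor n\rfloor}<\infty$ and the product $\prod_{n\in\mathbb{Z}}(1-e^{-\delta\ln^{\sigma}\lfloor n\rfloor})^{-1}$ converges. (Moreover this same summability shows that if one admits $a\in\mathbb{N}^{\mathbb{Z}}$ with infinite support the corresponding exponent diverges to $-\infty$, so those terms contribute nothing and the statement is unaffected.) Thus no genuine obstacle arises beyond the elementary bookkeeping of the sum–product interchange.
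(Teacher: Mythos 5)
Your proof is correct and follows essentially the same route as the paper's one-line argument: factor the sum over $a$ into a product over $n$ of geometric series $\sum_{a_n\geq 0}e^{-\delta a_n\ln^{\sigma}\lfloor n\rfloor}=(1-e^{-\delta\ln^{\sigma}\lfloor n\rfloor})^{-1}$. Your extra care about the interchange (exhausting $\mathbb{Z}$ by finite supports, checking convergence of the product via $e^{-\delta\ln^{\sigma}\lfloor n\rfloor}\leq\lfloor n\rfloor^{-2}$ for large $|n|$, and disposing of infinitely supported $a$) only makes the same argument more rigorous.
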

\begin{proof}
By a direct calculation, one has
$$\sum\limits_{a\in\mathbb{N}^{\mathbb{Z}}}e^{-\delta\sum_{n\in\mathbb{Z}}a_{n}\ln^{\sigma}\lfloor n\rfloor}\leq \prod\limits_{n\in\mathbb{Z}}\left(\sum\limits_{a_n\in\mathbb{N}}e^{-\delta a_{n}\ln^{\sigma}\lfloor n\rfloor}\right)
=\prod\limits_{n\in\mathbb{Z}}\frac{1}{1-e^{-\delta \ln^{\sigma}\lfloor n\rfloor}}.$$
\end{proof}
\begin{lem}\label{8.6}For $\sigma>1$ and $\delta\in(0,1)$, let $$f_{\sigma,\delta}(x)=e^{-\delta x^{\sigma}+x},$$ then we have
\begin{equation}\label{042805}
\max_{x\geq1}f_{\sigma,\delta}(x)\leq \exp\left\{\left(\frac{1}{\delta\sigma}\right)^{\frac1{\sigma-1}}\right\}.
\end{equation}
\end{lem}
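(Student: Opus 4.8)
The plan is to treat this as an elementary one–variable optimization of the exponent. Write $f_{\sigma,\delta}(x)=e^{g(x)}$ with $g(x)=-\delta x^{\sigma}+x$, so that it suffices to bound $\max_{x\ge 1}g(x)$ from above by $(1/(\delta\sigma))^{1/(\sigma-1)}$ and then exponentiate.

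First I would differentiate: $g'(x)=1-\delta\sigma x^{\sigma-1}$. Since $\sigma>1$, the map $x\mapsto x^{\sigma-1}$ is strictly increasing on $(0,\infty)$, hence $g'$ is strictly decreasing there, is positive near $0$, and vanishes at the unique point $x_*=(\delta\sigma)^{-1/(\sigma-1)}=(1/(\delta\sigma))^{1/(\sigma-1)}$. Consequently $g$ is increasing on $(0,x_*]$ and decreasing on $[x_*,\infty)$, so $x_*$ is the global maximizer of $g$ on $(0,\infty)$; in particular $g(x)\le g(x_*)$ for every $x\ge 1$, whether or not $x_*\ge 1$. This is the only (minor) technical point: I avoid a case split on the position of $x_*$ relative to $1$ by simply comparing with the unconstrained maximum.

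Next I would evaluate $g$ at $x_*$. Using the relation $\delta x_*^{\sigma-1}=1/\sigma$ one gets
\begin{equation*}
g(x_*)=x_*-\delta x_*^{\sigma}=x_*\left(1-\delta x_*^{\sigma-1}\right)=x_*\left(1-\frac1\sigma\right)<x_*=\left(\frac1{\delta\sigma}\right)^{\frac1{\sigma-1}},
\end{equation*}
where the strict inequality uses $\sigma>1$. Exponentiating, $\max_{x\ge1}f_{\sigma,\delta}(x)=e^{\max_{x\ge1}g(x)}\le e^{g(x_*)}\le\exp\{(1/(\delta\sigma))^{1/(\sigma-1)}\}$, which is (\ref{042805}). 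I do not expect any real obstacle; the only thing to watch is keeping the constraint $x\ge1$ from forcing an awkward boundary analysis, which the comparison with the global maximum over $(0,\infty)$ handles cleanly. Note the hypothesis $\delta\in(0,1)$ is not actually needed for this inequality (it only ensures $g(1)=1-\delta>0$, i.e.\ $f_{\sigma,\delta}(1)>1$), but I would retain it to match the statement as used elsewhere in the paper.
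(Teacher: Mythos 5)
Your proposal is correct and follows essentially the same route as the paper: differentiate the exponent, locate the unique critical point $x_*=(1/(\delta\sigma))^{1/(\sigma-1)}$, and bound the maximum by $e^{x_*}$ after discarding the negative term in the exponent. The only cosmetic difference is that you write $g(x_*)=x_*(1-1/\sigma)$ explicitly, while the paper simply drops $-\delta x_*^{\sigma}\le 0$; both yield the same bound.
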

\begin{proof}
Firstly we have
\begin{equation*}
f_{\sigma,\delta}'(x)=e^{-\delta x^{\sigma}+x}\left(-\delta\sigma x^{\sigma-1}+1\right).
\end{equation*}
Hence
\begin{equation*}
f_{\sigma,\delta}'(x)=0\Leftrightarrow x=\left(\frac{1}{\delta\sigma}\right)^{\frac1{\sigma-1}}.
\end{equation*}
Then one has
\begin{equation*}
\max_{x\geq1}f_{\sigma,\delta}(x)\leq f_{\sigma,\delta}\left(\left(\frac{1}{\delta\sigma}\right)^{\frac1{\sigma-1}}\right)\leq \exp\left\{\left(\frac{1}{\delta\sigma}\right)^{\frac1{\sigma-1}}\right\}.
\end{equation*}
\end{proof}
\begin{lem}\label{8.6}For $p\geq 1$ and $\delta\in (0,1)$, let $$g_{p,\delta}(x)=x^{p}e^{-\delta x},$$ then we have
\begin{equation}\label{042805*}
\max_{x\geq1}g_{p,\delta}(x)\leq \left(\frac{p}{e\delta}\right)^p.
\end{equation}
\end{lem}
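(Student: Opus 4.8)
The plan is to locate the unique interior maximum of $g_{p,\delta}$ by elementary calculus and to check that it lies in the admissible range $x\ge 1$, exactly as was done for $f_{\sigma,\delta}$ in the previous lemma. First I would differentiate:
\begin{equation*}
g_{p,\delta}'(x)=px^{p-1}e^{-\delta x}-\delta x^{p}e^{-\delta x}=x^{p-1}e^{-\delta x}\left(p-\delta x\right),
\end{equation*}
so that for $x>0$ one has $g_{p,\delta}'(x)=0$ if and only if $x=p/\delta$, with $g_{p,\delta}'(x)>0$ on $(0,p/\delta)$ and $g_{p,\delta}'(x)<0$ on $(p/\delta,\infty)$. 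Hence $g_{p,\delta}$ is strictly increasing on $(0,p/\delta]$ and strictly decreasing on $[p/\delta,\infty)$, so its maximum over $[1,\infty)$ is attained at $x=p/\delta$ provided $p/\delta\ge 1$; and this is where the hypotheses enter, since $p\ge 1$ and $\delta\in(0,1)$ give $p/\delta>p\ge 1$.

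It then remains only to evaluate the maximum:
\begin{equation*}
\max_{x\ge 1}g_{p,\delta}(x)=g_{p,\delta}\!\left(\frac{p}{\delta}\right)=\left(\frac{p}{\delta}\right)^{p}e^{-\delta\cdot p/\delta}=\left(\frac{p}{\delta}\right)^{p}e^{-p}=\left(\frac{p}{e\delta}\right)^{p},
\end{equation*}
which is precisely the claimed bound (in fact an equality). There is no real obstacle here; the only step that uses the assumptions $p\ge 1$ and $\delta\in(0,1)$ is the verification that the critical point $p/\delta$ falls inside the interval $[1,\infty)$, after which the monotonicity of $g_{p,\delta}$ on either side of $p/\delta$ finishes the proof.
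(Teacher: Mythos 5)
Your proposal is correct and follows essentially the same route as the paper: differentiate, locate the critical point $x=p/\delta$, and evaluate $g_{p,\delta}(p/\delta)=(p/(e\delta))^p$. The only difference is that you additionally verify $p/\delta\ge 1$ and record the monotonicity on either side, which is a slightly more careful justification of the same computation.
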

\begin{proof}
Firstly we have
\begin{equation*}
g_{p,\delta}'(x)=px^{p-1}e^{-\delta x}-\delta x^{p}e^{-\delta x}.
\end{equation*}
Hence
\begin{equation*}
g_{p,\delta}'(x)=0\Leftrightarrow x=\frac p{\delta}.
\end{equation*}
Then one has
\begin{equation*}\label{042608*}
\max_{x\geq1}g_{p,\delta}(x)\leq g_{p,\delta}\left(\frac{p}\delta\right)= \left(\frac{p}{e\delta}\right)^p.
\end{equation*}
\end{proof}
\begin{lem}\label{lem2}
Assuming $\sigma>1$ and $\delta\in(0,1)$, then we have
\begin{equation}\label{0418011}
\sum_{n\geq 3}e^{-\delta \ln^{\sigma} n}\leq \frac{3}\delta\cdot \exp\left\{\left(\frac2{\delta\sigma}\right)^{\frac1{\sigma-1}}\right\}.
\end{equation}
\end{lem}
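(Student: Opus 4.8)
The plan is to compare the series with an integral and then apply the elementary bound
\begin{equation*}
\max_{x\ge1}e^{-\delta' x^{\sigma}+x}\le\exp\left\{\left(\tfrac{1}{\delta'\sigma}\right)^{\frac{1}{\sigma-1}}\right\},\qquad \delta'\in(0,1),
\end{equation*}
which is exactly \eqref{042805}. First, since $\frac{d}{dx}e^{-\delta\ln^{\sigma}x}=-\delta\sigma\, x^{-1}\ln^{\sigma-1}x\cdot e^{-\delta\ln^{\sigma}x}<0$ for $x>1$, the map $x\mapsto e^{-\delta\ln^{\sigma}x}$ is decreasing on $[2,\infty)$, hence $e^{-\delta\ln^{\sigma}n}\le\int_{n-1}^{n}e^{-\delta\ln^{\sigma}x}\,dx$ for every $n\ge3$; summing over $n\ge3$ and substituting $x=e^{t}$ gives
\begin{equation*}
\sum_{n\ge3}e^{-\delta\ln^{\sigma}n}\le\int_{2}^{\infty}e^{-\delta\ln^{\sigma}x}\,dx=\int_{\ln2}^{\infty}e^{t-\delta t^{\sigma}}\,dt.
\end{equation*}

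Next I would split this integral at $t=1$ (legitimate because $\ln2<1$). On $[\ln2,1]$ the factor $e^{-\delta t^{\sigma}}$ is at most $1$, so that piece is bounded by $\int_{\ln2}^{1}e^{t}\,dt=e-2<1$. On $[1,\infty)$ I factor $e^{t-\delta t^{\sigma}}=e^{-\frac{\delta}{2}t^{\sigma}}\cdot e^{-\frac{\delta}{2}t^{\sigma}+t}$; since $\delta/2\in(0,1)$, the bound \eqref{042805} applied with $\delta'=\delta/2$ yields $e^{-\frac{\delta}{2}t^{\sigma}+t}\le E:=\exp\{(\tfrac{2}{\delta\sigma})^{1/(\sigma-1)}\}$ for all $t\ge1$, while $t^{\sigma}\ge t$ for $t\ge1$ gives $e^{-\frac{\delta}{2}t^{\sigma}}\le e^{-\frac{\delta}{2}t}$. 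Therefore
\begin{equation*}
\int_{1}^{\infty}e^{t-\delta t^{\sigma}}\,dt\le E\int_{1}^{\infty}e^{-\frac{\delta}{2}t}\,dt=\frac{2}{\delta}e^{-\delta/2}E\le\frac{2}{\delta}E.
\end{equation*}

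Adding the two contributions, $\sum_{n\ge3}e^{-\delta\ln^{\sigma}n}\le(e-2)+\frac{2}{\delta}E$. Because the exponent $(\tfrac{2}{\delta\sigma})^{1/(\sigma-1)}$ is positive we have $E\ge1>e-2$, whence the right-hand side is at most $E+\frac{2}{\delta}E=(1+\frac{2}{\delta})E$, and finally $1+\frac{2}{\delta}=\frac{2+\delta}{\delta}<\frac{3}{\delta}$ since $\delta<1$ --- this is exactly \eqref{0418011}. I do not foresee a genuine obstacle; the one place needing attention is to choose the split coefficient in $e^{t-\delta t^{\sigma}}=e^{-\frac{\delta}{2}t^{\sigma}}\cdot e^{-\frac{\delta}{2}t^{\sigma}+t}$ to be precisely $\delta/2$, so that \eqref{042805} reproduces exactly the exponent $(\tfrac{2}{\delta\sigma})^{1/(\sigma-1)}$ of the statement, and to keep the leftover integral $\int_{1}^{\infty}e^{-\frac{\delta}{2}t^{\sigma}}\,dt\le 2/\delta$ small enough that the additive constant $e-2<1$ is absorbed into the factor $3/\delta$ (rather than $2/\delta$).
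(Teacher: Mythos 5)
Your argument is correct and follows essentially the same route as the paper's proof: bound the sum by an integral, substitute $x=e^{t}$, factor $e^{t-\delta t^{\sigma}}=e^{-\frac{\delta}{2}t^{\sigma}}\cdot e^{-\frac{\delta}{2}t^{\sigma}+t}$, and apply \eqref{042805} with parameter $\delta/2$ before integrating the remaining exponential tail. The only differences are cosmetic (you start the integral at $2$ and split at $t=1$, whereas the paper starts at $1$ and bounds $\int_{0}^{\infty}e^{-\frac12\delta y^{\sigma}}\,dy\le 1+\int_{1}^{\infty}e^{-\frac12\delta y}\,dy$), and both yield the constant $3/\delta$.
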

\begin{proof}
Obviously, we have
\begin{eqnarray*}
\label{e}&&\sum_{n\geq 3}e^{-\delta \ln^{\sigma} n}\\
 &\leq& \int_{1}^{+\infty}e^{-\delta \ln^{\sigma} x}\mathrm{d}x\\
&=&\int_{0}^{+\infty}e^{-\delta y^{\sigma}+y}\mathrm{d}y\\
&\leq&\exp\left\{\left(\frac2{\delta\sigma}\right)^{\frac1{\sigma-1}}\right\}\cdot\int_{0}^{+\infty}e^{-\frac12\delta y^{\sigma}}\mathrm{d}y\qquad \mbox{(by (\ref{042805}))}\\
&\leq&\exp\left\{\left(\frac2{\delta\sigma}\right)^{\frac1{\sigma-1}}\right\}\cdot\left(1+\int_{1}^{+\infty}e^{-\frac12\delta y}\mathrm{d}y\right)\\
&\leq &\frac{3}\delta\cdot \exp\left\{\left(\frac2{\delta\sigma}\right)^{\frac1{\sigma-1}}\right\}.
\end{eqnarray*}

\end{proof}
\begin{lem}\label{a5}
Assuming $\sigma>1$ and $\delta\in(0,3-2\sqrt{2})$, then we have
\begin{equation}\label{122401}
\prod_{n\in\mathbb{Z}}\frac{1}{1-e^{-\delta \ln^{\sigma}\lfloor n\rfloor}}\leq\exp\left\{\frac{18}\delta\cdot \exp\left\{\left(\frac4{\delta}\right)^{\frac1{\sigma-1}}\right\}\right\}.
\end{equation}
\end{lem}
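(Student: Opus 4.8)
The plan is to pass to logarithms. Since $\prod_{n\in\mathbb{Z}}\bigl(1-e^{-\delta\ln^{\sigma}\lfloor n\rfloor}\bigr)^{-1}=\exp\{S\}$ with
\[
S:=-\sum_{n\in\mathbb{Z}}\ln\left(1-e^{-\delta\ln^{\sigma}\lfloor n\rfloor}\right),
\]
it suffices to prove $S\le\frac{18}{\delta}\exp\{(4/\delta)^{1/(\sigma-1)}\}$. First I would record two elementary bounds valid for $t>0$: from $1+t\le e^{t}$ one gets $1-e^{-t}\ge t/(1+t)$, hence $-\ln(1-e^{-t})\le\ln(1+1/t)\le 1/t$; and from $-\ln(1-x)\le x/(1-x)$ together with $1-e^{-t}\ge 1-e^{-1}>\tfrac12$ for $t\ge1$ one gets $-\ln(1-e^{-t})\le 2e^{-t}$ for $t\ge1$. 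I would also observe that $c(\sigma)\ge c_1(\sigma)=\exp\{\sigma 2^{\sigma}\ln 2\}>e^{2\ln 2}=4$ by Lemma \ref{122203}, so that $\ln^{\sigma}\lfloor n\rfloor\ge\ln^{\sigma}c(\sigma)\ge1$ for every $n$; in particular $\ln c(\sigma)\ge1$ is an admissible argument in (\ref{042805}), and any tail sum over $\{|n|>c(\sigma)\}$ ranges over integers $\ge 5$, which is enough to invoke (\ref{0418011}). Only $\delta<1$ will actually be used (implied by $\delta<3-2\sqrt2$).

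Next I would introduce the threshold $E:=\exp\{(1/\delta)^{1/\sigma}\}$, at which $\delta\ln^{\sigma}E=1$, and split $\mathbb{Z}=A\sqcup B$ with $A=\{n:\lfloor n\rfloor\le E\}$, $B=\{n:\lfloor n\rfloor>E\}$. On $A$ one has $\delta\ln^{\sigma}\lfloor n\rfloor\le1$, so each summand of $S$ is $\le(\delta\ln^{\sigma}\lfloor n\rfloor)^{-1}\le\delta^{-1}$; moreover $A\ne\emptyset$ forces $c(\sigma)\le E$, and then $A=\{|n|\le E\}$ has at most $3E$ elements, whence $\sum_{n\in A}\le 3\delta^{-1}E=3\delta^{-1}\exp\{(1/\delta)^{1/\sigma}\}\le3\delta^{-1}\exp\{(4/\delta)^{1/(\sigma-1)}\}$, using $\delta<1$ and $1/\sigma<1/(\sigma-1)$. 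On $B$ one has $\delta\ln^{\sigma}\lfloor n\rfloor>1$, so $\sum_{n\in B}\le2\sum_{n\in B}e^{-\delta\ln^{\sigma}\lfloor n\rfloor}$; here I would separate the ``plateau'' $\{|n|\le c(\sigma)\}$ (where $\lfloor n\rfloor=c(\sigma)$, and which lies in $B$ only when $c(\sigma)>E$) from the genuine tail $\{|n|>\max(c(\sigma),E)\}$ (where $\lfloor n\rfloor=|n|\ge5$). The plateau part of $\sum_{n\in B}e^{-\delta\ln^{\sigma}\lfloor n\rfloor}$ is at most $3c(\sigma)e^{-\delta\ln^{\sigma}c(\sigma)}=3e^{\ln c(\sigma)-\delta(\ln c(\sigma))^{\sigma}}\le3\exp\{(1/\delta\sigma)^{1/(\sigma-1)}\}$ by (\ref{042805}) at $x=\ln c(\sigma)$, while the tail part is at most $2\sum_{m\ge3}e^{-\delta\ln^{\sigma}m}\le 6\delta^{-1}\exp\{(2/\delta\sigma)^{1/(\sigma-1)}\}$ by (\ref{0418011}); after the crude bounds $1/\delta\sigma\le4/\delta$, $2/\delta\sigma\le4/\delta$ these are $\le3\exp\{(4/\delta)^{1/(\sigma-1)}\}$ and $\le6\delta^{-1}\exp\{(4/\delta)^{1/(\sigma-1)}\}$. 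Distinguishing the cases $c(\sigma)>E$ (then $A=\emptyset$, so $S=S_B\le 2(3+6\delta^{-1})\exp\{(4/\delta)^{1/(\sigma-1)}\}\le18\delta^{-1}\exp\{(4/\delta)^{1/(\sigma-1)}\}$, using $3\le3\delta^{-1}$) and $c(\sigma)\le E$ (then $B\subseteq\{|n|>E\}$ carries no plateau and $S\le(3\delta^{-1}+12\delta^{-1})\exp\{(4/\delta)^{1/(\sigma-1)}\}$) gives $S\le\frac{18}{\delta}\exp\{(4/\delta)^{1/(\sigma-1)}\}$ in both, and exponentiating yields (\ref{122401}).

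The one point that really needs attention is the constant: one cannot afford the naive bound $-\ln(1-e^{-t})\le 1/t$ on all of $\mathbb{Z}$, because on the plateau it would leave a contribution of order $c(\sigma)/(\delta\ln^{\sigma}c(\sigma))$, which is \emph{not} controlled uniformly in $\sigma$ (recall $c(\sigma)$ grows super-exponentially in $\sigma$). Switching to the exponential bound $-\ln(1-e^{-t})\le 2e^{-t}$ precisely on the range $\delta\ln^{\sigma}\lfloor n\rfloor\ge1$ lets the factor $c(\sigma)=e^{\ln c(\sigma)}$ be absorbed by the maximum (\ref{042805}) of $e^{x-\delta x^{\sigma}}$. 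The remaining work is the numerical check that the accumulated constants stay below $18$, which I expect to be the only slightly tedious step.
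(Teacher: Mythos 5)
Your proof is correct, and it is essentially the paper's argument: both split $\mathbb{Z}$ at a threshold of the form $\exp\{(\mathrm{const}/\delta)^{1/\sigma}\}$, bound the finitely many head factors by their cardinality times a crude $\delta^{-O(1)}$ per-factor bound, and control the tail by an elementary estimate of $-\ln(1-x)$ followed by Lemma \ref{lem2}. The only divergence is your separate ``plateau'' case $c(\sigma)>E$ requiring (\ref{042805}); the paper sidesteps this entirely by using $\lfloor n\rfloor\geq |n|$ to reduce the tail to a sum over distinct integers, which makes your extra case analysis unnecessary but not wrong.
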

\begin{proof}
If $x\in(0,3-2\sqrt{2})$, one has
\begin{equation}\label{042506}\frac{1}{1-e^{-x}}\leq \frac{1}{x^{2}}
\end{equation}
and
\begin{equation}\label{022208}
\ln\left(\frac{1}{1-x}\right)\leq\sqrt{x}.
\end{equation}
Let \begin{equation*}\label{ntheta}
N=\exp\left\{\left(\frac{\ln\left(3+2\sqrt{2}\right)}{\delta}\right)^{\frac1\sigma}\right\}.
\end{equation*}If $|n|>N$, then one has
\begin{equation*}
-\delta\ln^{\sigma}| n|\leq \ln\left(3-2\sqrt{2}\right),
\end{equation*}
which implies
\begin{equation}\label{022207}
e^{-\delta\ln^{\sigma}\lfloor n\rfloor}\leq e^{-\delta\ln^{\sigma}| n|}\leq 3-2\sqrt{2}.
\end{equation}

Hence we have
\begin{eqnarray*}
\nonumber&&\prod\limits_{n\in\mathbb{Z}}\frac{1}{1-e^{-\delta \ln^{\sigma}\lfloor n\rfloor}}
\nonumber\\
&=&\nonumber\left(\prod\limits_{|n|\leq N}\frac{1}{1-e^{-\delta \ln^{\sigma}\lfloor n\rfloor }}\right)\left(\prod\limits_{|n|> N}\frac{1}{1-e^{-\delta \ln^{\sigma} \lfloor n\rfloor }}\right)\\
&\leq&\nonumber\left(\prod\limits_{|n|\leq N}\frac{1}{1-e^{-\delta }}\right)\left(\prod\limits_{|n|> N}\frac{1}{1-e^{-\delta \ln^{\sigma} \lfloor n\rfloor }}\right).\end{eqnarray*}
On one hand, using (\ref{042506}) we have
\begin{eqnarray}\label{022210}
\prod\limits_{|n|\leq N}\frac{1}{1-e^{-\delta }}\leq \left(\frac{1}{\delta}\right)^{6N}=\exp\left\{\ln\left(\frac1{\delta}\right)\cdot 6\exp\left\{\left(\frac{\ln\left(3+2\sqrt{2}\right)}{\delta}\right)^{\frac1\sigma}\right\}\right\}.
\end{eqnarray}
On the other hand, by (\ref{022208}) and (\ref{022207}) one has
\begin{eqnarray}
\nonumber\prod\limits_{|n|> N}\frac{1}{1-e^{-\delta \ln^{\sigma} \lfloor n\rfloor }}
&=&\nonumber \exp\left\{\sum_{|n|>N}\ln\left(\frac{1}{1-e^{-\delta \ln^{\sigma} \lfloor n\rfloor }}\right)\right\}\\
&\leq&\exp\left\{\sum_{|n|>N}e^{-\frac12\delta \ln^{\sigma} \lfloor n\rfloor }\right\}\nonumber\\
&\leq&\exp\left\{\frac{12}\delta\cdot \exp\left\{\left(\frac4{\delta}\right)^{\frac1{\sigma-1}}\right\}\right\}\label{022211},
\end{eqnarray}
where the last inequality is based on (\ref{0418011}) in Lemma \ref{lem2}and  using $\sigma>1$.

Combing (\ref{022210}) and (\ref{022211}), we finish the proof of (\ref{122401}).

\end{proof}
\begin{lem}Assuming $\sigma>1$, $\delta\in(0,1)$, $p=1,2$ and $a=(a_n)_{n\in\mathbb{Z}}\in\mathbb{N}^{\mathbb{Z}}$, then we have
\begin{equation}\label{042807}
\prod_{n\in\mathbb{Z}}\left(1+a_n^p\right)e^{-2\delta a_n\ln^{\sigma} \lfloor n\rfloor}\leq \exp\left\{3p\left(\frac{p}{\delta}\right)^{\frac 1{\sigma-1}}\cdot\exp\left\{\left(\frac1\delta\right)^{\frac1\sigma}\right\}
\right\}.
\end{equation}
\end{lem}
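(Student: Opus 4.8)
The plan is to pass to logarithms and show that all but boundedly many factors contribute nothing. Writing $x_n=\ln^\sigma\lfloor n\rfloor$ (so that $x_n\ge\ln^\sigma c(\sigma)>1$),
\[
\ln\prod_{n\in\mathbb Z}\bigl(1+a_n^p\bigr)e^{-2\delta a_n x_n}=\sum_{n\in\mathbb Z}\Bigl(\ln(1+a_n^p)-2\delta a_n x_n\Bigr).
\]
Since $p\in\{1,2\}$ one has $1+a^p\le(1+a)^p$, hence $\ln(1+a_n^p)\le p\ln(1+a_n)\le p\,a_n$, so the $n$‑th summand is at most $a_n\bigl(p-2\delta x_n\bigr)$, which is $\le 0$ as soon as $x_n\ge p/(2\delta)$, i.e.\ as soon as $\lfloor n\rfloor\ge\exp\{(p/(2\delta))^{1/\sigma}\}$. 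Thus only the indices $n$ with $\lfloor n\rfloor<\exp\{(p/(2\delta))^{1/\sigma}\}$ can contribute a positive amount.

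First I would dispose of the case in which there are no such indices, which (since $\lfloor n\rfloor\ge c(\sigma)$) happens exactly when $2\delta\ln^\sigma c(\sigma)\ge p$; there the whole product is $\le 1$ and the claim is trivial. Otherwise, using $p\le 2$, the number of contributing indices is at most $3\exp\{(p/(2\delta))^{1/\sigma}\}\le 3\exp\{(1/\delta)^{1/\sigma}\}$. For each of them I bound the summand by maximizing over $a_n$: for $a_n\ge 1$, using $1+a_n^p\le 2a_n^p$ together with the elementary bound $\sup_{t>0}t^pe^{-ct}=(p/(ec))^p$ with $c=2\delta x_n$ (this is the computation behind (\ref{042805*})) and $x_n\ge 1$,
\[
\ln(1+a_n^p)-2\delta a_n x_n\le\ln 2+p\ln\frac{p}{2e\delta x_n}\le\ln 2+p\ln\frac{p}{2\delta},
\]
while for $a_n=0$ the summand is $0$; in both cases it is at most $\ln 2+p\ln\frac{p}{2\delta}$, which is positive since $\delta<1$ and $p\ge 1$. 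Summing over the (at most $3\exp\{(1/\delta)^{1/\sigma}\}$) contributing indices gives
\[
\ln\prod_{n\in\mathbb Z}\bigl(1+a_n^p\bigr)e^{-2\delta a_n x_n}\le 3\exp\bigl\{(1/\delta)^{1/\sigma}\bigr\}\cdot\Bigl(\ln 2+p\ln\tfrac{p}{2\delta}\Bigr).
\]

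It then remains to verify $\ln 2+p\ln\frac{p}{2\delta}\le p\,(p/\delta)^{1/(\sigma-1)}$. Writing $v=p/\delta>1$, the left side equals $\ln 2+p\ln v-p\ln 2\le p\ln v$, so it suffices to show $\ln v\le v^{1/(\sigma-1)}$, i.e.\ $(\ln v)^{\sigma-1}\le v$. In the non‑trivial case we have $v=p/\delta>2\ln^\sigma c(\sigma)\ge 2\bigl(\ln c_1(\sigma)\bigr)^\sigma=2(\sigma 2^\sigma\ln 2)^\sigma$, with $c_1(\sigma)$ as in Lemma~\ref{122203}; one then checks that $v\mapsto v-(\ln v)^{\sigma-1}$ is increasing on $[v_0,\infty)$ and nonnegative at $v_0:=2(\sigma 2^\sigma\ln 2)^\sigma$ — both facts following because the factor $2^{\sigma^2}$ built into $v_0$ dominates every polynomial‑in‑$\sigma$ contribution — so $(\ln v)^{\sigma-1}\le v$ for all admissible $\delta$. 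I expect this last step to be the real obstacle: the naive inequality $\ln x\le x^{1/(\sigma-1)}$ genuinely fails for large $\sigma$ on a bounded range of $x$, so one has to exploit the (large) lower bound $\lfloor n\rfloor\ge c(\sigma)$ coming from the very definition of $\mathfrak H_{\sigma,\infty}$ to push $1/\delta$ past that range. Everything else — the index splitting, the counting, and the single‑index maximization via (\ref{042805*}) — is routine and parallels the estimates already carried out for~(\ref{030205}).
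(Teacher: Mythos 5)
Your proposal is correct and follows the same overall skeleton as the paper's argument: only the indices $n$ with $\ln^{\sigma}\lfloor n\rfloor$ below a threshold of order $1/\delta$ can make a factor exceed $1$, there are at most $3\exp\{(1/\delta)^{1/\sigma}\}$ of them, and each is bounded individually. The difference is in the per-factor bound. The paper bounds each surviving factor directly by $\exp\{p(p/(\delta\sigma))^{1/(\sigma-1)}\}$ via the auxiliary maximization $\max_{x\ge1}x^{p}e^{-\delta\ln^{\sigma}x}$ (its inequality (\ref{030102})), so the exponent $(p/\delta)^{1/(\sigma-1)}$ appears at once and the constant $c(\sigma)$ plays no explicit role. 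You instead use the cruder $\sup_{t}t^{p}e^{-ct}=(p/(ec))^{p}$ from (\ref{042805*}), which yields a per-factor contribution $\ln 2+p\ln\frac{p}{2\delta}$ that is only logarithmic-versus-power comparable to the target; as you correctly diagnose, the resulting inequality $\ln v\le v^{1/(\sigma-1)}$ with $v=p/\delta$ is false for general $v$ when $\sigma>2$, and is rescued only by the fact that in the non-trivial case $v>2\ln^{\sigma}c(\sigma)\ge 2(\sigma 2^{\sigma}\ln 2)^{\sigma}$, which is large enough (the $2^{\sigma^2}$ factor dominates $(\ln v)^{\sigma-1}\lesssim(2\sigma^{2})^{\sigma-1}$). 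This is a legitimate, completable route, and it has the virtue of making explicit where the size of $c(\sigma)$ from (\ref{030402}) is actually used; the price is the extra case analysis and the delicate endpoint comparison that the paper's choice of auxiliary function avoids entirely.

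Two small polish points. First, your monotonicity claim for $v\mapsto v-(\ln v)^{\sigma-1}$ on $[v_0,\infty)$ is, as stated, close to circular (its derivative condition $v\ge(\sigma-1)(\ln v)^{\sigma-2}$ is essentially the inequality being proved); it is cleaner to observe that $v\mapsto v/(\ln v)^{\sigma-1}$ is increasing for $\ln v>\sigma-1$ (which $v_0$ satisfies since $\ln v_0\ge\sigma^{2}\ln 2-O(\sigma)>\sigma-1$) and then check the single value $v_0/(\ln v_0)^{\sigma-1}\ge1$. Second, the range $1<\sigma\le2$ should be dispatched separately by the trivial remark $(\ln v)^{\sigma-1}\le\max\{1,\ln v\}\le v$, so that the $v_0$ computation is only needed for $\sigma>2$.
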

\begin{proof}
Let
\begin{equation*}
f_{p,\sigma,\delta}(x)=x^{p}e^{-\delta\ln^{\sigma}x},
\end{equation*}
and one has
\begin{equation}\label{030102}
\max_{x\geq 1}f_{p,\sigma,\delta}(x)\leq \exp\left\{p\left(\frac{p}{\delta\sigma}\right)^{\frac1{\sigma-1}}\right\}.
\end{equation}
In fact
\begin{equation*}
f'_{p,\sigma,\delta}(x)=px^{p-1}e^{-\delta\ln^{\sigma}x}
+x^{p}e^{-\delta\ln^{\sigma}x}\left(-\delta\sigma\ln^{\sigma-1}x\cdot x^{-1}\right)
\end{equation*}
and
\begin{equation*}
f'_{p,\sigma,\delta}(x)=0\Leftrightarrow x=\exp\left\{\left(\frac{p}{\delta\sigma}\right)^{\frac1{\sigma-1}}\right\}.
\end{equation*}
Hence
\begin{equation*}
\max_{x\geq 1}f_{p,\sigma,\delta}(x)\leq f\left(\exp\left\{\left(\frac{p}{\delta\sigma}\right)^{\frac1{\sigma-1}}\right\}\right)\leq \left(\exp\left\{\left(\frac{p}{\delta\sigma}\right)^{\frac1{\sigma-1}}\right\}\right)^p= \exp\left\{p\left(\frac{p}{\delta\sigma}\right)^{\frac1{\sigma-1}}\right\}.
\end{equation*}
Note that
$$\prod_{n\in\mathbb{Z}}\left(1+a_n^2\right)e^{-2\delta a_n\ln^{\sigma}\lfloor n\rfloor}=\prod_{n\in\mathbb{Z}:a_n\geq1}\left(1+a_n^2\right)e^{-2\delta a_n\ln^{\sigma}\lfloor n\rfloor}.$$
Then we can assume $a_n\geq 1\ \mbox{for}\ \forall\  n\in\mathbb{Z}$ in what follows. Thus one has
\begin{eqnarray}
\nonumber&&\left(1+a_n^p\right)e^{-2\delta a_n\ln^{\sigma}\lfloor n\rfloor}\\
\nonumber&\leq&\left( 2a_n\right)^pe^{-2\delta a_n\ln^{\sigma}\lfloor n\rfloor}\\
\nonumber&=&\frac{1}{\ln^{\sigma p}\lfloor n\rfloor}\cdot\left(2a_n\ln^{\sigma}\lfloor n\rfloor\right)^pe^{-2\delta a_n\ln^{\sigma}\lfloor n\rfloor}\\
\label{160516}&\leq&\exp\left\{p\left(\frac{p}{\delta\sigma}\right)^{\frac1{\sigma-1}}\right\},
\end{eqnarray}
where the last inequality is based on (\ref{030102}) and using the fact that
\begin{equation*}
\frac{1}{\ln^{\sigma p}\lfloor n\rfloor}\leq 1.
\end{equation*}
On the other hand, for $p=1,2$ one has
\begin{equation}\label{042806}
(1+a_n^p)e^{-2\delta a_n\ln^{\sigma} \lfloor n\rfloor}\leq 1
\end{equation}
when $\ln^{\sigma} \lfloor n\rfloor\geq \delta^{-1}$.

Therefore, we have
\begin{eqnarray*}
&&\prod_{n\in\mathbb{Z}}\left(1+a_n^p\right)e^{-2\delta a_n\ln^{\sigma}\lfloor n\rfloor}\\
&=&\left(\prod_{\ln^{\sigma} \lfloor n\rfloor< \delta^{-1}}\left(1+a_n^p\right)e^{-2\delta a_n\ln^{\sigma}\lfloor n\rfloor}\right)\left(\prod_{\ln^{\sigma} \lfloor n\rfloor\geq \delta^{-1}}\left(1+a_n^p\right)e^{-2\delta a_n\ln^{\sigma}\lfloor n\rfloor}\right)\\
&\leq&\prod_{\ln^{\sigma} \lfloor n\rfloor<\delta^{-1}}\left(1+a_n^p\right)e^{-2\delta a_n\ln^{\sigma}\lfloor n\rfloor}\qquad \mbox{(in view of (\ref{042806}))}\\
&\leq&\prod_{\ln^{\sigma} \lfloor n\rfloor<\delta^{-1}}\exp\left\{p\left(\frac{p}{\sigma\delta}\right)^{\frac1{\sigma-1}}\right\}\qquad \ \mbox{(in view of (\ref{160516}))}\\
&\leq&\left(\exp\left\{p\left(\frac{p}{\delta\sigma}\right)^{\frac1{\sigma-1}}\right\}\right)^{3\exp\left\{\left(\frac1\delta\right)^{\frac1\sigma}\right\}}\ \ \ \\
&=&\exp\left\{3p\left(\frac{p}{\delta\sigma}\right)^{\frac 1{\sigma-1}}\cdot\exp\left\{\left(\frac1\delta\right)^{\frac1\sigma}\right\}
\right\}\\
&\leq&\exp\left\{3p\left(\frac{p}{\delta}\right)^{\frac 1{\sigma-1}}\cdot\exp\left\{\left(\frac1\delta\right)^{\frac1\sigma}\right\}
\right\},
\end{eqnarray*}
where using $\sigma>1$.
\end{proof}

\section*{Acknowledgments}
 %The authors are very grateful to the anonymous referees for their knowledgeable reports, which helped us to improve our manuscript.
   The author is supported by NNSFC No. 12071053.
   %I am very grateful to Prof. Bin Liu for his kind support. I also would like to thank Shuang Wen for her help.

\bibliographystyle{alpha}%{abbrv}
%\bibliography{phase1}

\end{document}